\newtheorem{theorem}{Theorem}[section]
\newtheorem{lemma}[theorem]{Lemma}
\newtheorem{corollary}[theorem]{Corollary}
\theoremstyle{definition}
\newtheorem{definition}[theorem]{Definition}
\newtheorem{problem}[theorem]{Problem}
\newtheorem{remark}[theorem]{Remark}
\newtheorem{question}[theorem]{Question}
\newtheorem{fact}[theorem]{Fact}
\newtheorem{claim}{Claim}
\def\B{\mathcal{B}}
\def\N{\mathbb{N}}
\def\Z{\mathbb{Z}}
\def\Cyc{\mathrm{Cyc}}
\def\grp#1{\langle{#1}\rangle}
\def\ssgp{$\mathrm{SSGP}$}
\def\Z{\mathbb{Z}}
\def\P{\mathbb{P}}
\def\F{\mathbb{F}}
\def\ssgp{\mathrm{SSGP}}
\def\assgp{\mathrm{ASSGP}}
\def \Zet[#1]{\lceil #1 \rceil}  
\def\dash#1{\bar{#1}}
\def\wprod{*}
\def\fprod{\cdot}
\def\e#1{e}
\def\nice#1{\mathscr{#1}}
\def\lett{\mathrm{lett}}
\def\supp{\mathrm{supp}}
\def\wprod{*}
\def\fns{finite neighbourhood system}
\begin{document}
\title[SSGP topologies on free groups of infinite rank]
{SSGP topologies on free groups of infinite rank}

\author[D. Shakhmatov]{Dmitri Shakhmatov}
\address{Division of Mathematics, Physics and Earth Sciences\\
Graduate School of Science and Engineering\\
Ehime University, Matsuyama 790-8577, Japan}
\email{dmitri.shakhmatov@ehime-u.ac.jp}

\author[V. Ya\~nez]{V\'{\i}ctor Hugo Ya\~nez}
\address{Doctor's Course, Graduate School of Science and Engineering\\
Ehime University, 
Matsuyama 790-8577, Japan}
\email{victor\textunderscore yanez@comunidad.unam.mx}
\thanks{This paper was written as part of the second listed author's Doctor's Program at the Graduate School of Science and Engineering of Ehime University. The second listed author was partially supported by the Matsuyama Saibikai Grant.}

\begin{abstract}
We prove that every free group $G$ with infinitely many generators admits a
Hausdorff group topology $\mathscr{T}$ with the following property: for every $\mathscr{T}$-open neighbourhood $U$ of the identity of $G$,
each element 
$g\in G$ can be represented as a product
$g=g_1 g_2\dots g_k$ such that the cyclic group generated by each $g_i$ is contained in $U$.
In particular, $G$ admits a Hausdorff group topology with the small subgroup generating property of Gould.
This provides a positive answer to a question of Comfort and Gould
in the case of free groups with infinitely many generators.
The case of free groups with finitely many generators remains open.
\end{abstract}

\dedicatory{In memory of W. Wistar Comfort}

\maketitle

\section{introduction}

As usual, $\N$ denotes the set of natural numbers and we let $\N^+=\N\setminus\{0\}$. 

Let $G$ be a group.
For subsets $A,B$ of 
$G$, we let 
$$
AB=\{ab:a\in A, b\in B\}
\
\text{ and }
\
A^{-1}=\{a^{-1}:a\in A\}. 
$$
We say that a subset $A$ of $G$ is \emph{symmetric} if and only if $A = A^{-1}$.
For a subset $A$ of 
$G$, we
denote by $\grp{A}$ the smallest subgroup of $G$ containing $A$.
To simplify the notation, 
we write $\grp{x}$ instead of $\grp{\{x\}}$ for $x\in G$.

A topological group is \emph{minimally almost periodic} \cite{NW} if every continuous homomorphism from it to a compact group is trivial. We refer the reader to \cite{G, CG, DS_SSGP, DS_MinAP} for a historical overview of examples in this class of groups.
Answering a long-standing question of Comfort and Protasov,
Dikranjan and the first author gave a complete characterization of abelian groups which admit an introduction of a minimally almost periodic group topology \cite{DS_MinAP}.

Following the notation from \cite{DS_SSGP}, we define 
\begin{equation}
\label{eq:Cyc}
\Cyc(A)=\{x\in G:
\grp{x}\subseteq A\}
\ 
\text{ for every }
A\subseteq G.
\end{equation}

\begin{definition}
\label{SSGP:original}
A topological group $G$ has the {\em small subgroup generating property\/} (abbreviated to {\em SSGP\/})
if and only if 
$\grp{\Cyc(U)}$ is dense in $G$ for every neighbourhood $U$ of the identity of 
$G$.
We shall say that a topological group $G$ is {\em SSGP\/} if $G$ satisfies the small subgroup generating property.
\end{definition}

The small subgroup generating property was defined by Gould in \cite{G}. Examples of $\ssgp$ groups can be found in \cite{D-W, DS_SSGP, G, G-Zomega, Countable_SSGP, SY-assgp}. 

Comfort and Gould \cite{CG} asked the following question.

\begin{question}\label{ques1a} 
\cite[Question 5.2]{CG} What are the (abelian) groups which admit an $\ssgp$ group topology?
\end{question}

An ``almost complete'' characterization of \emph{abelian} topological groups which admit an $\ssgp$ group topology was 
obtained in \cite{DS_SSGP}, with the remaining case 
resolved in \cite{Countable_SSGP}. 

Question \ref{ques1a}  remains widely open for non-abelian groups.
If a set $X$ has at least two elements, then its symmetric group $S(X)$ does not admit an $\ssgp$ group topology \cite[Example 5.4(c)]{DS_SSGP}. 
In this paper we essentially resolve Question \ref{ques1a} for free groups.

\section{Results}

The $\ssgp$ property was studied as a generalization of a stronger property utilized by Dierolf and Warken in \cite{D-W} as a means to 
prove that the Hartman-Mycielski group \cite{HM} is 
minimally almost periodic. 
In the following definition we 
propose
a name for this property, as well as state it using the same terminology as in Definition \ref{SSGP:original}.
\begin{definition}
\label{def:ASSGP}
A topological group $G$ has the {\em algebraic small subgroup generating property\/} (abbreviated to {\em ASSGP\/})
if and only if the equality 
$G = \grp{\Cyc(U)}$ holds for every neighbourhood $U$ of the identity of 
$G$.
We shall say that a topological group $G$ is {\em ASSGP\/} if $G$ satisfies the algebraic small subgroup generating property.
\end{definition}

It is clear from Definitions \ref{SSGP:original} and \ref{def:ASSGP} that ASSGP implies SSGP.

The main goal of this paper is to prove the following two theorems:

\begin{theorem}
\label{countable:theorem}
The free group $F(X)$ over 
a countably infinite set 
$X$ admits a metric $\assgp$ group topology.
 \end{theorem}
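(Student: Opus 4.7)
The plan is to build an explicit countable decreasing neighbourhood base $\{U_n : n \in \N\}$ at the identity of $F(X)$ that determines a Hausdorff (hence metric) group topology with ASSGP. Enumerate $X = \{x_k : k \in \N^+\}$. The underlying algebraic idea is the trivial identity, valid in $F(X)$ whenever $k \neq m$,
\[
x_k = (x_k x_m^{-1}) \cdot x_m,
\]
which expresses $x_k$ as a product of two infinite-order elements; the cyclic subgroups $\grp{x_k x_m^{-1}}$ and $\grp{x_m}$ will then be declared ``small'' by putting them inside the prescribed neighbourhoods.

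Concretely, I would fix a family of pairwise disjoint infinite ``index budgets'' $M_0 , M_1 , M_2, \ldots \subseteq \N^+$ with $\min M_n > n$, growing rapidly. For each $n$ and each $k \leq n$, pick an auxiliary index $m_{k,n} \in M_n$ giving the decomposition $x_k = (x_k x_{m_{k,n}}^{-1}) \cdot x_{m_{k,n}}$, and let $W_n$ be the resulting finite set of atoms $\{x_k x_{m_{k,n}}^{-1} , x_{m_{k,n}} : 1 \le k \le n\}$. Define
\[
A_n \;=\; \bigcup_{w \in W_n}\bigcup_{g \in F(X)} g\, \grp{w}\, g^{-1},
\]
a symmetric, conjugation-invariant subset of $F(X)$, and take $U_n$ to be a suitable symmetric enlargement of $A_n$ arranged so that $U_{n+1} \cdot U_{n+1} \cdot U_{n+1} \subseteq U_n$ for all $n$. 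The enlargement and the rate of growth of the $M_n$ must be coordinated so that this product-absorption condition is satisfied while keeping $U_n$ disjoint from $\{x_1,\ldots,x_n\}$.

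Granting this construction, metrizability is automatic from countability of the base, and ASSGP follows immediately: for each $n$, every generator $x_k$ with $k \le n$ decomposes as a product of two elements of $W_n \subseteq \Cyc(U_n)$, so $X \subseteq \grp{\Cyc(U_n)}$ and hence $F(X) = \grp{\Cyc(U_n)}$. The main obstacle, and the heart of the technical work, is establishing Hausdorffness $\bigcap_n U_n = \{e\}$. For a nontrivial reduced word $g \in F(X)$ with support contained in $\{x_1,\ldots,x_N\}$, one must show that for some $n \ge N$, $g$ cannot be obtained as a short product of conjugated powers of atoms from $W_n$. Each atom in $W_n$ is a reduced word of length one or two whose support contains an index from $M_n$, strictly larger than $n \ge N$; because the budgets $M_n$ are disjoint from $\{1,\ldots,N\}$ for $n \ge N$, these ``late'' generators cannot all cancel in any reduced product representing $g$. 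Turning this intuition into a precise inductive verification that is compatible with the enlargement of $A_n$ into $U_n$ is the delicate Nielsen-style combinatorial core of the argument, and where I expect the paper's real technical machinery (selective admissibility, finite neighbourhood systems, etc.) to enter.
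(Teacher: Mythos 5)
Your sketch is not a proof; it is a plan with the hard part deferred, and you acknowledge this yourself at the end. The genuine gap is exactly where you locate it: you never verify that $\bigcap_n U_n = \{e\}$, and you never specify the ``suitable symmetric enlargement'' that turns $A_n$ into $U_n$ while satisfying $U_{n+1}\cdot U_{n+1}\cdot U_{n+1}\subseteq U_n$. These two demands pull against each other, and reconciling them is the whole content of the theorem, not a loose end. Moreover, your choice of $A_n=\bigcup_{w\in W_n}\bigcup_{g\in F(X)} g\grp{w}g^{-1}$ makes each $A_n$ fully conjugation-invariant, which is strictly stronger than what a group topology requires (one only needs: for every $g$ and $n$ there is $k$ with $g\,U_k\,g^{-1}\subseteq U_n$), and this extra invariance inflates the $U_n$ and makes the Hausdorffness claim even harder to believe, let alone prove. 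You cannot rely on a conjugation-invariant $U_n$ staying disjoint from a fixed nontrivial word when you must also close under products to degree 3: a priori the intersection $\bigcap_n U_n$ could well be a nontrivial normal subgroup.

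The paper avoids this head-on collision by not defining the $U_n$ at once. It works with \emph{finite} neighbourhood systems on free groups $F(X^p)$ over finite subsets $X^p\subseteq X$, partially ordered by the extension relation of Definition~\ref{SPSb}, whose key clause is the trace condition $V_i\cap F(X)=U_i$. The neighbourhoods $U_n$ are then defined (\eqref{def:U_i}) as increasing unions along a filter meeting countably many dense sets. Hausdorffness is delivered by the dense sets $C_g$ of Lemma~\ref{dense:sets}(iii), which force a given $g\ne e$ outside some $U^q_{n^q}$, and the trace condition guarantees this exclusion survives all later extensions. The ASSGP property comes from the dense sets $D_g$ and Lemma~\ref{main:lemma:assgp}, whose decomposition is $g=y_k^{-1}\cdots y_1^{-1}g_0$ with $g_0=(y_1\cdots y_k)g$ and $k$ large relative to the current system, rather than your two-factor identity $x_k=(x_kx_m^{-1})x_m$. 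The combinatorial core you correctly anticipate is Lemmas~\ref{trivial:sandwich}--\ref{lemma:7.2}, which show that canonical representations survive erasure of the newly introduced cyclic groups, i.e., $V_i\cap F(X)\subseteq U_i$ (Claim~\ref{Vi:claim}). If you want to salvage a direct construction along your lines, you would need to replace your vague ``suitable enlargement'' by an explicit recursion and prove a support-counting lemma analogous to Lemma~\ref{extension:set:c} together with an erasure lemma analogous to Lemma~\ref{lemma:7.2}; without those, the proposal is a heuristic, not a proof.
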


\begin{theorem}
\label{the:theorem}
Every free group 
with infinitely many generators
admits 
an $\assgp$ group topology.
\end{theorem}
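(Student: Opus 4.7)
The strategy is to bootstrap from the countable case. Since $X$ is infinite, partition $X = \bigsqcup_{\alpha \in I} X_\alpha$ into countably infinite pieces; then $F(X)$ is canonically isomorphic to the free product $\ast_{\alpha \in I} F(X_\alpha)$. By Theorem \ref{countable:theorem}, each factor $F(X_\alpha)$ admits an $\assgp$ metric group topology $\tau_\alpha$ with invariant metric $d_\alpha$.

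I then aim to construct a Hausdorff group topology $\mathscr{T}$ on $F(X)$ in which each $(F(X_\alpha), \tau_\alpha)$ embeds as a topological subgroup. A natural concrete approach is a Graev-style prenorm
$$ \|g\| = \inf \left\{ \sum_{i=1}^n d_{\alpha_i}(g_i, e) : g = g_1 g_2 \cdots g_n,\ g_i \in F(X_{\alpha_i}) \right\}, $$
suitably symmetrized over conjugates to ensure two-sided invariance; one then checks that the induced pseudometric defines a group topology $\mathscr{T}$ that restricts to $\tau_\alpha$ on each $F(X_\alpha)$. The restriction property uses the retraction $r_\alpha \colon F(X) \to F(X_\alpha)$ (identity on $F(X_\alpha)$, trivial on $X \setminus X_\alpha$), which is $1$-Lipschitz with respect to $\|\cdot\|$ and $d_\alpha$, forcing $\|g\| = d_\alpha(g, e)$ for $g \in F(X_\alpha)$. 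The main obstacle is proving that $\mathscr{T}$ is Hausdorff: the retractions $r_\alpha$ alone do not suffice, since mixed commutators like $[a,b]$ with $a \in F(X_\alpha)$, $b \in F(X_\beta)$, $\alpha \ne \beta$, are annihilated by every $r_\alpha$. To overcome this, one augments the construction by also requiring continuity of the retractions $F(X) \to F(\bigcup_{\alpha \in J} X_\alpha)$ for every finite $J \subseteq I$ (where each countable target is endowed with an $\assgp$ metric topology from Theorem \ref{countable:theorem}), which jointly separate all nonidentity elements, since any $g \ne e$ involves only finitely many generators of $X$ and is detected by the corresponding finite-$J$ retraction.

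Granted such a topology $\mathscr{T}$, the $\assgp$ conclusion is then direct. Let $U$ be a $\mathscr{T}$-open neighborhood of $e$ and $g \in F(X)$. Write $g = g_1 g_2 \cdots g_n$ in reduced free-product form with $g_i \in F(X_{\alpha_i})$. Because $(F(X_{\alpha_i}), \tau_{\alpha_i})$ is a topological subgroup of $(F(X), \mathscr{T})$, the set $U \cap F(X_{\alpha_i})$ is a $\tau_{\alpha_i}$-open neighborhood of $e$ in $F(X_{\alpha_i})$. The $\assgp$ property of $\tau_{\alpha_i}$ yields a decomposition $g_i = h_{i,1} h_{i,2} \cdots h_{i,k_i}$ with $\grp{h_{i,j}} \subseteq U \cap F(X_{\alpha_i}) \subseteq U$ (noting that $\grp{h_{i,j}}$ is computed identically in $F(X_{\alpha_i})$ and in $F(X)$). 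Concatenating these decompositions expresses $g$ as a product of elements generating cyclic subgroups contained in $U$, so $g \in \grp{\Cyc(U)}$, proving $F(X) = \grp{\Cyc(U)}$ as required.
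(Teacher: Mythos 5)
You take a genuinely different route from the paper. The paper reduces to the countable case by realizing the free group of rank $\kappa$ as a subgroup $G$ of the product $H^\E$ (where $H$ is the countable ASSGP group from Theorem~\ref{countable:theorem} and $\E$ is a $\kappa$-sized index set), with the subspace topology --- so Hausdorffness is automatic --- and then verifies the ASSGP property by a transfinite bookkeeping construction of the generators $x_{\alpha,n}$. Your free-product decomposition $F(X)\cong\ast_{\alpha} F(X_\alpha)$, together with the concluding ASSGP argument (decomposing each free-product syllable $g_i$ inside $F(X_{\alpha_i})$ and noting $\grp{h_{i,j}}$ is the same whether computed in the factor or in $F(X)$), is clean and would give a shorter deduction --- \emph{provided} the required topology $\mathscr{T}$ exists.

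The gap is in producing $\mathscr{T}$. Your Graev-style prenorm gives a filter base of symmetric sets closed under squaring, and the retraction $r_\alpha$ shows $\|\cdot\|$ restricts to $d_\alpha$ on $F(X_\alpha)$, but you do not verify the conjugation-continuity axiom (for every $g$ and $\varepsilon>0$ there is $\delta>0$ with $g\{h:\|h\|<\delta\}g^{-1}\subseteq\{h:\|h\|<\varepsilon\}$); the fix you gesture at, ``symmetrizing over conjugates,'' is exactly what threatens the $1$-Lipschitz retraction argument underlying the restriction property, and you supply no details. Your proposed repair of Hausdorffness --- requiring continuity of the retractions onto $F(\bigcup_{\alpha\in J}X_\alpha)$ for finite $J$ --- is circular: those countable subgroups would themselves need to be pre-equipped with topologies compatible with the restriction property, which is the same problem one scale down, and imposing continuity of new maps can only refine $\mathscr{T}$ and shrink neighbourhoods, which makes the ASSGP verification harder, not easier. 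What you are in effect trying to rebuild is the theory of Graev free products of topological groups: that the free topological product of Hausdorff groups exists, is Hausdorff, and contains the factors as topological subgroups. That is a genuine theorem (Graev for two factors, extended by Katz and by Morris), not an elementary check, and your proof is sound modulo an appeal to it; but as written, the Hausdorffness you correctly single out as the main obstacle is not established.
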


The proofs of these two theorems are postponed until Sections
\ref{Sec:7} and \ref{sec:8}, respectively.

The paper is organized as follows. Basic facts about free groups are recalled in Section~\ref{free:section}. In Section~\ref{sec:nghbd} we introduce a notion of a \fns\ on a free group; this is basically a finite initial segment of a countable family of future neighbourhoods in some group topology on this group. 
In Section~\ref{sec:extensions}, a notion of an extension of a \fns\ is defined; this is a \fns\ on a bigger free group whose traces of new neighbourhoods to the smaller free group coincide with the original neighborhoods.
In Section~\ref{sec:enrichments}, we devise a technique for extending a \fns\ to a \fns\ on a bigger group, and provide canonical representations of elements of ``extended neighbourhoods'' by elements from ``smaller neighbourhoods''
and a fixed set which can be viewed as a base for such an extension.
Section~\ref{sec:aux} contains three auxiliary lemmas, the main of which is Lemma~\ref{main:lemma:assgp} responsible for the $\ssgp$ property of the topology under construction.
In Section~\ref{Sec:6}, we introduce a partially ordered set which is used in the proof of Theorem~\ref{countable:theorem} (the countable case); the proof itself is carried out in Section~\ref{Sec:7}. Theorem~\ref{the:theorem} (the general case) is proved in Section~\ref{sec:8}. Its proof simply provides a reduction of general case to the countable case.
Finally, open questions are listed in Section~\ref{sec:questions}.

In the proof of Theorem \ref{countable:theorem}, we use a partially ordered set to produce a topology on the free group with a countably infinite set of generators. This technique was used by the authors recently in \cite{Axioms} and \cite{Countable_SSGP}.

Theorems \ref{countable:theorem} and \ref{the:theorem}
 were announced by the authors in 
\cite{SY-kyoto1}.

\section{The 
free group $F(X)$ over a set $X$}
\label{free:section}

\begin{definition}
\label{def:W(X)}
Let $X$ be a 
set. 
\begin{itemize}
\item[(i)]
Let $W_0 = \{ \emptyset\}$. For $n\in\N^+$, let
$$
W_n = \{x_1^{\varepsilon_1} \dots x_n^{\varepsilon_n}: x_i \in X, \varepsilon_i \in \{-1,1\} \text{ for all } i = 1, \dots, n \}.
$$
\item[(ii)] For $w=x_1^{\varepsilon_1} \dots x_n^{\varepsilon_n}\in W_n(X)$
and $v=y_1^{\delta_1} \dots y_m^{\delta_m}\in W_m(X)$, we let 
$w=v$ if and only if $n=m$ and $x_i=y_i$, $\varepsilon_i=\delta_i$ for all $i = 1,\dots,n$.
\item[(iii)]
Elements of the set 
\begin{equation*}
W(X) = \bigcup_{n \in \N}W_n(X)
\end{equation*}
are called \emph{words in alphabet $X$}.
According to (ii), this union consists of pairwise disjoint sets, so
for every word $w\in W(X)$, there exists a unique $n\in\N$ such that $w\in W_n(X)$; this $n$ is called the {\em length\/} of $w$ and denoted by $l(w)$.
\item[(iv)]
Given a word $w = x_1^{\varepsilon_1} \dots x_n^{\varepsilon_n} \in W_n(X)$, a {\em sub-word of $w$\/} is a word $w' = x_k^{\varepsilon_k} \dots x_l^{\varepsilon_l}$ for some $k,l \in \N$ such that $1 \le k \le l \le n$. 
The word $w'$ is said to be an {\em initial sub-word\/} of $w$
when $k=1$ and a {\em final sub-word\/} of $w$
when $l=n$.
\end{itemize}
\end{definition}

The \emph{empty word\/} $\emptyset$ will be denoted also by $\e{W}$. Clearly, we have that $l(\e{W})=0$.

We can define an operation on the set $W(X)$ as follows:

\begin{definition} \label{def:conc}
Let $X$ be a 
set. \begin{itemize}
\item[(i)] For $w=x_1^{\varepsilon_1} \dots x_n^{\varepsilon_n}\in W_n(X)$
and $v=y_1^{\delta_1} \dots y_m^{\delta_m}\in W_m(X)$, 
the word $x_1^{\varepsilon_1} \dots x_n^{\varepsilon_n} y_1^{\delta_1} \dots y_m^{\delta_m}\in W_{n+m}(X)$ is called the (result of) {\em concatenation of $w$ and $v$\/}; we denote this word by $w\wprod v$. We also let $\e{W}\wprod w=w\wprod\e{W}=w$ for every word $w\in W(X)$.
\item[(ii)] For $w=x_1^{\varepsilon_1} \dots x_n^{\varepsilon_n}\in W_n(X)$,  the word $w^{-1} = x_n^{- \varepsilon_n} \dots x_1^{- \varepsilon_1}\in W_n(X)$
is called the {\em inverse of $w$\/}.
We also let $\e{W}^{-1}=\e{W}$.
\end{itemize}
\end{definition}

The set $W(X)$ equipped with the binary operation $\wprod$ is a semigroup with the identity $\e{W}$.

The proof of the following lemma is straightforward.

\begin{lemma}
\label{initial:final:subwords}
For every initial sub-word $w'$ of a word $w\in W(X)$, there exists a unique final sub-word $w''$ of $w$ such that $w=w'\wprod w''$. 
Conversely,
for every final sub-word $w''$ of $w$, there exists an initial sub-word $w'$ of $w$ such that $w=w'\wprod w''$. 
\end{lemma}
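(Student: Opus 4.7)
The plan is essentially to unpack the definitions and track indices carefully; the statement follows immediately from the uniqueness of word representations guaranteed by Definition~\ref{def:W(X)}(ii). There is no real obstacle here beyond bookkeeping with the edge cases $l=n$ and $k=1$, where the complementary piece is the empty word.

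To start, fix $w = x_1^{\varepsilon_1} \dots x_n^{\varepsilon_n} \in W_n(X)$. For the first assertion, let $w' = x_1^{\varepsilon_1} \dots x_l^{\varepsilon_l}$ be an arbitrary initial sub-word of $w$, so $1 \le l \le n$. I would define $w''$ by the explicit formula
\[
w'' = x_{l+1}^{\varepsilon_{l+1}} \dots x_n^{\varepsilon_n}
\]
when $l < n$, and $w'' = \e{W}$ when $l = n$. In either case $w''$ is a final sub-word of $w$ (or the empty word, which is absorbed by $\wprod$), and the definition of concatenation in Definition~\ref{def:conc}(i) gives $w = w' \wprod w''$.

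For uniqueness, suppose $\tilde{w}''$ is any final sub-word of $w$ with $w = w' \wprod \tilde{w}''$. Both $w' \wprod w''$ and $w' \wprod \tilde{w}''$ equal $w$, and since concatenation simply juxtaposes letters, the final $n - l$ letters of $w$ are forced to agree with both $w''$ and $\tilde{w}''$; the equality criterion in Definition~\ref{def:W(X)}(ii) then yields $w'' = \tilde{w}''$.

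The converse direction is entirely symmetric: given a final sub-word $w'' = x_k^{\varepsilon_k} \dots x_n^{\varepsilon_n}$ with $1 \le k \le n$, set $w' = x_1^{\varepsilon_1} \dots x_{k-1}^{\varepsilon_{k-1}}$ (taking $w' = \e{W}$ if $k = 1$). Then $w'$ is an initial sub-word of $w$ and, by Definition~\ref{def:conc}(i), $w = w' \wprod w''$. This completes the sketch; no heavier machinery is needed.
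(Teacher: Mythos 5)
Your proof is correct and is exactly the routine index-bookkeeping argument the paper has in mind (the paper states the lemma without proof, calling it ``straightforward''). You correctly identify the one edge case worth flagging — that when $w' = w$ (resp.\ $k=1$) the complementary piece is $\e{W}$, which is not literally a sub-word under Definition~\ref{def:W(X)}(iv) but is absorbed by $\wprod$ — a small imprecision in the lemma's statement that the paper itself glosses over.
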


\begin{definition} \label{def:f(x)}
\label{def:F(X)}
Let $X$ be a 
set. A word $w=x_1^{\varepsilon_1} \dots x_n^{\varepsilon_n}\in W(X)$ is  \emph{irreducible\/} provided that, for every $i \in \{1, \dots, n-1\}$, either 
$x_i \neq x_{i+1}$ or ${\varepsilon_i} = {\varepsilon_{i+1}}$. Observe that the empty word $\e{W}$ is considered irreducible too. We shall denote by $F(X)$ the set of all irreducible words $w\in W(X)$. 
\end{definition}

\begin{lemma}
\label{cancellation:of:two:words}
For every pair of irreducible words $v,w\in F(X)$, there exist unique $v',v'',w',w''\in W(X)$
such that
$v=v'\wprod v''$,  $w=w' \wprod w''$, $v''$ and $w'$ are inverses of each other and
$v'\wprod w''$ is an irreducible word.
\end{lemma}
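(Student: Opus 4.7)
The plan is to prove the existence by explicit construction — finding the maximal possible cancellation between the tail of $v$ and the head of $w$ — and then derive uniqueness by showing that any two candidate decompositions must agree, exploiting irreducibility of $v'\wprod w''$.

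For existence, I write $v=x_1^{\varepsilon_1}\dots x_n^{\varepsilon_n}$ and $w=y_1^{\delta_1}\dots y_m^{\delta_m}$, and let $k$ be the largest integer with $0\le k\le\min(n,m)$ satisfying $x_{n-i+1}=y_i$ and $\varepsilon_{n-i+1}=-\delta_i$ for all $i=1,\dots,k$. I then set
$v'=x_1^{\varepsilon_1}\dots x_{n-k}^{\varepsilon_{n-k}}$, $v''=x_{n-k+1}^{\varepsilon_{n-k+1}}\dots x_n^{\varepsilon_n}$, $w'=y_1^{\delta_1}\dots y_k^{\delta_k}$, and $w''=y_{k+1}^{\delta_{k+1}}\dots y_m^{\delta_m}$. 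By construction $v=v'\wprod v''$, $w=w'\wprod w''$, and Definition~\ref{def:conc}(ii) gives $v''=(w')^{-1}$. To check that $v'\wprod w''$ is irreducible, note that since $v$ and $w$ are irreducible, the only place irreducibility could fail in $v'\wprod w''$ is at the junction between $v'$ and $w''$ (when both are nonempty); but if $k<\min(n,m)$, the maximality of $k$ means the pair $(x_{n-k}^{\varepsilon_{n-k}},y_{k+1}^{\delta_{k+1}})$ fails the cancellation condition, i.e.\ either $x_{n-k}\neq y_{k+1}$ or $\varepsilon_{n-k}=\delta_{k+1}$, which is exactly the irreducibility condition at that junction. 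If instead $k=\min(n,m)$, then one of $v'$, $w''$ is empty and $v'\wprod w''$ is a sub-word of $v$ or $w$, hence irreducible by Lemma~\ref{initial:final:subwords} applied to the obvious observation that sub-words of irreducible words are irreducible.

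For uniqueness, suppose $(v_1',v_1'',w_1',w_1'')$ and $(v_2',v_2'',w_2',w_2'')$ are two quadruples satisfying the conclusion. Since $v_j''=(w_j')^{-1}$, we have $l(v_j'')=l(w_j')$, so one of the two decompositions (say the second) has $l(v_2'')\le l(v_1'')$. Then $v_2'=v_1'\wprod u$ and $v_1''=u\wprod v_2''$ for some $u\in W(X)$, and taking inverses, $w_1'=w_2'\wprod u^{-1}$ and $w_2''=u^{-1}\wprod w_1''$. Substituting yields
$$
v_2'\wprod w_2''=v_1'\wprod u\wprod u^{-1}\wprod w_1''.
$$
If $u\neq\e{W}$, the two central letters of $u\wprod u^{-1}$ are mutually inverse, making $v_2'\wprod w_2''$ reducible, contradicting the hypothesis. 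Hence $u=\e{W}$ and the two decompositions coincide.

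The main obstacle is really only bookkeeping — making sure the boundary cases (when $k=0$, $k=n$, $k=m$, $v'$ empty, $w''$ empty, etc.) are all covered by the same uniform argument and that the irreducibility check at the junction is handled correctly. The heart of the argument, both for existence and uniqueness, is the single observation that in a free monoid a nonempty word concatenated with its inverse always produces an adjacent cancelling pair, so any nontrivial ambiguity in the decomposition is detected by irreducibility of $v'\wprod w''$.
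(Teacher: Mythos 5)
Your existence argument is the paper's own: take the final sub-word of $v$ of maximal length whose inverse is an initial sub-word of $w$, and use maximality of that length to see that the junction between $v'$ and $w''$ cannot be a cancelling pair, so $v'\wprod w''$ is irreducible. Where you go beyond the paper is the uniqueness half — the paper states uniqueness but its short proof only establishes existence — and your argument (any two valid decompositions differ by a bridging word $u$ with $v_2' = v_1'\wprod u$ and $w_2'' = u^{-1}\wprod w_1''$, so a nonempty $u$ would create an adjacent cancelling pair inside $u\wprod u^{-1}$, contradicting irreducibility of $v_2'\wprod w_2''$) is correct and supplies that missing piece.
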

\begin{proof}
Let $v''$ be the final sub-word of $v$ of maximal length $l(v'')$ such that its inverse $w'=(v'')^{-1}$ is an initial sub-word of $w$.
Use Lemma \ref{initial:final:subwords} to find a unique initial sub-word
$v'$ of $v$ and a unique final sub-word $w''$ of $w$ such that
$v=v'\wprod v''$ and $w=w'\wprod w''$.
Finally, note that the word $v'\wprod w''$ is irreducible by the maximality of $v''$ and Definition 
\ref{def:f(x)}.
\end{proof}

\begin{definition}
\label{def:product:in:F(X)}
For a 
set $X$, we define a binary operation $\fprod$ on the set $F(X)$ as follows.
For $v,w\in F(X)$, let $v',v'',w',w''\in W(X)$ be the unique words 
as in the conclusion of Lemma
\ref{cancellation:of:two:words}.
Then we define $v\fprod w= v' \wprod w''$.
\end{definition}

The following fact is well-known.

\begin{fact}
The $\cdot$ operation on $F(X)$ is associative.
\end{fact}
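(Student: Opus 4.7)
The plan is to use van der Waerden's permutation trick, which circumvents the case analysis that a direct combinatorial approach would require. For each $x \in X$ and $\varepsilon \in \{-1,1\}$, I would define a map $\lambda_{x^\varepsilon} \colon F(X) \to F(X)$ as follows: given $w \in F(X)$, if $w$ admits a decomposition $w = x^{-\varepsilon} \wprod w'$ with $w' \in F(X)$, set $\lambda_{x^\varepsilon}(w) = w'$; otherwise, set $\lambda_{x^\varepsilon}(w) = x^\varepsilon \wprod w$, which is then automatically irreducible by Definition \ref{def:f(x)}. A short case check shows that $\lambda_{x^\varepsilon} \circ \lambda_{x^{-\varepsilon}} = \lambda_{x^{-\varepsilon}} \circ \lambda_{x^\varepsilon} = \mathrm{id}_{F(X)}$, so each $\lambda_{x^\varepsilon}$ is a permutation of $F(X)$.

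Next I would extend this assignment to a map $\Lambda \colon W(X) \to \mathrm{Sym}(F(X))$ by setting $\Lambda(\e{W}) = \mathrm{id}_{F(X)}$ and $\Lambda(x_1^{\varepsilon_1} \wprod \cdots \wprod x_n^{\varepsilon_n}) = \lambda_{x_1^{\varepsilon_1}} \circ \cdots \circ \lambda_{x_n^{\varepsilon_n}}$. By construction and the associativity of composition of functions, $\Lambda(u \wprod v) = \Lambda(u) \circ \Lambda(v)$ for all $u, v \in W(X)$. Two further properties need to be established. First, $\Lambda(w)(\e{W}) = w$ for every irreducible $w \in F(X)$; this follows by induction on $l(w)$, since for irreducible $w = x^\varepsilon \wprod w''$ the second case of the definition applies and yields $\lambda_{x^\varepsilon}(\Lambda(w'')(\e{W})) = \lambda_{x^\varepsilon}(w'') = x^\varepsilon \wprod w''$. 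Second, $\Lambda(v \fprod w) = \Lambda(v) \circ \Lambda(w)$ for all $v, w \in F(X)$; decomposing $v = v' \wprod v''$ and $w = w' \wprod w''$ as in Lemma \ref{cancellation:of:two:words} with $w' = (v'')^{-1}$, one shows by induction on $l(v'')$ from $\lambda_{x^\varepsilon} \circ \lambda_{x^{-\varepsilon}} = \mathrm{id}$ that $\Lambda(v'' \wprod (v'')^{-1}) = \mathrm{id}$, whence $\Lambda(v) \circ \Lambda(w) = \Lambda(v \wprod w) = \Lambda(v') \circ \Lambda(w'') = \Lambda(v' \wprod w'') = \Lambda(v \fprod w)$.

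With both properties in hand, associativity is immediate. For irreducible $u, v, w$, applying $\Lambda$ to $(u \fprod v) \fprod w$ and to $u \fprod (v \fprod w)$ yields $\Lambda(u) \circ \Lambda(v) \circ \Lambda(w)$ in both cases; evaluating at $\e{W}$ and using the first property gives $(u \fprod v) \fprod w = u \fprod (v \fprod w)$. The main subtlety is the verification of the second key property; once granted, associativity is inherited for free from the associativity of function composition, entirely sidestepping the awkward case distinctions (on whether the maximal cancellation regions used to form $u \fprod v$ and $v \fprod w$ overlap inside $v$) that a direct combinatorial argument would require.
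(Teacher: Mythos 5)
Your proof is correct. The paper does not prove this Fact: it simply declares it ``well-known'' and supplies no argument, so there is nothing in the paper's text to compare against. The route you take is the classical van der Waerden permutation trick: represent each $x^\varepsilon$ as a bijection $\lambda_{x^\varepsilon}$ of $F(X)$ (left-multiply-and-reduce), extend multiplicatively to a monoid homomorphism $\Lambda\colon W(X)\to\mathrm{Sym}(F(X))$, and establish the two key facts that $\Lambda(w)(\e{W})=w$ for every reduced $w$ and that $\Lambda(v\fprod w)=\Lambda(v)\circ\Lambda(w)$ for all $v,w\in F(X)$; associativity of $\fprod$ then falls out of the associativity of function composition. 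Your verification of the pivotal step $\Lambda\bigl(v''\wprod(v'')^{-1}\bigr)=\mathrm{id}$ by peeling matching pairs from the middle and invoking $\lambda_{x^\varepsilon}\circ\lambda_{x^{-\varepsilon}}=\mathrm{id}$ is sound, and it is exactly this mechanism that converts $\Lambda(v\wprod w)$ into $\Lambda(v'\wprod w'')=\Lambda(v\fprod w)$ without tracking cancellation combinatorially. A direct proof would instead need a case analysis on how the cancellation region used to form $u\fprod v$ and the one used to form $v\fprod w$ interact inside $v$; your argument sidesteps that entirely, which is the whole point of the trick.
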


From this fact, Lemma \ref{cancellation:of:two:words} and observing that $e$ behaves as a neutral element, we obtain that $F(X)$ equipped with the operation $\cdot$ is a group:

\begin{lemma}
\label{lem:F(X)}
For every 
set $X$, the set $F(X)$ equipped with the binary operation $\fprod$ is a group with the identity 
$\e{F}$.
The inverse of an element $w\in F(X)$ in $F(X)$ coincides
with the (irreducible) word $w^{-1}$ defined in item (ii) of Definition \ref{def:conc}.
\end{lemma}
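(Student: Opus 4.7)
The plan is to verify the three group axioms for $(F(X), \fprod)$, since associativity is granted by the preceding fact. What remains is to identify the identity element and to exhibit, for each $w \in F(X)$, an inverse, which we claim is precisely the word $w^{-1}$ from Definition~\ref{def:conc}(ii).

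First I would verify that $\e{F} = \e{W}$ acts as a two-sided identity for $\fprod$. Given any $w \in F(X)$, I apply Lemma~\ref{cancellation:of:two:words} to the pair $(\e{F}, w)$. The only candidate for the maximal final sub-word of $\e{F}$ whose inverse is an initial sub-word of $w$ is the empty word itself, so in the notation of the lemma one gets $v' = v'' = \e{F}$, $w' = \e{F}$, and $w'' = w$. By Definition~\ref{def:product:in:F(X)}, $\e{F} \fprod w = \e{F} \wprod w = w$. The symmetric argument, applied to $(w, \e{F})$, yields $w \fprod \e{F} = w$.

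Next I would show that $w^{-1}$, as defined in Definition~\ref{def:conc}(ii), is an element of $F(X)$ and serves as an inverse of $w$ under $\fprod$. Irreducibility of $w^{-1}$ follows immediately from Definition~\ref{def:f(x)}: if $w = x_1^{\varepsilon_1} \dots x_n^{\varepsilon_n}$ is irreducible and two adjacent letters $x_{i+1}^{-\varepsilon_{i+1}} x_i^{-\varepsilon_i}$ of $w^{-1}$ violated irreducibility, then the pair $x_i^{\varepsilon_i} x_{i+1}^{\varepsilon_{i+1}}$ in $w$ would too. For the inverse identity, I again invoke Lemma~\ref{cancellation:of:two:words}, this time on the pair $(w, w^{-1})$. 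The whole word $w$ is a final sub-word of itself whose inverse $w^{-1}$ is an initial sub-word of $w^{-1}$; moreover, by the uniqueness clause of the lemma, this is the maximal such cancellation (the alternative pieces $v'$ and $w''$ are forced to be empty). Hence $v' = w'' = \e{F}$, and Definition~\ref{def:product:in:F(X)} gives $w \fprod w^{-1} = \e{F} \wprod \e{F} = \e{F}$. Running the same argument on $(w^{-1}, w)$ gives $w^{-1} \fprod w = \e{F}$.

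There is no real obstacle here: associativity is granted, and the only subtle point is the correct invocation of Lemma~\ref{cancellation:of:two:words}, which requires checking that the particular choices of $v', v'', w', w''$ I propose really are the unique ones produced by the lemma. This uniqueness is guaranteed by the lemma itself, so it suffices in each case to exhibit one decomposition satisfying all the stated conditions (irreducibility of $v' \wprod w''$ and the inverse relation between $v''$ and $w'$) and appeal to uniqueness; in both verifications above, the decompositions I wrote down trivially meet these conditions.
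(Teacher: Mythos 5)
Your proof is correct and follows exactly the route the paper implicitly takes: the paper offers no written proof here, stating only that the conclusion follows from the associativity Fact, Lemma~\ref{cancellation:of:two:words}, and the observation that $\e{F}$ is neutral, and you have simply filled in these details. Your careful appeal to the uniqueness clause of Lemma~\ref{cancellation:of:two:words} to pin down the decompositions, and your check that $w^{-1}$ is itself irreducible, are exactly the right way to make the paper's one-line remark rigorous.
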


\begin{definition}
The group $F(X)$ from Lemma \ref{lem:F(X)} is called the {\em free group over $X$\/}.
\end{definition}

Let us observe the following fundamental property of the free group:

\begin{lemma}
Let $X$ be a 
set and $G$ be any group. Every mapping $f: X \to G$ has an extension to an homomorphism $\hat{f}: F(X) \to G$ such that $\hat{f} \restriction_X = f$.
\end{lemma}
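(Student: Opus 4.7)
The plan is to define $\hat{f}$ letter-by-letter on reduced words and then verify multiplicativity using Lemma \ref{cancellation:of:two:words}. Concretely, for $w = x_1^{\varepsilon_1}\dots x_n^{\varepsilon_n} \in F(X)$ I would set
$$
\hat{f}(w) = f(x_1)^{\varepsilon_1} \dots f(x_n)^{\varepsilon_n} \in G,
$$
with $\hat{f}(\e{F})$ equal to the identity of $G$. Identifying each $x \in X$ with the length-one word $x^1 \in F(X)$, the restriction condition $\hat{f}\restriction_X = f$ is then immediate from the formula; what needs work is the homomorphism property.

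To avoid checking compatibility with the cancellation in Definition \ref{def:product:in:F(X)} head-on, I would first apply the same formula to arbitrary (not necessarily irreducible) words, obtaining a map $\tilde{f}: W(X) \to G$. Because $\wprod$ on $W(X)$ is literally string concatenation, the identity $\tilde{f}(u \wprod v) = \tilde{f}(u)\tilde{f}(v)$ is built into the formula and makes $\tilde{f}$ a semigroup homomorphism from $(W(X), \wprod)$ to $G$. Moreover, the definition of $w^{-1}$ in Definition \ref{def:conc}(ii) yields, by a direct telescoping computation in $G$, that $\tilde{f}(w)\tilde{f}(w^{-1})$ is the identity of $G$ for every $w \in W(X)$.

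With this set-up, multiplicativity of $\hat{f} = \tilde{f}\restriction_{F(X)}$ follows from Lemma \ref{cancellation:of:two:words}: given $v, w \in F(X)$, decompose $v = v' \wprod v''$ and $w = w' \wprod w''$ with $w' = (v'')^{-1}$ and $v\fprod w = v' \wprod w''$, and then
$$
\hat{f}(v \fprod w) = \tilde{f}(v')\tilde{f}(w'') = \tilde{f}(v')\tilde{f}(v'')\tilde{f}(w')\tilde{f}(w'') = \hat{f}(v)\hat{f}(w),
$$
where the middle equality inserts the factor $\tilde{f}(v'')\tilde{f}(w') = \tilde{f}(v'')\tilde{f}((v'')^{-1})$, which is the identity of $G$ by the telescoping remark. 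The proof contains no real obstacle; the only thing requiring care is to work throughout in the larger semigroup $(W(X), \wprod)$, where multiplicativity is tautological, and to use Lemma \ref{cancellation:of:two:words} as the bridge back to reduced words.
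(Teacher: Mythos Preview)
Your proposal is correct and uses the same letter-by-letter definition of $\hat{f}$ as the paper; the paper, however, simply asserts that ``clearly, this mapping defines a homomorphism'' and does not carry out the verification via $W(X)$ and Lemma~\ref{cancellation:of:two:words} that you supply. Your extra step of passing through the semigroup $(W(X),\wprod)$ and invoking the cancellation decomposition is the standard way to justify that assertion, so your argument is a faithful (and more detailed) elaboration of the paper's proof rather than a different route.
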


\begin{proof}
Let $X,f$ and $G$ be as in the hypotheses. Given a non-empty word $w \in F(X)$ there exist unique $x_1, \dots, x_n \in X$ and $\varepsilon_1, \dots, \varepsilon_n \in \{-1,1\}$ such that $w = x_1^{\varepsilon_1}, \dots, x_n^{\varepsilon_n}.$ Define the mapping $\hat{f}$ such that $\hat{f}(w) = f(x_1)^{\varepsilon_1} \cdots f(x_n)^{\varepsilon_n}$.

Clearly, this mapping defines an homomorphism and furthermore $\hat{f} \restriction_X = f$ is satisfied by construction.
\end{proof}

\begin{definition}
\label{def:letters}
Let $X$ be a 
set. For a word $w=x_1^{\varepsilon_1} \dots x_n^{\varepsilon_n}\in W(X)$ with $n\ge 1$, 
$$
\lett(w)=\{x_1,\dots,x_n\}
$$
denotes the set of all letters $x_i$ appearing in $w$. We also let 
$\lett(\e{W})=\lett(\emptyset)=\emptyset$.
\end{definition}

The set $\lett(w)$ from the above definition coincides with the support  of the word $w$ in the variety of all groups  \cite{DS_Fundamenta}. It is worth noticing that the notion of the support of an element of a free group was  
introduced in \cite{DS_Fundamenta} for arbitrary varieties of groups.

We finish this section with two lemmas which shall be needed in the future.

\begin{lemma}
\label{letters:and:products}
\begin{itemize}
\item[(i)]
$\lett(v \cdot w) \subseteq \lett(v) \cup \lett(w)$ for all $v,w \in F(X)$.
\item[(ii)]
If $a_1,a_2,\dots,a_m\in F(X)$ and $h=a_1\fprod a_2\fprod \dots\fprod a_m$,
then
$\lett(h)\subseteq \bigcup_{l=1}^m \lett(a_l)$.
\end{itemize}
\end{lemma}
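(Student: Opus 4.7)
My plan is to prove part (i) directly from the cancellation description of the product given in Definition~\ref{def:product:in:F(X)}, and then derive part (ii) by a routine induction on $m$ using part (i).

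For part (i), take $v,w \in F(X)$ and invoke Lemma~\ref{cancellation:of:two:words} to obtain the unique decomposition $v = v' \wprod v''$ and $w = w' \wprod w''$ with $w' = (v'')^{-1}$, so that $v \fprod w = v' \wprod w''$ by definition. Since $v'$ is a sub-word of $v$ and $w''$ is a sub-word of $w$, we have $\lett(v') \subseteq \lett(v)$ and $\lett(w'') \subseteq \lett(w)$ directly from Definition~\ref{def:letters}. From the definition of concatenation, the set of letters of a concatenation is the union of the sets of letters of the factors, so
\[
\lett(v \fprod w) = \lett(v' \wprod w'') = \lett(v') \cup \lett(w'') \subseteq \lett(v) \cup \lett(w),
\]
which is what we wanted. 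The empty-word cases ($v = \e{F}$ or $w = \e{F}$) are handled automatically since $\lett(\e{W}) = \emptyset$.

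For part (ii), I proceed by induction on $m$. The case $m = 1$ is trivial since $h = a_1$ and $\lett(h) = \lett(a_1)$. Assuming the inclusion for $m - 1 \ge 1$, set $h' = a_1 \fprod \dots \fprod a_{m-1}$ so that $h = h' \fprod a_m$ by associativity of $\fprod$ (the fact stated just before Lemma~\ref{lem:F(X)}). Applying part (i) to $h'$ and $a_m$, then the inductive hypothesis to $h'$, yields
\[
\lett(h) \subseteq \lett(h') \cup \lett(a_m) \subseteq \bigcup_{l=1}^{m-1} \lett(a_l) \cup \lett(a_m) = \bigcup_{l=1}^{m} \lett(a_l),
\]
completing the induction.

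There is no real obstacle here; the only mildly subtle point is observing that although cancellation in $v \fprod w$ may shorten the word dramatically, it can never introduce a letter that was not already present in $v$ or $w$, which is exactly what the decomposition $v \fprod w = v' \wprod w''$ makes transparent.
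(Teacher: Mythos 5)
Your proof is correct and follows the same route as the paper's: part (i) uses the cancellation decomposition $v \fprod w = v' \wprod w''$ from Lemma~\ref{cancellation:of:two:words} together with the observation that $v'$ and $w''$ are sub-words of $v$ and $w$, and part (ii) follows by a straightforward induction on $m$ using (i). The only difference is that you spell out the induction in (ii), which the paper leaves to the reader.
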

\begin{proof}
(i)
Let $v', v'', w', w''\in F(X)$ be as in Definition \ref{def:product:in:F(X)}. 
Then $v\fprod w=v'\wprod w''$, which implies
$\lett(v\fprod w)=\lett(v'\wprod w'')
=\lett(v')\cup\lett(w'')$
 by Definition \ref{def:letters}.
Since $v'$ is a sub-word of $v$, we have 
$\lett(v')\subseteq \lett(v)$ by Definition \ref{def:letters}.
Similarly, since $w''$ is a sub-word of $w$, we have 
$\lett(w'')\subseteq \lett(w)$. This proves item (i).

Item (ii) is proved by induction making use of item (i).
\end{proof}

\begin{lemma}
\label{support:lemma}
Suppose that $x_1,x_2,\dots,x_s\in X$,
$\varepsilon_1,\varepsilon_2,\dots,\varepsilon_s\in\{-1,1\}$,
$g_0=x_1^{\varepsilon_1}x_2^{\varepsilon_2}\dots x_s^{\varepsilon_s}\in F(X)$, $j=1,\dots,s$ and 
$x_j\not=x_p$ for $p=1,\dots,s$ with $j\not=p$.
Then $x_j\in\lett(h)$ for every $h\in \grp{g_0}\setminus\{e\}$. 
\end{lemma}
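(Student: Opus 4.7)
The plan is to detect the letter $x_j$ by means of a carefully chosen homomorphism from $F(X)$ into $\Z$. Specifically, I would define $f : X \to \Z$ by $f(x_j) = 1$ and $f(y) = 0$ for every $y \in X \setminus \{x_j\}$, and then invoke the universal property of the free group established earlier in this section to extend $f$ to a homomorphism $\hat{f} : F(X) \to \Z$. On an irreducible word $y_1^{\delta_1} \cdots y_m^{\delta_m}$, this extension takes the value $\sum_{i=1}^{m} \delta_i f(y_i)$, so $\hat{f}$ vanishes whenever $x_j$ fails to appear as a letter.

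Under the hypotheses of the lemma, the computation $\hat{f}(g_0) = \varepsilon_j \in \{-1, 1\}$ is immediate, because the condition $x_p \neq x_j$ for $p \neq j$ guarantees that only the $j$-th factor of $g_0$ contributes a nonzero summand. For any $h \in \grp{g_0} \setminus \{e\}$, one picks $n \in \Z$ with $h = g_0^n$; since $g_0^0 = e$ and $h \neq e$, this $n$ must be nonzero, so $\hat{f}(h) = n\varepsilon_j \neq 0$. On the other hand, if $x_j$ were absent from $\lett(h)$, then every letter occurring in the irreducible expression of $h$ would belong to $X \setminus \{x_j\}$ and hence be sent to $0$ by $f$, forcing $\hat{f}(h) = 0$ and contradicting the previous line.

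The argument is essentially routine and presents no serious obstacle; the only subtle point is recognising that the uniqueness of $x_j$ as a letter of $g_0$ pins down its exponent sum in $g_0$ as $\pm 1$, which is precisely what allows $\hat{f}$ to separate the nontrivial powers of $g_0$ from those elements of $F(X)$ whose irreducible form avoids $x_j$.
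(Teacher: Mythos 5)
Your proposal is correct and is essentially the same argument as the paper's: both extend a set map to a homomorphism via the universal property of $F(X)$ and use it to ``count'' occurrences of $x_j$, concluding that a nontrivial power of $g_0$ cannot avoid $x_j$. The only cosmetic difference is the target group --- you send $x_j \mapsto 1 \in \Z$ and the other letters to $0$, while the paper sends $x_j \mapsto x_j$ and the other letters to $e$, landing in $\grp{x_j} \cong \Z$; under this isomorphism the two homomorphisms coincide.
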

\begin{proof}
Fix $h\in\grp{g_0}$. Then $h=g_0^q$ for some integer $q$. 
Consider the map $f: X\to F(X)$ satisfying 
\begin{equation}
\label{special:f}
f(x_j)=x_j
\ 
\text{ and }
\ 
f(x)=e
\text{ for }
x\in X\setminus\{x_j\}.
\end{equation}
Let $\hat{f}: F(X) \to F(X)$ be the homomorphism such that $\hat{f} \restriction_X = f$.
Since $\hat{f}$ is a homomorphism extending $f$ and \eqref{special:f} holds, we have
$\hat{f}(h)=\hat{f}(g_0^q)=x_j^q$.

Suppose that $x_j\not\in \lett(h)$. Then $\lett(h)\subseteq X\setminus\{x_j\}$, and so
$\hat{f}(h)=e$
by \eqref{special:f}.
This shows that $x_j^q=e$, which implies $q=0$. Thus, $h=g_0^q=e$.
\end{proof}

\section{\fns s}
\label{Sec:Trees}
\label{sec:nghbd}

\begin{definition}
\label{def:bar}
Let $X$ be a set.
Given any subset $A \subseteq F(X)$, we define $A^{-1} = \{a^{-1}: a \in A\}$, and $\dash{A} = A \cup A^{-1} \cup \{e\}$.
\end{definition}

\begin{definition}
\label{SPSa}
Let $X$ be a 
set.
A {\em \fns\ of $F(X)$\/} is a 
finite
sequence
$\nice{U} = \{U_i: i \leq n\}$ (where $n\in\N^+$) 
satisfying 
the following conditions:
\begin{itemize}
\item[(1$_{\nice{U}}$)] 
$U_i\subseteq F(X)$ for every $i\le n$,
\item[(2$_\nice{U}$)] $U_{i}^{-1} = U_i$ for every 
$i\leq n$,
\item[(3$_\nice{U}$)] $ \bigcup_{x \in \dash{X}} x \fprod U_{i+1} \fprod U_{i+1} \fprod x^{-1} \subseteq U_i$ for every 
$i < n$,
\item[(4$_\nice{U}$)] 
$e\in U_n$.
\end{itemize}
\end{definition}

\begin{remark}
\label{identity:remark}
{\em If $X$ is a set and $\nice{U} = \{U_i: i \leq n\}$ is a \fns\ for $F(X)$, then 
$e\in U_i$ for every $i\le n$.\/}
This statement is proved by finite reverse induction on $i=n, n-1,\dots, 0$.
For $i=n$, the statement holds by (4$_\nice{U}$).
Suppose now that $i<n$ and we have already proved that $e\in U_{i+1}$. Since $e\in\bar{X}$ by Definition \ref{def:bar}, 
$e=e\fprod e\fprod e\fprod e\in \bigcup_{x \in \dash{X}} x \fprod U_{i+1} \fprod U_{i+1} \fprod x^{-1} \subseteq U_i$ by (3$_\nice{U}$).
\end{remark}

\begin{definition}
\label{def:enrichment}
Let $\nice{U} = \{U_i : i \leq n\}$ be a finite sequence of subsets of $F(X)$ for some set $X$.
\begin{itemize}
\item[(i)]
Let $B\subseteq F(X)$. Define 
\begin{equation}
\label{eq:def:Vn}
V_n=U_n\cup B.
\end{equation}
By finite reverse induction on $i=n-1,n-2,\dots,0$, define
\begin{equation}
\label{eq:def:Vi}
V_i=U_i\cup\bigcup_{x\in \dash{X}} x\fprod V_{i+1}\fprod V_{i+1}\fprod x^{-1}.
\end{equation}
We shall call the sequence $\nice{V}=\{V_i:i\le n\}$ the {\em $B$-enrichment of the sequence $\nice{U}$ in $F(X)$\/}.
\item[(ii)]
For a set $C\subseteq F(X)$, we shall call the $\left(\bigcup_{c\in C} \grp{c}\right)$-enrichment of 
$\nice{U}$ in $F(X)$ the {\em cyclic $C$-enrichment of $\nice{U}$ in $F(X)$\/}.
\end{itemize}
\end{definition}

\begin{lemma}
\label{enrichment:lemma}
Let $X$ be a set and $\nice{U}=\{U_i:i\le n\}$ be a finite sequence
such that:
\begin{itemize}
\item[(a)] $U_i\subseteq F(X)$ for all $i\le n$,
\item[(b)] $U_i^{-1}=U_i$ for every $i\le n$,
\item[(c)] $e\in U_n$. 
\end{itemize}
Furthermore, 
let $B\subseteq F(X)$ be a set satisfying
\begin{itemize}
\item[(d)] $B^{-1}=B$.
\end{itemize}
Then the $B$-enrichment of $\nice{U}$ in $F(X)$ is a \fns\ for $F(X)$.
\end{lemma}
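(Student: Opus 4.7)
The goal is to verify, for the $B$-enrichment $\nice{V}=\{V_i:i\le n\}$, each of the four conditions (1$_\nice{V}$)--(4$_\nice{V}$) from Definition~\ref{SPSa}. Conditions (1$_\nice{V}$) and (3$_\nice{V}$) will be essentially free from the recursive definition, (4$_\nice{V}$) follows from hypothesis (c), and the only content is verifying symmetry (2$_\nice{V}$), which needs finite reverse induction exploiting hypotheses (b) and (d).

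First I would dispose of the easy items. For (1$_\nice{V}$), note that each $V_i$ is built inductively from subsets of $F(X)$ by unions and the binary operation $\fprod$, so $V_i\subseteq F(X)$ for every $i\le n$. For (3$_\nice{V}$), the recursion \eqref{eq:def:Vi} defining $V_i$ for $i<n$ explicitly contains $\bigcup_{x\in\dash{X}}x\fprod V_{i+1}\fprod V_{i+1}\fprod x^{-1}$ as a summand, so the inclusion required in (3$_\nice{V}$) holds by construction. For (4$_\nice{V}$), hypothesis (c) gives $e\in U_n$, and $V_n=U_n\cup B\supseteq U_n$, so $e\in V_n$.

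The main step is (2$_\nice{V}$): symmetry of each $V_i$. I would prove $V_i^{-1}=V_i$ by finite reverse induction on $i=n, n-1, \dots, 0$. The base case $i=n$ uses $V_n=U_n\cup B$ together with hypotheses (b) and (d), giving $V_n^{-1}=U_n^{-1}\cup B^{-1}=U_n\cup B=V_n$. For the inductive step, assume $V_{i+1}^{-1}=V_{i+1}$ and compute the inverse of a typical element of the recursive part: for $x\in\dash{X}$ and $v_1,v_2\in V_{i+1}$, one has $(x\fprod v_1\fprod v_2\fprod x^{-1})^{-1}=x\fprod v_2^{-1}\fprod v_1^{-1}\fprod x^{-1}$, which again lies in $x\fprod V_{i+1}\fprod V_{i+1}\fprod x^{-1}$ by the inductive hypothesis; hence each set $x\fprod V_{i+1}\fprod V_{i+1}\fprod x^{-1}$ is symmetric. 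Combined with $U_i^{-1}=U_i$ from (b), this gives $V_i^{-1}=V_i$.

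I do not expect any serious obstacle: the argument is a bookkeeping verification of the definition, and the one potentially tricky point is keeping the direction of the reverse induction straight (symmetry propagates from level $n$ downwards because that is how \eqref{eq:def:Vi} is oriented). The role of the symmetry of $\dash{X}$ itself, which is automatic from Definition~\ref{def:bar}, is what allows the single index $x$ rather than $x$ together with $x^{-1}$ in the conjugation sum, and this should be flagged briefly so that the symmetry computation above is transparent.
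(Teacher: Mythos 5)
Your proof is correct and takes the same route as the paper's: items (1$_\nice{V}$), (3$_\nice{V}$), (4$_\nice{V}$) are dispatched directly from the recursive definition, and the only real work is the reverse induction for symmetry (2$_\nice{V}$), which you carry out exactly as in the paper. One small correction to your closing remark: the symmetry of $\dash{X}$ is not what makes the conjugation sum symmetric, since your own computation $(x\fprod v_1\fprod v_2\fprod x^{-1})^{-1}=x\fprod v_2^{-1}\fprod v_1^{-1}\fprod x^{-1}$ already lands back in the set indexed by the \emph{same} $x$ (because $(x^{-1})^{-1}=x$), so no appeal to $\dash{X}=\dash{X}^{-1}$ is needed at that step.
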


\begin{proof}
Let $\nice{V}=\{V_i:i\le n\}$ be the $B$-enrichment of the sequence $\nice{U}$ in $F(X)$. It suffices to check conditions
(1$_\nice{V}$)--(4$_\nice{V}$) of Definition \ref{SPSa}.

(1$_\nice{V}$) Since $U_n$ and $B$ are subsets of $F(X)$ by our assumption, $V_n\subseteq F(X)$ by \eqref{eq:def:Vn}.
Note that $\dash{X}\subseteq F(X)$.
Applying finite reverse induction on $i=n-1,n-2,\dots,0$,
one concludes from this, \eqref{eq:def:Vi}
and $\bigcup_{i\le n} U_i\subseteq F(X)$ that $V_i\subseteq F(X)$ for all $i\le n$.

(2$_\nice{V}$) 
We shall prove by finite reverse induction on $i=n,n-1,\dots,0$
that $V_i^{-1}=V_i$.
First, note that
$V_n^{-1}=U_n^{-1}\cup B^{-1}=U_n\cup B=V_n$ by \eqref{eq:def:Vn}, (b) and (d).
Assume now that $i<n$ and the equation $V_{i+1}^{-1}=V_{i+1}$ has already been proved.
It easily follows from this inductive assumption 
that
\begin{equation}
\label{conj:are:symmetric}
\left(\bigcup_{x\in \dash{X}} x\fprod V_{i+1}\fprod V_{i+1}\fprod x^{-1}\right)^{-1}
=
\bigcup_{x\in \dash{X}} x\fprod V_{i+1}^{-1}\fprod V_{i+1}^{-1}\fprod x^{-1}
=
\bigcup_{x\in \dash{X}} x\fprod V_{i+1}\fprod V_{i+1}\fprod x^{-1}.
\end{equation}
Since $U_i^{-1}=U_i$ by (b), from \eqref{eq:def:Vi} and \eqref{conj:are:symmetric} we conclude that $V_i^{-1}=V_i$.

(3$_\nice{V}$)   is straightforward from \eqref{eq:def:Vi}.

(4$_\nice{V}$) is straightforward from (c) and \eqref{eq:def:Vn}.
\end{proof}

\begin{remark}
\label{nghb:systems:remark}
{\em Every \fns\ $\nice{U}=\{U_i:i\le n\}$ for $F(X)$ satisfies the assumptions of Lemma \ref{enrichment:lemma}.}
Indeed, item (a) follows from (1$_\nice{U}$), 
item (b) follows from (2$_\nice{U}$), and item (c) follows from (4$_\nice{U}$).
\end{remark}

\begin{corollary}
\label{cor:1}
For every symmetric subset $B$ of $F(X)$ and each \fns\
$\nice{U}=\{U_i:i\le n\}$ for $F(X)$, the $B$-enrichment of $\nice{U}$ in $F(X)$
is a \fns\ for $F(X)$.
\end{corollary}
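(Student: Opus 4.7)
The plan is to obtain the corollary as an immediate consequence of Lemma \ref{enrichment:lemma} combined with Remark \ref{nghb:systems:remark}, since essentially all the work has already been done.

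First, I would verify that the given \fns\ $\nice{U}=\{U_i:i\le n\}$ for $F(X)$ satisfies hypotheses (a), (b), (c) of Lemma \ref{enrichment:lemma}. This is precisely the content of Remark \ref{nghb:systems:remark}: condition (a) comes from ($1_\nice{U}$), condition (b) from ($2_\nice{U}$), and condition (c) from ($4_\nice{U}$). So no new verification is needed at this step.

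Next, I would dispatch hypothesis (d) of Lemma \ref{enrichment:lemma} using the assumption that $B$ is symmetric, i.e.\ $B^{-1}=B$ (recalling the definition of a symmetric subset given near the start of the introduction).

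With all four hypotheses (a)--(d) of Lemma \ref{enrichment:lemma} verified, I would invoke the lemma to conclude that the $B$-enrichment $\nice{V}=\{V_i:i\le n\}$ of $\nice{U}$ in $F(X)$ is a \fns\ for $F(X)$, which is exactly the statement of the corollary. There is essentially no obstacle here; the only thing to be careful about is to apply the definition of symmetric subset to extract hypothesis (d) and to cite Remark \ref{nghb:systems:remark} for the other three.
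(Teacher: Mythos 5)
Your proposal is correct and follows the paper's proof exactly: cite Remark \ref{nghb:systems:remark} for hypotheses (a)--(c) of Lemma \ref{enrichment:lemma}, note that symmetry of $B$ gives (d), and apply the lemma.
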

\begin{proof}
By Remark \ref{nghb:systems:remark}, $\nice{U}$ satisfies items (a)--(c) of Lemma \ref{enrichment:lemma}.
Item (d) of this lemma holds because $B$ is symmetric by our assumption.
Now the conclusion of our corollary follows from 
the conclusion of Lemma \ref{enrichment:lemma}.
\end{proof}

\begin{definition} 
\label{extension:def}
Let $X$ and $Y$ be sets such that $X\not=\emptyset$ and $X\subseteq Y$. For a \fns\ $\nice{U}$ for $F(X)$, we shall denote by $\nice{U}_Y$ the 
cyclic $(Y\setminus X)$-enrichment
of $\nice{U}$ in $F(Y)$.
\end{definition}

\begin{corollary}
\label{cor:2}
Let $X$ and $Y$ be sets such that $X\not=\emptyset$ and $X\subseteq Y$. For each \fns\ $\nice{U}$ for $F(X)$, its
cyclic $(Y\setminus X)$-enrichment
$\nice{U}_Y$ is a \fns\ for $F(Y)$.
\end{corollary}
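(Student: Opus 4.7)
The plan is to apply Lemma \ref{enrichment:lemma} inside the ambient group $F(Y)$, taking as ``raw'' sequence the original $\nice{U}$ (reinterpreted as a sequence of subsets of $F(Y)$ via the natural inclusion $F(X)\subseteq F(Y)$) and as ``enriching'' set $B=\bigcup_{c\in Y\setminus X}\grp{c}\subseteq F(Y)$, which is the set appearing in the cyclic $(Y\setminus X)$-enrichment according to Definitions \ref{def:enrichment}(ii) and \ref{extension:def}.

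First I would observe that $\nice{U}=\{U_i:i\le n\}$, although originally defined as a \fns\ for $F(X)$, satisfies conditions (a)--(c) of Lemma \ref{enrichment:lemma} relative to $F(Y)$ as well. Indeed, (a) holds because $U_i\subseteq F(X)\subseteq F(Y)$; (b) is just (2$_\nice{U}$), which is an algebraic property of $U_i$ that does not depend on the ambient group; and (c) is (4$_\nice{U}$). Note that the stronger condition (3$_\nice{U}$), which would require conjugation by $\dash{Y}$ rather than $\dash{X}$ to make $\nice{U}$ a \fns\ for $F(Y)$, is not needed here: the whole point of Lemma \ref{enrichment:lemma} is that it upgrades (a)--(c) to a full \fns\ by means of the enrichment procedure.

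Next I would verify condition (d) of Lemma \ref{enrichment:lemma} for $B$. Each cyclic subgroup $\grp{c}$ with $c\in Y\setminus X$ is symmetric, since $\grp{c}^{-1}=\grp{c}$, and a union of symmetric sets is symmetric; hence $B^{-1}=B$. (If $Y\setminus X=\emptyset$, then $B=\emptyset$, which is vacuously symmetric, and the argument still goes through.)

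With (a)--(d) in hand, Lemma \ref{enrichment:lemma} applied in $F(Y)$ yields that the $B$-enrichment of $\nice{U}$ in $F(Y)$ is a \fns\ for $F(Y)$. By Definitions \ref{def:enrichment}(ii) and \ref{extension:def}, this $B$-enrichment is precisely $\nice{U}_Y$, completing the proof. There is no real obstacle here; the only thing to be careful about is not confusing the two uses of enrichment-in-$F(X)$ (as in Corollary \ref{cor:1}) with enrichment-in-$F(Y)$, and recognising that Lemma \ref{enrichment:lemma} was deliberately phrased with the weaker hypotheses (a)--(c) rather than the full \fns\ axioms precisely so that it can be invoked across this change of ambient group.
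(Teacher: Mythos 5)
Your proposal is correct and takes essentially the same route as the paper: both apply Lemma~\ref{enrichment:lemma} in the ambient group $F(Y)$, check conditions (a)--(d) there (the paper simply cites Remark~\ref{nghb:systems:remark} for (a)--(c) where you spell them out directly), and then identify the resulting $B$-enrichment with $\nice{U}_Y$ via Definitions~\ref{def:enrichment}(ii) and~\ref{extension:def}.
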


\begin{proof}
Since $F(X)\subseteq F(Y)$, it follows from 
Remark \ref{nghb:systems:remark} that $\nice{U}$ satisfies items (a)--(c) of Lemma \ref{enrichment:lemma} (with $X$ replaced by $Y$). Since $B=\bigcup_{y\in Y\setminus X} \grp{y}$ is a symmetric subset of $F(Y)$, item (d) of Lemma \ref{enrichment:lemma} is satisfied as well. Applying this lemma, we 
conclude that the
$B$-enrichment
of $\nice{U}$ is a \fns\ for $F(Y)$. It remains only to note that
this $B$-enrichment coincides with $\nice{U}_Y$ by Definitions 
\ref{def:enrichment}(ii) and \ref{extension:def}.
\end{proof}

\section{Extension of \fns s}
\label{sec:extensions}

\begin{definition}
\label{SPSb}
Given two 
sets $X$ and $Y$,
we shall say that a \fns\ $\nice{V} = \{V_i:i\leq m\}$ for $F(Y)$ is
an \emph{extension\/} of a \fns\ $\nice{U} = \{U_i: i \leq n\}$
for $F(X)$
if and only if the following conditions are satisfied:
\begin{itemize}
\item[(i$_{\nice{V}}^{\nice{U}}$)] $X \subseteq Y$, so $F(X)\subseteq F(Y)$,
\item[(ii$_{\nice{V}}^{\nice{U}}$)] $n \le m$,
\item[(iii$_{\nice{V}}^{\nice{U}}$)] $V_i\cap F(X) =U_i$ for every $i\leq n$.
\end{itemize}
\end{definition}

A straightforward proof of the next lemma is left to the reader.

\begin{lemma}
\label{trivial-extension:by:e}
Let $X$ be a 
set, and 
let $\nice{U} = \{U_i: i \leq n\}$ be a \fns\ for  $F(X)$.
Let $m\in\N$ such that $m>n$.
Define $V_i=U_i$ for $i\le n$ and 
$V_i=\{e\}$ for $n<i\le m$.
Then
$\nice{V}=\{V_i:i\le m\}$ is a \fns\ for $F(X)$ extending $\nice{U}$. 
\end{lemma}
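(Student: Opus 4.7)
The plan is to verify, in order, the four axioms (1$_\nice{V}$)--(4$_\nice{V}$) of Definition~\ref{SPSa} establishing that $\nice{V}$ is a \fns\ for $F(X)$, and then the three extension conditions (i$_\nice{V}^\nice{U}$)--(iii$_\nice{V}^\nice{U}$) of Definition~\ref{SPSb}. The latter three are essentially free: (i$_\nice{V}^\nice{U}$) is trivial since $Y=X$, (ii$_\nice{V}^\nice{U}$) is the hypothesis $n\le m$, and (iii$_\nice{V}^\nice{U}$) follows because $V_i=U_i\subseteq F(X)$ for $i\le n$, so intersecting with $F(X)$ changes nothing.

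For the \fns\ axioms I would do a case split on whether $i\le n$ or $i>n$. Condition (1$_\nice{V}$) is immediate: for $i\le n$, $V_i=U_i\subseteq F(X)$ by (1$_\nice{U}$), while for $n<i\le m$, $V_i=\{e\}\subseteq F(X)$. Condition (2$_\nice{V}$) splits the same way: symmetry is inherited from (2$_\nice{U}$) on the lower range, and $\{e\}$ is trivially symmetric. Condition (4$_\nice{V}$) is immediate since $m>n$ forces $V_m=\{e\}$.

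The only axiom that requires attention is the conjugation-absorption condition (3$_\nice{V}$), which must be checked for every $i<m$. I would split into three sub-cases. For $i<n$, both $V_i=U_i$ and $V_{i+1}=U_{i+1}$, so the containment is exactly (3$_\nice{U}$). For $n<i<m$, the displayed union reduces to $\bigcup_{x\in\dash{X}} x\fprod\{e\}\fprod\{e\}\fprod x^{-1}=\{e\}=V_i$, giving equality. The genuine boundary case is $i=n$: here $V_{n+1}=\{e\}$, so the left-hand side of (3$_\nice{V}$) collapses to $\{e\}$, and we need $\{e\}\subseteq V_n=U_n$; this is precisely the content of Remark~\ref{identity:remark} applied to $\nice{U}$ (or directly of axiom (4$_\nice{U}$)).

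I do not expect any real obstacle; the whole argument is a routine bookkeeping check, and the mild ``interesting'' point is only making sure the seam at $i=n$ glues the truncated tail $U_0,\dots,U_n$ to the constant tail $\{e\},\dots,\{e\}$ without violating (3$_\nice{V}$), which works automatically because the trivial subset $\{e\}$ is absorbed by conjugation and lies in every $U_i$.
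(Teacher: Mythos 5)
Your proof is correct, and since the paper explicitly leaves this lemma to the reader as a straightforward verification, your argument is exactly the routine case check the authors intended: verify (1$_\nice{V}$)--(4$_\nice{V}$) by splitting on $i\le n$ versus $i>n$, with the only mild seam being $i=n$ where $\{e\}\subseteq U_n$ is needed via (4$_\nice{U}$) or Remark~\ref{identity:remark}, and then read off the extension conditions from $Y=X$, $n\le m$, and $V_i=U_i\subseteq F(X)$ for $i\le n$.
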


\begin{lemma}
\label{cyclic:C-enrichment}
Let $X$ be a 
set and $\nice{U}$ be a
\fns\  for $F(X)$.
Then for every set $Y$ containing $X$ and each 
set $C\subseteq \grp{Y\setminus X}$,
the cyclic $C$-enrichment $\nice{V}$
of $\nice{U}$ extends it.
\end{lemma}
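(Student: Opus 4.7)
The plan is to verify the three conditions of Definition \ref{SPSb} for $\nice{V}=\{V_i:i\le n\}$, where $n$ is the length of $\nice{U}$ and the enrichment is performed inside $F(Y)$. Setting $B=\bigcup_{c\in C}\grp{c}\subseteq F(Y)$, the set $B$ is symmetric as a union of subgroups, and $\nice{U}$ (viewed inside $F(Y)$ via $F(X)\subseteq F(Y)$) trivially satisfies hypotheses (a)--(c) of Lemma \ref{enrichment:lemma}; that lemma then shows $\nice{V}$ is a \fns\ for $F(Y)$. Conditions (i$_{\nice{V}}^{\nice{U}}$) and (ii$_{\nice{V}}^{\nice{U}}$) are immediate, since $X\subseteq Y$ is given and $\nice{V}$ has the same length $n$ as $\nice{U}$.

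The substantive task is condition (iii$_{\nice{V}}^{\nice{U}}$), namely $V_i\cap F(X)=U_i$ for every $i\le n$. The inclusion $U_i\subseteq V_i\cap F(X)$ is immediate from \eqref{eq:def:Vn}--\eqref{eq:def:Vi}. For the reverse inclusion, I would introduce the retraction homomorphism $\pi:F(Y)\to F(X)$ obtained from the universal property of $F(Y)$ by declaring $\pi(x)=x$ for $x\in X$ and $\pi(y)=e$ for $y\in Y\setminus X$. The decisive point is that the hypothesis $C\subseteq\grp{Y\setminus X}$ forces $B\subseteq\grp{Y\setminus X}\subseteq\ker\pi$, and hence $\pi(B)=\{e\}$. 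I would then show $\pi(V_i)\subseteq U_i$ for all $i\le n$ by reverse induction on $i$: the base $i=n$ follows from $V_n=U_n\cup B$ together with $\pi\restriction_{F(X)}=\mathrm{id}_{F(X)}$ and $\{e\}\subseteq U_n$ (Remark \ref{identity:remark}); the inductive step uses $\pi(\bar Y)\subseteq\bar X$ (since $\pi(y)=e\in\bar X$ for $y\in Y\setminus X$) to transport each element $x\fprod v_1\fprod v_2\fprod x^{-1}$ with $x\in\bar Y$ and $v_1,v_2\in V_{i+1}$ into $\bigcup_{z\in\bar X} z\fprod U_{i+1}\fprod U_{i+1}\fprod z^{-1}\subseteq U_i$, the last containment being (3$_\nice{U}$). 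Since $\pi$ is the identity on $F(X)$, any $g\in V_i\cap F(X)$ satisfies $g=\pi(g)\in\pi(V_i)\subseteq U_i$, completing the verification.

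There is no serious obstacle beyond locating the correct retraction. The condition $C\subseteq\grp{Y\setminus X}$ is tailored precisely so that $B$ lies in the kernel of the homomorphism that fixes $F(X)$ pointwise; once that observation is made, the conjugation-closure structure defining a \fns\ makes the induction step essentially automatic, as it translates one step of the recursive definition of $\nice{V}$ into the corresponding instance of (3$_\nice{U}$) for $\nice{U}$.
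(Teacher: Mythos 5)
Your proposal is correct and follows essentially the same route as the paper: introduce the retraction $\pi_X:F(Y)\to F(X)$ killing $Y\setminus X$, observe that $C\subseteq\grp{Y\setminus X}$ forces $\pi_X[B]=\{e\}$, prove $\pi_X[V_i]\subseteq U_i$ by reverse induction using (3$_\nice{U}$), and deduce $V_i\cap F(X)=U_i$ from $\pi_X\restriction_{F(X)}=\mathrm{id}$. The only (minor, and welcome) addition is your explicit invocation of Lemma \ref{enrichment:lemma} to confirm $\nice{V}$ is itself a \fns\ for $F(Y)$, a point the paper leaves implicit.
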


\begin{proof}
Let $\nice{U}=\{U_i:i\le n\}$ be a \fns\ for $F(X)$ and let 
$\nice{V}=\{V_i:i\le n\}$ be its 
cyclic $C$-enrichment 
in $F(X)$.
By Definition \ref{def:enrichment}, we have
\begin{equation}
\label{eq:def:Vn:Y}
V_n=U_n\cup \bigcup_{c\in C} \grp{c}
\end{equation}
and
\begin{equation}
\label{eq:def:Vi:Y}
V_i=U_i\cup\bigcup_{y\in \dash{Y}} y \fprod V_{i+1}\fprod V_{i+1}\fprod y^{-1}
\end{equation}
for $i=0,\dots,n-1$.

Let $p_X:Y\to F(X)$ be the map which sends each $y\in Y\setminus X$ to $e$ and is the identity on $X$, and let 
$\pi_X: F(Y) \to F(X)$ be the  homomorphism which extends it.
Then 
\begin{equation}
\label{properties:pi_X}
\pi_X\restriction_{F(X)}=\mathrm{id}_{F(X)}
\ 
\text{ and }
\ 
\pi_X[\grp{Y\setminus X}]=\{e\},
\end{equation}
where $\mathrm{id}_{F(X)}$ denotes the identity map of $F(X)$.
Since $U_i\subseteq F(X)$ by (1$_{\nice{U}}$),  the first equation in 
\eqref{properties:pi_X} implies that
\begin{equation}
\label{Ui:are:not:moved}
\pi_X(U_i)=U_i
\text{ for every }
i=0,1,\dots,n.
\end{equation}
\begin{claim}
\label{claim:1}
$\pi_X[V_i] \subseteq U_i$  for every $i \leq n$.
\end{claim}

\begin{proof}
We shall prove our claim by reverse induction on $i=n,n-1,\dots,0$.

Since 
$\pi_X$ is a homomorphism and $C\subseteq \grp{Y\setminus X}$,
from \eqref{eq:def:Vn:Y}, \eqref{properties:pi_X}, \eqref{Ui:are:not:moved} and (4$_\nice{U}$),
we get
\begin{equation*}
\pi_X[V_n] = \pi_X\left[U_n \cup \bigcup_{c \in C} \grp{y}\right] = \pi_X[U_n]\cup\{e\} = U_n\cup\{e\}=U_n.
\end{equation*} 

Suppose that $i<n$ and the inclusion $\pi_X[V_{i+1}] \subseteq U_{i+1}$ has already been proved.
We shall show that $\pi_X[V_i] \subseteq U_i$. 

Let $y \in \dash{Y}$ and $v_1,v_2 \in V_{i+1}$ be arbitrary. 
By inductive hypothesis we have that $\pi_X(v_k) \in \pi_X[V_{i+1}]\subseteq U_{i+1}$ for $k=1,2$. Note that $\pi_X(y) \in \dash{X}$ by
the definition of the homomorphism $\pi_X$ and Definition \ref{def:bar}.
Therefore,
\begin{equation*}
\pi_X(y \fprod v_1 \fprod v_2 \fprod y^{-1}) = \pi_X(y) \cdot  \pi_X(v_1) \cdot \pi_X(v_2) \cdot \pi_X(y)^{-1}\in \bigcup_{x \in \dash{X}} x \cdot U_{i+1} \cdot U_{i+1} \cdot x^{-1} \subseteq U_i
\end{equation*}
by (3$_\nice{U}$). This shows that
\begin{equation}
\label{eq:9:d}
\pi_X\left[\bigcup_{y\in \dash{Y}} y\fprod V_{i+1}\fprod V_{i+1}\fprod y^{-1}\right]
\subseteq U_i.
\end{equation}

Since $\pi_X$ is a homomorphism, 
combining \eqref{eq:def:Vi:Y}, \eqref{Ui:are:not:moved}, \eqref{eq:9:d}, we obtain
$$
\pi_X[V_i] = 
\pi_X\left[U_i\cup\bigcup_{y\in \dash{Y}} y\fprod V_{i+1}\fprod V_{i+1}\fprod y^{-1}\right]
=
\pi_X[U_i] \cup \pi_X\left[ \bigcup_{y \in \dash{Y}} y \fprod V_{i+1} \fprod V_{i+1} \fprod y^{-1}\right]\subseteq U_i\cup U_i=U_i.
$$
This finishes the inductive step.
\end{proof}

\begin{claim}
$V_i \cap F(X) = U_i$ for every $i \leq n$.
\end{claim}

\begin{proof}
The inclusion $U_i\subseteq V_i \cap F(X)$ follows from 
\eqref{eq:def:Vn:Y}, \eqref{eq:def:Vi:Y}
and (1$_{\nice{U}}$). To show the inverse inclusion
$V_i \cap F(X) \subseteq U_i$,
let $h\in V_i \cap F(X)$ be arbitrary. Then $h=\pi_X(h)\in \pi_X[V_i]\subseteq U_i$ by the first equation in \eqref{properties:pi_X} and Claim \ref{claim:1}.
\end{proof}
Since $n \leq n$ and 
$X\subseteq Y$,
conditions (i$_{\nice{U}_Y}^{\nice{U}} $) and (ii$_{\nice{U}_Y}^{\nice{U}} $) of Definition \ref{SPSb} are satisfied.
Condition (iii$_{\nice{U}_Y}^{\nice{U}} $) holds by the previous claim.
Since all conditions from Definition \ref{SPSb} are met,  $\nice{V}$ extends $\nice{U}$.
\end{proof}

\begin{corollary}
\label{extension:set:b}
If $X$ and $Y$ are sets such that $X\not=\emptyset$ and $X\subseteq Y$, then for every \fns\ $\nice{U}$ for $F(X)$, its cyclic 
$(Y\setminus X)$-enrichment $\nice{U}_Y$ 
extends $\nice{U}$.
\end{corollary}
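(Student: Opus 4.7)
The plan is to observe that this corollary is essentially an immediate consequence of the two preceding general results, namely Corollary \ref{cor:2} and Lemma \ref{cyclic:C-enrichment}, once we match up definitions.

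First, to even speak of $\mathscr{U}_Y$ extending $\mathscr{U}$ in the sense of Definition \ref{SPSb}, we need $\mathscr{U}_Y$ to be a finite neighbourhood system for $F(Y)$. This is exactly the content of Corollary \ref{cor:2} applied to $X\subseteq Y$ and $\mathscr{U}$, so this step is already done.

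Next, I would invoke Lemma \ref{cyclic:C-enrichment} with the choice $C = Y\setminus X$. The hypothesis of that lemma requires $C \subseteq \langle Y\setminus X\rangle$, which holds trivially because every element of $Y\setminus X$ lies in the subgroup it generates, hence in $\langle Y\setminus X\rangle$. The conclusion of Lemma \ref{cyclic:C-enrichment} then states that the cyclic $(Y\setminus X)$-enrichment of $\mathscr{U}$ in $F(Y)$ extends $\mathscr{U}$. By Definition \ref{extension:def}, this cyclic $(Y\setminus X)$-enrichment is by definition $\mathscr{U}_Y$, which concludes the proof.

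There is no real obstacle here; the substance of the argument has already been carried out inside the proof of Lemma \ref{cyclic:C-enrichment} (verification of conditions (i$^{\mathscr{U}}_{\mathscr{U}_Y}$)--(iii$^{\mathscr{U}}_{\mathscr{U}_Y}$) via the retraction homomorphism $\pi_X : F(Y)\to F(X)$). The only conceptual point worth noting is that Definition \ref{extension:def} specifies $\mathscr{U}_Y$ as a cyclic $(Y\setminus X)$-enrichment, so the inclusion $Y\setminus X\subseteq \langle Y\setminus X\rangle$ is precisely what is needed to apply Lemma \ref{cyclic:C-enrichment} with $C=Y\setminus X$; the assumption $X\neq\emptyset$ is used only to ensure that $F(X)$ is meaningful (and inherited from the hypothesis of the referenced results).
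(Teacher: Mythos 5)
Your proof is correct and matches the paper's intent: the paper states Corollary \ref{extension:set:b} without proof, evidently regarding it as an immediate consequence of Corollary \ref{cor:2} (which guarantees $\nice{U}_Y$ is a \fns\ for $F(Y)$) and Lemma \ref{cyclic:C-enrichment} applied with $C = Y\setminus X \subseteq \grp{Y\setminus X}$, which is exactly the argument you give.
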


\section{Canonical representations for elements of neighbourhoods of $B$-enrichments}
\label{sec:enrichments}

\begin{definition}
\label{canonical:representation:definition}
Assume that $\nice{U} = \{U_i : i \leq n\}$ is a finite sequence of subsets of $F(X)$, $B\subseteq F(X)$ and $\nice{V}=\{V_i:i\le n\}$ is the $B$-enrichment of $\nice{U}$ in $F(X)$. 
By
finite reverse induction 
on $i=n,n-1,\dots,0$, we shall define a (not necessarily unique)
{\em canonical representation\/}
\begin{equation}
\label{eq:h:representation}
h=a_1\fprod a_2\fprod \dots\fprod a_m
\
\text{ (for a suitable }
m\in\N^+)
\end{equation}
of every element $h\in V_i$ 
as follows.

{\em Basis of induction\/}.
For $h\in V_n$, we let $h=a_1$ be the canonical representation of $h$.

{\em Inductive step\/}.
Suppose that $i$ is an integer satisfying $0\le i<n$ and we have already defined a canonical representation of every element
$h\in V_{i+1}$.
We fix $h\in V_i$ and define its canonical representation as in 
\eqref{eq:h:representation} according to the rules outlined below.
By \eqref{eq:def:Vi}, at least one (perhaps both) of the following cases holds.

{\sl Case 1\/}. $h\in U_i$. In this case, we let $h=a_1$ be a canonical representation of $h$.

{\sl Case 2\/}. 
$h=x\fprod u\fprod v\fprod x^{-1}$ for 
suitable
  $x\in\dash{X}$ and 
$u,v\in V_{i+1}$. 
Suppose also that 
\begin{equation}
\label{eq:u:v}
u=b_1\fprod b_2\fprod\dots\fprod b_{m_1}
\
\text{ and }
\ 
v=c_1\fprod c_2\fprod\dots\fprod c_{m_2},
\end{equation}
are some canonical representations of $u$ and $v$, respectively.
(These canonical representations were already defined, as $u,v\in V_{i+1}$.)
Then we call 
\begin{equation}
\label{eq:h:gappei}
h=x\fprod 
b_1\fprod b_2\fprod\dots\fprod b_{m_1}
\fprod
c_1\fprod c_2\fprod\dots\fprod c_{m_2}
\fprod x^{-1}
\end{equation}
a canonical representation of $h$.
\end{definition}

\begin{lemma}
\label{reduction:lemma}
Let $\nice{U}=\{U_i:i\le n\}$ 
be a finite sequence satisfying items (a), (b) and (c) of Lemma
\ref{enrichment:lemma}.
Assume that $B'\subseteq B\subseteq F(X)$ and 
$B'$ is symmetric.
Let
$\nice{V}'=\{V_i':i\le n\}$
and
$\nice{V}=\{V_i:i\le n\}$ 
be the $B'$-enrichment and the $B$-enrichment of $\nice{U}$
in $F(X)$, respectively.
Suppose that 
\begin{equation}
\label{eq:12:g}
(B\setminus B')\cap \left(\bar{X}\cup\bigcup_{i=1}^n V_i'\right)\subseteq \{e\}.
\end{equation}

Let $\eta:F(X)\to F(X)$ be the map defined by
\begin{equation}
\label{eq:a_l':new}
\eta(g)=
\begin{cases}
e &\text{if } g\in B\setminus B'\\
g & \text{otherwise}.
 \end{cases}
\end{equation}

If $i\le n$ and $h=a_1\fprod a_2\fprod\dots\fprod a_m$ is a canonical representation  
of some element $h\in V_i$,
then
$
h'=\eta(a_1)\fprod \eta(a_2)\fprod\dots\fprod \eta(a_m)$ is a canonical representation of $h'\in V_i'$.
\end{lemma}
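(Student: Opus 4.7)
The plan is to proceed by finite reverse induction on $i = n, n-1, \dots, 0$, mirroring the recursive structure of Definition \ref{canonical:representation:definition}. At each step I would verify two things simultaneously: that the product $h' = \eta(a_1) \fprod \eta(a_2) \fprod \dots \fprod \eta(a_m)$ belongs to $V_i'$, and that the sequence of factors $(\eta(a_1), \dots, \eta(a_m))$ fits one of the two templates of Definition \ref{canonical:representation:definition} at level $i$ for the $B'$-enrichment $\nice{V}'$.

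For the base case $i = n$, every canonical representation has a single factor $a_1 = h \in V_n = U_n \cup B$. If $h \notin B \setminus B'$ then $\eta(h) = h \in U_n \cup B' = V_n'$; if $h \in B \setminus B'$ then $\eta(h) = e$, which lies in $U_n \subseteq V_n'$ by (c). In either situation the one-factor sequence is a basis-case canonical representation in $\nice{V}'$.

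For the inductive step at level $i < n$, I would split on which of the two cases of Definition \ref{canonical:representation:definition} produced the given representation. In Case 1 (single factor, $a_1 = h \in U_i$), the inclusion $U_i \subseteq V_i'$ following from \eqref{eq:def:Vn}--\eqref{eq:def:Vi}, together with hypothesis \eqref{eq:12:g}, forces either $a_1 \notin B \setminus B'$, giving $\eta(a_1) = a_1 \in U_i$, or $a_1 = e$, giving $\eta(a_1) = e = a_1 \in U_i$; in both sub-cases the single factor $\eta(a_1)$ is a Case 1 canonical representation at level $i$ in $\nice{V}'$. In Case 2 we have $h = x \fprod u \fprod v \fprod x^{-1}$ with $x \in \bar{X}$ and with canonical representations $u = b_1 \fprod \dots \fprod b_{m_1}$ and $v = c_1 \fprod \dots \fprod c_{m_2}$ of elements $u, v \in V_{i+1}$. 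Since $\bar{X}$ is symmetric, both $x$ and $x^{-1}$ lie in $\bar{X}$, so the clause $\bar{X} \cap (B \setminus B') \subseteq \{e\}$ implicit in \eqref{eq:12:g}, combined with $\eta(e) = e$, yields $\eta(x) = x$ and $\eta(x^{-1}) = x^{-1}$. The inductive hypothesis applied to the sub-representations of $u$ and $v$ produces elements $u', v' \in V_{i+1}'$ carrying the canonical representations $\eta(b_1) \fprod \dots \fprod \eta(b_{m_1})$ and $\eta(c_1) \fprod \dots \fprod \eta(c_{m_2})$. Then $x \fprod u' \fprod v' \fprod x^{-1} \in V_i'$ by \eqref{eq:def:Vi}, and the obvious concatenation produces the required Case 2 canonical representation of $h'$.

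The main friction point, and the reason for the precise shape of \eqref{eq:12:g}, is Case 2 of the inductive step: one must rule out $\eta$ collapsing the outer conjugator $x$ or its inverse to $e$, since doing so would destroy the Case 2 template for $h'$. The clause $\bar{X} \cap (B \setminus B') \subseteq \{e\}$ is tailor-made for exactly this. The parallel worry in Case 1, that a given element of $U_i$ might be erased by $\eta$, is dissolved by the $V_i' \cap (B \setminus B') \subseteq \{e\}$ half of \eqref{eq:12:g}, which forces any such element to be $e$ and hence to remain in $U_i$ after applying $\eta$.
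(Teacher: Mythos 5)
Your proposal is correct and follows essentially the same route as the paper: finite reverse induction on $i$, with the base case handled by splitting on whether $a_1 \in B\setminus B'$, and the inductive step split into the two cases of Definition \ref{canonical:representation:definition}, using \eqref{eq:12:g} to conclude that any element of $U_i$ or $\bar{X}$ that happens to lie in $B\setminus B'$ must be $e$ and hence is fixed by $\eta$. The only cosmetic difference is that in Case~2 you conclude $x \fprod u' \fprod v' \fprod x^{-1} \in V_i'$ directly from \eqref{eq:def:Vi}, whereas the paper detours through condition (3$_{\nice{V}'}$) of Definition \ref{SPSa}; both are valid and your route is slightly more direct.
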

\begin{proof}
We shall prove this lemma by finite reverse induction on $i = n, n-1, \dots, 0$.

First, we shall prove the statement of our lemma for $i=n$.
By Definition \ref{def:enrichment}(i), we have
\begin{equation}
\label{bottom:extension}
V_n'=U_n\cup B'
\text{ and }
V_n=U_n\cup B.
\end{equation}
Fix $h\in V_n$. By Definition 
\ref{canonical:representation:definition},
$h=a_1$ is a canonical representation of $h$.

If $a_1\in B\setminus B'$, then $\eta(a_1)=e\in U_n\subseteq V_n'$ by
\eqref{eq:a_l':new}, (c) and \eqref{bottom:extension}.
Thus, $h'=\eta(a_1)$ is a canonical representation of the element
$h'=e\in V_n'$ by Definition 
\ref{canonical:representation:definition}.

Suppose now that $a_1\not\in B\setminus B'$.
Then $\eta(a_1)=a_1$ by \eqref{eq:a_l':new}, so $h=a_1=\eta(a_1)=h'$.
Since $h\in V_n$, \eqref{bottom:extension} implies that either $h\in U_n$ or $h\in B$. In the former case, $h'=h\in U_n\subseteq V_n'$ by \eqref{bottom:extension}. In the latter case,
from $a_1=h\in B$ and $a_1\not\in B\setminus B'$, we conclude that $a_1\in B'$. Since $B'\subseteq V_n'$ by \eqref{bottom:extension}, it follows that $h'=a_1\in V_n'$.
Thus, $h'=\eta(a_1)$ is a canonical representation of the element
$h'\in V_n'$ by Definition 
\ref{canonical:representation:definition}.

Suppose that that $i<n$ and the statement of our lemma has already been proved for $i+1$. Fix an arbitrary $h\in V_i$.
We consider two cases as in the inductive step of Definition 
\ref{canonical:representation:definition}.

\smallskip
{\em Case 1\/}. $h\in U_i$. In this case $h=a_1$ is a canonical representation of 
$h$ by Case 1 of Definition 
\ref{canonical:representation:definition}.
Since $U_i\subseteq V_i'$, we have $h=a_1\in V_i'$.

If $a_1\in B\setminus B'$, then $a_1=e$ by \eqref{eq:12:g}
and $\eta(a_1)=e$ by \eqref{eq:a_l':new}.
In particular, $h'=\eta(a_1)=e=a_1=h$.
Since $h\in U_i$, we conclude that
$h'=\eta(a_1)$ is a canonical representation of $h'\in V_i'$ by
Case 1 of the inductive step of Definition \ref{canonical:representation:definition}.

Suppose now that $a_1\not\in B\setminus B'$.
Then $\eta(a_1)=a_1$ by \eqref{eq:a_l':new}, so
$h'=\eta(a_1)=a_1=h\in U_i\subseteq V_i'$.
Therefore,
$h'=\eta(a_1)$ is a canonical representation of $h'\in V_i'$ by
Case 1 of the inductive step of Definition \ref{canonical:representation:definition}.

\smallskip
{\em Case 2\/}.  
$h=x\fprod u\fprod v\fprod x^{-1}$ 
for some $x\in \dash{X}$ and
$u,v\in V_{i+1}$. Consider arbitrary 
canonical representations of $u$ and $v$ as 
in \eqref{eq:u:v}, so that \eqref{eq:h:gappei} becomes a canonical representation of $h$.

The following claim holds by our inductive assumption.
\begin{claim}
\label{eq:u:v:prime}
$u'=\eta(b_1)\fprod \eta(b_2)\fprod\dots\fprod \eta(b_{m_1})$
and
$v'=\eta(c_1)\fprod \eta(c_2)\fprod\dots\fprod \eta(c_{m_2})$
are canonical representations of elements $u'\in V_{i+1}'$
and $v'\in V_{i+1}'$, respectively. 
\end{claim}

 \begin{claim}
\label{claim:h*}
$h^*=x\fprod u'\fprod v'\fprod x^{-1}\in V_i'$.
\end{claim}

\begin{proof}
Note that $\nice{V}'$ is a finite neighbourhood system for $F(X)$ by the assumptions of our lemma and Lemma \ref{enrichment:lemma}. In particular,
condition (3$_\nice{V}'$) of Definition \ref{SPSa} holds.
Therefore, 
$$
h^*=x\fprod u'\fprod v'\fprod x^{-1}\in x \fprod V_{i+1}'\fprod V_{i+1}'\fprod  x^{-1}\subseteq V_i',
$$
which implies that $h^*\in V_i'$.
\end{proof}

\begin{claim}
\label{claim:4:s}
$h^*=x\fprod 
\eta(b_1)\fprod \eta(b_2)\fprod\dots\fprod \eta(b_{m_1})
\fprod
\eta(c_1)\fprod \eta(c_2)\fprod\dots\fprod \eta(c_{m_2})
\fprod x^{-1}$ 
is a canonical representation of $h^*\in V_i'$.
\end{claim}
\begin{proof}
This follows from Claims \ref{eq:u:v:prime},  \ref{claim:h*}  and Case 2 of the inductive step of Definition \ref{canonical:representation:definition}.
\end{proof}

\begin{claim}
\label{primes:of:letters} 
$\eta(x)=x$ and $\eta(x^{-1})=x^{-1}$.
\end{claim}
\begin{proof}
  Indeed, if $x\not\in B\setminus B'$, then $\eta(x)=x$ by \eqref{eq:a_l':new}.
If $x \in B\setminus B'$, then $x=e$ by \eqref{eq:12:g},
so $\eta(x)=\eta(e)=e=x$ by \eqref{eq:a_l':new}.
Similarly,
if $x^{-1}\not\in B\setminus B'$, then $\eta(x^{-1})=x^{-1}$
by \eqref{eq:a_l':new}.
If $x^{-1} \in B\setminus B'$, then $x^{-1}=e$ by \eqref{eq:12:g},
so $\eta(x^{-1})=\eta(e)=e=x^{-1}$ by \eqref{eq:a_l':new}.
\end{proof}

Since \eqref{eq:h:gappei} is a canonical representation of $h$ that is now being considered, in order to finish the inductive step, it suffices to show that 
\begin{equation}
\label{eq:h:gappei:prime}
h'=\eta(x)\fprod 
\eta(b_1)\fprod \eta(b_2)\fprod\dots\fprod \eta(b_{m_1})
\fprod
\eta(c_1)\fprod \eta(c_2)\fprod\dots\fprod \eta(c_{m_2})
\fprod \eta(x^{-1})
\end{equation}
is a canonical representation of $h'\in V_i'$.
This follows from
Claims  
\ref{claim:4:s}, \ref{primes:of:letters} and
\eqref{eq:h:gappei:prime}, as $h'=h^*\in V_i'$. 
The inductive step is now complete.
\end{proof}

\begin{lemma}
\label{extension:set:c}
Let $X$ and $Y$ be sets such that $X\not=\emptyset$ and $X\subseteq Y$.  Let $\nice{U}=\{U_i:i\le n\}$ be a \fns\ for $F(X)$ and let $\nice{V}=\{V_i:i\le n\}$ be its 
cyclic $(Y\setminus X)$-enrichment
in $F(Y)$. Then
\begin{equation}
\label{sum:of:letters}
\sum_{l=1}^m |\lett(a_l)|\le |X|\cdot 4^{n-i}
\end{equation}
whenever $i \leq n$
and 
$h=a_1\fprod a_2\fprod\dots\fprod a_m$ is a
canonical representation of $h\in V_i$ as in Definition 
\ref{canonical:representation:definition}.
\end{lemma}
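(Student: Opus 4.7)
The plan is to proceed by finite reverse induction on $i=n,n-1,\dots,0$, mirroring the recursive construction of canonical representations in Definition~\ref{canonical:representation:definition}. Although that definition is stated for $B$-enrichments in $F(X)$, we apply it here to the enrichment built in $F(Y)$, so in its Case 2 the conjugating letter $x$ ranges over $\dash{Y}$ rather than $\dash{X}$; this causes no difficulty because any $x\in\dash{Y}$ satisfies $|\lett(x)|\le 1$.

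For the base case $i=n$, the canonical representation is $h=a_1$ with $m=1$, and we must show $|\lett(a_1)|\le |X|$. If $a_1=h\in U_n\subseteq F(X)$, this is immediate since $\lett(a_1)\subseteq X$. Otherwise $a_1\in\grp{y}$ for some $y\in Y\setminus X$, so $a_1=y^k$ for some integer $k$, and hence $|\lett(a_1)|\le 1\le |X|$ using $X\ne\emptyset$.

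For the inductive step, fix $i<n$ and assume the bound at level $i+1$. In Case 1 of Definition~\ref{canonical:representation:definition} we have $m=1$ and $a_1=h\in U_i\subseteq F(X)$, so $|\lett(a_1)|\le |X|\le |X|\cdot 4^{n-i}$. In Case 2, the canonical representation takes the form in~\eqref{eq:h:gappei}, obtained from canonical representations of some $u,v\in V_{i+1}$. Since $|\lett(x)|\le 1$ and $|\lett(x^{-1})|\le 1$, applying the inductive hypothesis to the two sub-representations of $u$ and $v$ yields
\begin{equation*}
\sum_{l=1}^m |\lett(a_l)| \,\le\, 1 + |X|\cdot 4^{n-i-1} + |X|\cdot 4^{n-i-1} + 1 \,=\, 2 + 2|X|\cdot 4^{n-i-1}.
\end{equation*}
The proof then closes with the arithmetic inequality $2+2|X|\cdot 4^{n-i-1}\le 4|X|\cdot 4^{n-i-1}=|X|\cdot 4^{n-i}$, which is equivalent to $1\le |X|\cdot 4^{n-i-1}$ and holds because $|X|\ge 1$ and $n-i-1\ge 0$.

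I anticipate no substantial obstacle; the argument is essentially dictated by the recursion. The only point worth noting is that the constant $|X|$ in the base of the bound is chosen precisely so that the two ``extra'' letters $x$ and $x^{-1}$ contributed at each recursion step are absorbed by the factor $4$ in passing from $4^{n-i-1}$ to $4^{n-i}$.
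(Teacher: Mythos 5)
Your proof is correct and follows essentially the same induction as the paper: verifying the base case $i=n$ by the dichotomy $a_1\in U_n$ versus $a_1\in\grp{y}$, and in the inductive step combining the two sub-bounds for $u,v$ with the contribution of the conjugating letters $x,x^{-1}$, absorbed by the factor $4$. The observation that in Case 2 the conjugating letter ranges over $\dash{Y}$, and the reduction of the final inequality to $1\le |X|\cdot 4^{n-i-1}$, match the paper's argument.
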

\begin{proof}
We shall prove 
the statement of our lemma
by finite reverse induction on $i = n, n-1, \dots, 0$.

First, we shall prove the statement of our lemma for $i=n$.
By Definition \ref{def:enrichment}, we have
\begin{equation}
\label{eq:def:Vn:Y:W}
V_n=U_n\cup \bigcup_{y\in Y\setminus X} \grp{y}.
\end{equation}
Fix $h\in V_n$. By Definition 
\ref{canonical:representation:definition},
$h=a_1$ is the unique canonical representation of $h$,
so in order to check \eqref{sum:of:letters}, it suffices to show
that $|\lett(h)|\le |X|$.
By \eqref{eq:def:Vn:Y:W}, we need to consider two cases.
If $h \in U_n$, then 
$\lett(h)\subseteq X$, as $U_n\subseteq F(X)$ by (1$_{\nice{U}}$),
so
$|\lett(h)|\le |X|$.
If $h \in \grp{y}$ for some $y \in \dash{Y}$, then $|\lett(h)| \leq 1 \leq |X|$ holds, as $X$ is non-empty. 

Suppose that $i<n$ and the statement of our lemma has already been proved for $i+1$. We shall prove the statement of our lemma for $i$.
Fix $h\in V_i$ and consider an arbitrary canonical representation
$h=a_1\fprod a_1\fprod \dots\fprod a_m$ of $h$.
By Definition 
\ref{canonical:representation:definition},
we need to consider two cases.

If $h\in U_i$, then $m=1$.
Since
$U_i\subseteq F(X)$ by (1$_{\nice{U}})$,
we have $\lett(a_1)=\lett(h)\subseteq X$,
which implies 
$|\lett(a_1)|\le |X|\le|X|\cdot 4^{n-i}$ because $i< n$.

Suppose now that 
there exist $x\in \dash{X}$, $u,v\in V_{i+1}$
and their canonical representations as in \eqref{eq:u:v}
such that
$m=m_1+m_2+2$,
$a_1=x$, $a_m=x^{-1}$,
$a_{1+s}=b_s$ for $1 \le s\le m_1$,
and
$a_{m_1+1+t}=c_t$ for $1 \le t\le m_2$.
Therefore,
\begin{equation}
\label{eq:15:k}
\sum_{l=1}^m |\lett(a_l)|=
|\lett(x)|+
\sum_{l=1}^{m_1} |\lett(b_l)|
+
\sum_{l=1}^{m_2} |\lett(c_l)|
+
|\lett(x^{-1})|.
\end{equation}

By inductive hypothesis, 
\begin{equation} \label{ind:hip:bound}
\sum_{l=1}^{m_1} |\lett(b_l)|\le  |X|\cdot 4^{n-(i+1)}
\text { and } 
\sum_{l=1}^{m_2} |\lett(c_l)|\le  |X|\cdot 4^{n-(i+1)}.
\end{equation}
Since $X$ is non-empty and $i+1\le n$,
we have 
\begin{equation}
\label{eq:17:u}
|\lett(x)|+|\lett(x^{-1})|=2\le 2|X|\cdot 4^{n-(i+1)}.
\end{equation}
Combining \eqref{eq:15:k}, \eqref{ind:hip:bound} and \eqref{eq:17:u},
we conclude that
$$
\sum_{l=1}^m |\lett(a_l)|\le 4 |X|\cdot 4^{n-(i+1)}
=
|X|\cdot 4^{n-i}.
$$ 
This finishes the inductive step.
\end{proof}

\section{Three auxiliary lemmas}
\label{sec:aux}

\begin{lemma}
\label{trivial:sandwich}
Let 
$X$ be a set.
Suppose that $j,n,s\in\N^+$,
$x_1,x_2,\dots,x_s\in X$,
$\varepsilon_1,\varepsilon_2,\dots,\varepsilon_s\in\{-1,1\}$,
\begin{equation}
\label{eq:g:1}
g_0=x_1^{\varepsilon_1}x_2^{\varepsilon_2}\dots x_s^{\varepsilon_s}\in F(X),
\end{equation}
$v_1,\dots,v_n, w_1,\dots,w_{n+1}\in F(X)$ and
\begin{equation}
\label{eq:h}
h^*=w_1\fprod v_1\fprod w_2\fprod v_2\fprod\dots\fprod w_n\fprod v_n\fprod w_{n+1}
\end{equation}
satisfy the following conditions:
\begin{itemize}
\item[(i)]
$x_j\not=x_p$ for $p=1,\dots,s$ with $j\not=p$;
\item[(ii)] 
$x_j\not\in \lett(w_i)$ for all $i=1,\dots,n+1$;
\item[(iii)] $x_j\not\in \lett(h^*)$;
\item[(iv)] for each $i=1,\dots,n$, 
either $v_i=g_0$ or $v_i=g_0^{-1}$.
\end{itemize}
Then $h^* = w^*$, where
\begin{equation}
\label{eq:w}
w^*=w_1\fprod \e{W}\fprod w_2\fprod\dots\fprod w_n\fprod \e{W}\fprod w_{n+1}
\end{equation}
is the word obtained from $h^*$ by replacing all $v_i$ in it with $\e{W}$.
\end{lemma}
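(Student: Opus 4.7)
The plan is to build a homomorphism $r\colon F(X)\to F(X)$ that annihilates $g_0$ while fixing every element whose reduced form avoids the letter $x_j$; applied to the factorization \eqref{eq:h}, this will immediately yield $h^*=w^*$.

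To construct $r$, I would invoke the universal property of free groups (the unnamed lemma preceding Definition~\ref{def:letters}). By condition~(i), the letter $x_j$ differs from each of the remaining letters appearing in \eqref{eq:g:1}, so the element
\[
t=\left(x_{j-1}^{-\varepsilon_{j-1}}\fprod\cdots\fprod x_1^{-\varepsilon_1}\fprod x_s^{-\varepsilon_s}\fprod\cdots\fprod x_{j+1}^{-\varepsilon_{j+1}}\right)^{\varepsilon_j}\in F(X)
\]
involves no $x_j$. Set $r(x_j)=t$ and $r(x)=x$ for every $x\in X\setminus\{x_j\}$, and extend to a homomorphism $r\colon F(X)\to F(X)$. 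A direct telescoping calculation from \eqref{eq:g:1} then gives $r(g_0)=e$, so condition~(iv) forces $r(v_i)=r(g_0)^{\pm 1}=e$ for each $i\le n$. Moreover, because $r$ fixes every generator in $X\setminus\{x_j\}$, it fixes every element $h\in F(X)$ with $x_j\notin\lett(h)$; condition~(ii) thus gives $r(w_i)=w_i$ for every $i\le n+1$, while condition~(iii) gives $r(h^*)=h^*$.

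Combining these facts, apply $r$ to \eqref{eq:h}:
\[
h^*=r(h^*)=r(w_1)\fprod r(v_1)\fprod r(w_2)\fprod\cdots\fprod r(v_n)\fprod r(w_{n+1})=w_1\fprod e\fprod w_2\fprod\cdots\fprod e\fprod w_{n+1}=w^*.
\]
The only mildly delicate ingredient is the correct choice of $r(x_j)$; this is exactly where condition~(i) is used, since the fact that $x_j$ occurs in $g_0$ exactly once lets us ``solve'' the relation $g_0=e$ for $x_j$ and thereby produce a well-defined retraction killing $g_0$. Everything else is a routine computation, in the same spirit as the retraction technique already used in the proof of Lemma~\ref{support:lemma}.
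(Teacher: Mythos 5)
Your proof is correct, and it takes a genuinely different route from the paper's. The paper argues by minimal counterexample: taking $h^*$ with $n$ smallest, it invokes (without detailed justification) the fact that cancellation of the solitary $x_j$ occurrences in the $v_i$'s must happen between a \emph{consecutive} pair $v_i$, $v_{i+1}$, forces $w_{i+1}=e$ and $v_i=v_{i+1}^{-1}$, and then shortens the product to contradict minimality. Your argument instead builds a single homomorphism $r\colon F(X)\to F(X)$ via the universal property --- sending $x_j$ to the word $t$ obtained by ``solving'' $g_0=e$ for $x_j^{\varepsilon_j}$, and fixing every other generator --- so that $r(g_0)=e$ while $r$ acts as the identity on anything whose reduced word avoids $x_j$. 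Applying $r$ to \eqref{eq:h} then collapses each $v_i$ to $e$, fixes each $w_i$ by (ii), and fixes $h^*$ itself by (iii), giving $h^*=r(h^*)=w^*$ in one line.

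What each approach buys: the paper's induction is self-contained at the level of words but leans on an unproved structural claim about how adjacent letters must cancel in a free-group product; your retraction argument sidesteps all cancellation combinatorics, needing only that $x_j$ occurs exactly once in $g_0$ (condition (i)) to make $t$ well-defined and $x_j$-free. Your method is also consistent with the paper's own style elsewhere --- it is the same ``evaluate a carefully chosen extension homomorphism'' device used in Lemma~\ref{support:lemma}, just with a retraction that kills $g_0$ rather than one that projects onto $\langle x_j\rangle$. In short, your proof is shorter, cleaner, and arguably fills a small gap in the paper's.

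One computational detail worth recording, which you asserted but didn't spell out: with $A=x_1^{\varepsilon_1}\cdots x_{j-1}^{\varepsilon_{j-1}}$ and $B=x_{j+1}^{\varepsilon_{j+1}}\cdots x_s^{\varepsilon_s}$, one has $t=(A^{-1}B^{-1})^{\varepsilon_j}$, hence $t^{\varepsilon_j}=A^{-1}B^{-1}$ regardless of the sign of $\varepsilon_j$, and therefore $r(g_0)=A\cdot t^{\varepsilon_j}\cdot B=A\cdot A^{-1}B^{-1}\cdot B=e$. That is the whole telescoping calculation; everything else in your argument is routine and correct.
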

\begin{proof}
We use the principle of minimal counter-example.
Suppose that the conclusion of our lemma fails.  Among all counter-examples to our lemma, we choose a counter-example $h^*$ 
for which the number $n$ is the smallest. The goal is to derive a contradiction from this assumption.

It follows from \eqref{eq:g:1}, (i) and (iv) that the variable
$x_j$ appears exactly once in each $v_i$ for $i=1,\dots,n$.
It follows from (ii) that  
the variable $x_j$ does not appear in any of 
$w_1,\dots,w_{n+1}$.
Since
$x_j\not\in \lett(h^*)$ by (iii), all appearances of the terms $x_j$
and $x_j^{-1}$ in \eqref{eq:h} cancel out as a result of computation in $F(X)$ on the right-hand side of \eqref{eq:h}.
Therefore, there exist $i=1,\dots,n-1$ and $\delta\in\{-1,1\}$ such that
the unique term $x_j^\delta$ in $v_i$ cancels in \eqref{eq:h} with the unique 
term $x_j^{-\delta}$ in $v_{i+1}$.  
From \eqref{eq:g:1}, (i) and (iv) one easily concludes that 
the words $v_i$ and $v_{i+1}$ are inverses of each other
and
$v_i\fprod w_{i+1}\fprod v_{i+1}=\e{F}$.
Therefore,
$w_{i+1}=v_i^{-1}\fprod v_{i+1}^{-1}=(v_{i+1}\fprod v_i)^{-1}=\e{F}^{-1}=\e{F}$,
and so
\begin{equation}
\label{eq:w*}
w=w_i\fprod w_{i+2}=w_i
\fprod \e{W}\fprod w_{i+2}=
w_i\fprod v_i\fprod w_{i+1}\fprod v_{i+1}
\fprod
w_{i+2}.
\end{equation}

\begin{claim}
\label{claim:h':m}
The product
\begin{equation}
\label{eq:h'}
h'=w_1\fprod v_1\fprod\dots\fprod w_{i-1}\fprod v_{i-1}\fprod w \fprod v_{i+2}\fprod w_{i+3}\fprod \dots\fprod w_n\fprod v_n\fprod w_{n+1}
\end{equation}
satisfies conditions (i)--(iv) of our lemma, after an obvious re-labeling of its elements.
\end{claim}
\begin{proof}
Indeed, conditions (i) and (iv) are not affected by the change from $h^*$ to $h'$.

Since $\lett(w)\subseteq \lett(w_i)\cup\lett(w_{i+2})$ by the first equation in \eqref{eq:w*} and
Lemma \ref{letters:and:products}(i), 
from item (ii) we conclude that $x_j\not\in \lett(w)$. 
This shows that the representation of $h'$ in \eqref{eq:h'}
satisfies item (ii) of our lemma.

It remains only to check condition (iii).
From 
\eqref{eq:h}, \eqref{eq:w*}
  and
\eqref{eq:h'},
we obtain that 
\begin{equation}
\label{eq:h:h'}
h^*=h'.
\end{equation}
Since $x_j\not\in \lett(h^*)$ by (iii), from \eqref{eq:h:h'} we conclude that $x_j\not\in \lett(h')$.
\end{proof}
  
Since $h'$ 
is ``shorter'' than the minimal counter-example $h^*$ to 
Lemma \ref{trivial:sandwich},
from Claim \ref{claim:h':m} we conclude that
\begin{equation}
\label{eq:w':h'}
h'=w',
\end{equation}
where
\begin{equation}
\label{eq:w'}
w'=w_1\fprod e\fprod \dots\fprod w_{i-1}\fprod e\fprod  w\fprod e\fprod w_{i+3}\fprod \dots\fprod w_n\fprod e\fprod w_{n+1}
\end{equation}
is the word obtained from $h'$ by replacing all $v_i$ in it with $e$.

From \eqref{eq:w}, \eqref{eq:w*}
and \eqref{eq:w'}, we get
$w^*=w'$.
Combining this with
\eqref{eq:h:h'} and
\eqref{eq:w':h'},
we deduce that
$h^*=w^*$.
However, this contradicts the assumption that $h^*$ is a counter-example to our lemma.
\end{proof}

\begin{lemma}
\label{lemma:7.2}
Let 
$X$ be a set.
Suppose that $j,m, s\in\N^+$,
$x_1,x_2,\dots,x_s\in X$,
$\varepsilon_1,\varepsilon_2,\dots,\varepsilon_s\in\{-1,1\}$,
\begin{equation}
\label{eq:g:2}
g_0=x_1^{\varepsilon_1}x_2^{\varepsilon_2}\dots x_s^{\varepsilon_s}\in F(X),
\end{equation}
$a_1,a_2,\dots,a_m\in F(X)$
and
\begin{equation}
\label{eq:def:L}
L=\{l=1,\dots,m: x_j\in\lett(a_l)\}
\end{equation}
satisfy the following conditions:
\begin{itemize}
\item[(a)]
$x_j\not=x_p$ for $p=1,\dots,s$ with $j\not=p$;
\item[(b)] 
$x_j\not\in \lett(a_1\fprod a_2\fprod\dots\fprod a_m)$;
\item[(c)] 
$a_l\in\grp{g_0}\setminus\{e\}$
for each $l\in L$.
\end{itemize}
Let $\eta:F(X)\to F(X)$ be the map which sends each element of the cyclic group $\grp{g_0}$ to $e$ and does not move elements of its complement $F(X)\setminus \grp{g_0}$.
Then 
\begin{equation}
\label{eq:33:z}
a_1\fprod a_2\fprod\dots\fprod a_m
=
\eta(a_1)\fprod \eta(a_2)\fprod\dots\fprod \eta(a_m).
\end{equation}  
\end{lemma}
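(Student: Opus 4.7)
The plan is to reduce the identity \eqref{eq:33:z} to Lemma~\ref{trivial:sandwich} by breaking each power $a_l = g_0^{k_l}$ with $l \in L$ into $|k_l|$ factors of $g_0^{\pm 1}$ and then invoking the sandwich--cancellation principle proved there.

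First I would verify that $\eta$ acts as the identity on $a_l$ for every $l \notin L$. For such an $l$ the definition of $L$ in \eqref{eq:def:L} gives $x_j \notin \lett(a_l)$, so the contrapositive of Lemma~\ref{support:lemma} (applied to $g_0$ with the chosen $j$ and using hypothesis (a)) rules out $a_l \in \grp{g_0} \setminus \{e\}$; thus either $a_l = e$ or $a_l \notin \grp{g_0}$, and in both cases $\eta(a_l) = a_l$. For $l \in L$, hypothesis (c) gives $a_l \in \grp{g_0}$ and hence $\eta(a_l) = e$. Consequently $\eta(a_1) \fprod \cdots \fprod \eta(a_m)$ is precisely the product obtained from $a_1 \fprod \cdots \fprod a_m$ by replacing each factor indexed in $L$ by $e$.

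Next, assuming $L \neq \emptyset$ (the case $L = \emptyset$ makes \eqref{eq:33:z} trivial), I would enumerate $L = \{l_1 < \cdots < l_n\}$ and write $a_{l_i} = g_0^{k_i}$ with $k_i \neq 0$. I would split $a_1 \fprod \cdots \fprod a_m$ as
\[
A_0 \fprod a_{l_1} \fprod A_1 \fprod a_{l_2} \fprod \cdots \fprod a_{l_n} \fprod A_n,
\]
where each block $A_i$ is the (possibly empty) product of consecutive $a_l$'s with $l \notin L$; then I would expand each $a_{l_i}$ as $|k_i|$ copies of $g_0^{\mathrm{sgn}(k_i)}$, inserting $e$ between consecutive copies coming from the same $a_{l_i}$. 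The whole product then assumes the form $w_1 \fprod v_1 \fprod w_2 \fprod v_2 \fprod \cdots \fprod w_N \fprod v_N \fprod w_{N+1}$ with $N = \sum_{i=1}^n |k_i| \ge 1$, each $v_t \in \{g_0, g_0^{-1}\}$, and each $w_t$ equal either to $e$ or to one of the blocks $A_i$. Because $\lett(A_i) \subseteq \bigcup \{\lett(a_l) : l \notin L\}$ by Lemma~\ref{letters:and:products}(ii), none of the $w_t$ contains $x_j$, so hypothesis (ii) of Lemma~\ref{trivial:sandwich} holds; hypothesis (iii) is exactly our (b), while (i) and (iv) are immediate.

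Applying Lemma~\ref{trivial:sandwich} then replaces every $v_t$ by $e$ without altering the product, so the expression collapses to $A_0 \fprod A_1 \fprod \cdots \fprod A_n$, which by the first step equals $\eta(a_1) \fprod \cdots \fprod \eta(a_m)$. The main obstacle, which is more bureaucratic than conceptual, is organizing the expansion so that it fits the exact template of Lemma~\ref{trivial:sandwich}; the substantive content is the identification, via Lemma~\ref{support:lemma}, of $L$ with the indices where $a_l \in \grp{g_0} \setminus \{e\}$, which is what makes $\eta$ interact cleanly with the cancellation.
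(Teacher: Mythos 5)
Your proposal is correct and is essentially the same argument as the paper's: both proofs expand each $a_l$ with $l\in L$ into $|q_l|$ alternating copies of $g_0^{\pm 1}$ (with identities interleaved), reorganize the product into the alternating template $w_1\fprod v_1\fprod w_2\fprod\cdots\fprod w_n\fprod v_n\fprod w_{n+1}$, check hypotheses (i)--(iv) of Lemma~\ref{trivial:sandwich} (using Lemma~\ref{letters:and:products}(ii) to control $\lett(w_i)$ and Lemma~\ref{support:lemma} to identify $L$ with the set of indices where $\eta$ moves $a_l$), and then apply Lemma~\ref{trivial:sandwich} to delete the $v$'s. The paper routes the bookkeeping through an intermediate formal sequence $b_1,\dots,b_t$ and a set $M$ of positions, whereas you group the $l\notin L$ factors into blocks $A_0,\dots,A_n$ upfront, but these are cosmetic differences in organizing the same decomposition.
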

\begin{proof}
Suppose first that $L=\emptyset$.
Then $x_j\not\in \lett(a_1)\cup \lett(a_2)\cup\dots\cup\lett(a_m)$
by 
\eqref{eq:def:L}.
On the other hand, 
$x_j\in \lett(h)$ for every $h\in\grp{g_0}\setminus \{\e{F}\}$ by Lemma \ref{support:lemma}.
Therefore, $a_l\not\in \grp{g_0}\setminus \{\e{F}\}$ for every 
$l=1,\dots,m$. By our definition of $\eta$, this means that
$\eta(a_l)=a_l$ for all $l=1,\dots,m$.
Thus, \eqref{eq:33:z} holds in this case.
In the rest of the proof we assume that $L\not=\emptyset$.

For every $l\in L$, use item (c) to fix a non-zero integer $q_l$ such that
$a_l=g_0^{q_l}$ and define 
$\delta_l=q_l/|q_l|\in\{-1,1\}$.
It is clear that
\begin{equation}
\label{powers:of:g_0}
a_l=g_0^{\delta_l}\fprod e\fprod g_0^{\delta_l}\fprod e\fprod\dots\fprod e\fprod g_0^{\delta_l},
\end{equation}
where the term $g_0^{\delta_l}$ appears $|q_l|$-many times in \eqref{powers:of:g_0}.
Note that $\eta(e)=e$.
Since $g_0^{q_l},g_0^{\delta_l}\in\grp{g_0}$, we have $\eta(g_0^{q_l})=\eta(g_0^{\delta_l})=e$, and so
\begin{equation}
\label{eta:of:a_l}
\eta(a_l)=\eta(g_0^{q_l})=e=
\eta(g_0^{\delta_l})\fprod \eta(e)\fprod \eta(g_0^{\delta_l})\fprod \eta(e)\fprod\dots\fprod \eta(e)\fprod \eta(g_0^{\delta_l}).
\end{equation}

Let 
$b_1\fprod b_2\fprod \dots\fprod b_t$ be the formal expression obtained from the product $a_1\fprod a_2\dots\fprod a_m$
by replacing in it 
every element $a_l$ 
for $l\in L$ with the formal expression 
on the right-hand side of \eqref{powers:of:g_0}.
Clearly,
\begin{equation}
\label{eq:def:h^*}
a_1\fprod a_2\fprod \dots \fprod a_m
=b_1\fprod b_2\fprod\dots\fprod b_t.
\end{equation}
Since
\eqref{eta:of:a_l}
holds for every $l\in L$,
we obtain
\begin{equation}
\label{eq:39:j}
\eta(a_1)\fprod \eta(a_2)\fprod\dots\fprod \eta(a_m)
=
\eta(b_1)\fprod \eta(b_2)\fprod\dots\fprod \eta(b_t).
\end{equation}

It follows from our definition of $b_r$'s that
\begin{equation}
\label{eq:b_r}
b_r\in\{e, g_0, g_0^{-1}\}\cup \{a_l: l\in \{1,\dots,m\}\setminus L\}
\ 
\text{ for every }
\
r=1,\dots, t.
\end{equation}

Since $L\not=\emptyset$, we can select $l\in L$.
Then $a_l$ has the form as in \eqref{powers:of:g_0}, and so 
$g_0^{\delta_l}$ appears as one of $b_r$'s.
This shows that 
the set
\begin{equation}
\label{eq:def:M}
M=\{r=1,\dots, t: 
\text{either }
b_r=g_0
\text{ or }
b_r=g_0^{-1}\}
\end{equation}
is non-empty, so we can fix an enumeration
$M=\{r_i: i\le n\}$
  of $M$ such that 
$r_1<r_2<\dots<r_n$. For $i=1,\dots,n$, define
$
v_i=b_{r_i}.
$

If $r_1=1$, we let $w_1=e$; otherwise, we let
$w_1=b_1\fprod \dots \fprod b_{r_1-1}$. Similarly, if 
$r_n=t$, we let $w_{n+1}=e$; otherwise, we let
$w_{n+1}=b_{r_n+1}\fprod\dots\fprod b_t$.
For $i=2,\dots,n$, we let
$w_i=b_{r_{i-1}+1}\fprod \dots\fprod b_{r_i-1}$.
It follows from 
our definition of $v_i$ and $w_i$ that
\begin{equation}
\label{eq:40:u}
b_1\fprod b_2\dots\fprod b_t=
w_1\fprod v_1\fprod w_2\fprod v_2\fprod\dots\fprod w_n\fprod v_n\fprod w_{n+1}.
\end{equation}

Let $h^*$ be the element defined in \eqref{eq:h}. Note that
\begin{equation}
\label{eq:41:g}
a_1\fprod a_2\fprod \dots\fprod a_m=h^*
\end{equation}
by
\eqref{eq:h}, \eqref{eq:def:h^*} and \eqref{eq:40:u}.

\begin{claim}
$h^*$
satisfies conditions (i)--(iv) of
Lemma \ref{trivial:sandwich}. 
\end{claim}
\begin{proof}
Condition (i) of Lemma \ref{trivial:sandwich} coincides with condition (a) of our lemma.

(ii)
Let $i=1,\dots,n+1$ be arbitrary.
Combining our definition of $w_i$ with 
\eqref{eq:b_r} and \eqref{eq:def:M}, 
we conclude that $w_i$ is a finite product of elements
of the set 
$\{e\}\cup \{a_l: l\in \{1,\dots,m\}\setminus L\}$.
Since $\lett(e)=\emptyset$ and $x_j\not\in \lett(a_l)$ for $l\in \{1,\dots,m\}\setminus L$ by \eqref{eq:def:L},
applying Lemma \ref{letters:and:products}(ii) we obtain that $x_j\not\in\lett(w_i)$.

(iii) 
Note that 
$x_j\not\in \lett(h^*)$ by \eqref{eq:41:g} and (b).

(iv)
If
$i=1,\dots,n$, then $v_i=b_{r_i}\in M$ by the choice of our enumeration of $M$, so \eqref{eq:def:M} implies that
either $v_i=g_0$ or $v_i=g_0^{-1}$. 
\end{proof}

Applying Lemma \ref{trivial:sandwich}, we conclude that
\begin{equation}
\label{eq:42:k}
h^*=w_1\fprod \e{W}\fprod w_2\fprod\dots\fprod w_n\fprod \e{W}\fprod w_{n+1}.
\end{equation}

Note that $\eta(b_r)=e$ for every $r\in M$.
Suppose now that $r\in\{1,\dots,t\}\setminus M$.
Since $x_j\in \lett(g)$ for every $g\in\grp{g_0}\setminus \{e\}$
and $x_j\not\in \lett(b_r)$, it follows that $b_r\not\in\grp{g_0}\setminus \{e\}$. Therefore,
$\eta(b_r)=b_r$.
This argument and our definition of $w_r$'s implies that
\begin{equation}
\label{eq:43:j}
w_1\fprod \e{W}\fprod w_2\fprod\dots\fprod w_n\fprod \e{W}\fprod w_{n+1}
=
\eta(b_1)\fprod \eta(b_2)\fprod\dots\fprod \eta(b_t).
\end{equation}
Now \eqref{eq:33:z} follows from 
\eqref{eq:41:g}, 
\eqref{eq:42:k},
\eqref{eq:43:j} 
and \eqref{eq:39:j} (in this order). 
\end{proof}

\begin{lemma} \label{main:lemma:assgp}
Assume that  $X$ is a non-empty finite set,  $g\in F(X)$,
$\nice{U} = \{U_i : i \leq n\}$ is a \fns\ for $F(X)$ and $Y$ is a finite set containing $X$ satisfying the inequality
$|Y\setminus X|>|X|\cdot 4^{n}$.
Then
there exists 
a  
\fns\ $\nice{V} = \{V_i : i \leq n\}$ 
for $F(Y)$
extending $\nice{U}$ such that $g \in  \grp{\Cyc(V_n)}$.
\end{lemma}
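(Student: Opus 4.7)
The plan is to construct $\nice{V}$ as a suitable $B$-enrichment of $\nice{U}$ in $F(Y)$ and then verify that $\nice{V}$ is a \fns, that it extends $\nice{U}$, and that $g\in\grp{\Cyc(V_n)}$. If $g=e$ we simply take $\nice{V}=\nice{U}_Y$; Corollary~\ref{extension:set:b} gives the extension, and $e\in\Cyc(V_n)$. Otherwise, we write $g=x_1^{\varepsilon_1}\cdots x_s^{\varepsilon_s}$ in reduced form and use the hypothesis $|Y\setminus X|>|X|\cdot 4^n$ to pick two distinct letters $y_0,y_1\in Y\setminus X$, setting
\begin{equation*}
h_1 = y_0^{-1}\fprod g\fprod y_1 \in F(Y).
\end{equation*}
This word is irreducible, and each of $y_0,y_1$ occurs in it at exactly one position. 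We put $B=\bigcup_{y\in Y\setminus X}\grp{y}\cup\grp{h_1}$; this is symmetric, so by Corollary~\ref{cor:1} the $B$-enrichment $\nice{V}$ of $\nice{U}$ in $F(Y)$ is a \fns\ for $F(Y)$. The direct identity $g=y_0\fprod h_1\fprod y_1^{-1}$, together with $\grp{y_0},\grp{y_1},\grp{h_1}\subseteq V_n$, exhibits $g$ as a product of three elements of $\Cyc(V_n)$, yielding $g\in\grp{\Cyc(V_n)}$.

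The delicate property is the extension $V_i\cap F(X)=U_i$. We introduce the auxiliary system $\nice{V}'$ that is the cyclic $(Y\setminus X)$-enrichment of $\nice{U}$ in $F(Y)$ (namely the $B'$-enrichment with $B'=\bigcup_{y\in Y\setminus X}\grp{y}$); by Corollary~\ref{extension:set:b}, $V_i'\cap F(X)=U_i$ for every $i$. Given $h\in V_i\cap F(X)$ with a canonical representation $h=a_1\fprod\cdots\fprod a_M$ in $\nice{V}$, let $\eta:F(Y)\to F(Y)$ denote the map sending every element of $\grp{h_1}\setminus\{e\}$ to $e$ and fixing everything else. Lemma~\ref{reduction:lemma} then produces a canonical representation $\eta(a_1)\fprod\cdots\fprod\eta(a_M)$ of some element $h'\in V_i'$, while Lemma~\ref{lemma:7.2}, applied with $g_0=h_1$ and unique letter $x_j=y_0$ (using $y_0\notin\lett(h)$, automatic because $h\in F(X)$), identifies $h$ with $h'$. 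So $h=h'\in V_i'\cap F(X)=U_i$, and the reverse inclusion $U_i\subseteq V_i\cap F(X)$ is immediate.

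The main obstacle will be discharging the hypotheses of the two lemmas just invoked. For Lemma~\ref{reduction:lemma} the content reduces to $\grp{h_1}\cap V_i'\subseteq\{e\}$ for every $i\le n$, since $(B\setminus B')\cap\bar{Y}\subseteq\{e\}$ is immediate from the fact that every nontrivial power of $h_1$ involves more than one distinct letter. The base case $i=n$ is straightforward: no nontrivial $h_1^m$ can lie in $U_n\subseteq F(X)$ (as $y_0\in\lett(h_1^m)$ by Lemma~\ref{support:lemma}) or in any single $\grp{y}$ (as $h_1^m$ involves both $y_0$ and $y_1$). For $i<n$ we argue by reverse induction, exploiting the letter-count bound $\sum|\lett(a_l)|\le|X|\cdot 4^{n-i}$ from Lemma~\ref{extension:set:c}: the hypothesis $|Y\setminus X|>|X|\cdot 4^n$ ensures that the letter inventory any canonical representation inside $\nice{V}'$ can draw upon is strictly smaller than the available pool of new letters, which, combined with the forced occurrences of $y_0$ and $y_1$ in each $h_1^m$, prevents $h_1^m$ from being assembled as a nested conjugate decomposition in $\nice{V}'$. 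Hypothesis~(c) of Lemma~\ref{lemma:7.2} is likewise met by a careful choice of canonical representation of $h$: since $y_0\notin\lett(h)$, any $y_0$-bearing factors coming from conjugators or from $\grp{y_0}$-bases must appear in cancelling pairs, and we can reorganise the representation so that the only $a_l$ whose letter set contains $y_0$ are nontrivial powers of $h_1$, completing the argument.
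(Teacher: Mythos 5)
Your construction departs from the paper's in a way that breaks the key quantitative step. You set $h_1 = y_0^{-1}\fprod g\fprod y_1$ using only \emph{two} fresh letters, whereas the paper sets $g_0 = (y_1 y_2 \cdots y_k)\fprod g$ using \emph{all} $k = |Y\setminus X|$ of them. This difference is not cosmetic; it is precisely what makes the two most delicate steps of the proof go through, and your proposal does not repair the damage.

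First, consider the claim $\grp{h_1}\cap V_i' \subseteq \{e\}$, which you need in order to invoke Lemma~\ref{reduction:lemma} (condition \eqref{eq:12:g}). The paper's argument is purely quantitative: any nontrivial power $g_0^p$ satisfies $|\lett(g_0^p)| \geq k$ by \eqref{support:of:g0}, while every element of $V_i'$ has a canonical representation whose total letter inventory is at most $|X|\cdot 4^{n-i} \le |X|\cdot 4^n < k$ by Lemma~\ref{extension:set:c}; hence $g_0^p \notin V_i'$. With your $h_1$, a nontrivial power $h_1^p$ has $|\lett(h_1^p)| \leq |\lett(g)| + 2 \leq |X| + 2$, which is generally \emph{far below} the bound $|X|\cdot 4^{n-i}$, so the count produces no contradiction. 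Your paragraph asserting that the letter inventory is ``strictly smaller than the available pool of new letters'' is true but irrelevant: it does not prevent $y_0$ and $y_1$ themselves from appearing in a canonical representation inside $\nice{V}'$. You have no proof of the needed disjointness.

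Second, and even more seriously, consider hypothesis~(c) of Lemma~\ref{lemma:7.2}. You want to apply that lemma with $g_0 = h_1$ and $x_j = y_0$, so you need every $a_l$ in the canonical representation of $h$ whose letter set contains $y_0$ to be a nontrivial power of $h_1$. But the factors $a_l$ of a canonical representation in $\nice{V}$ include conjugating letters $x\in\dash{Y}$ (which may be $y_0$ or $y_0^{-1}$) and base-level elements of $B' = \bigcup_{y\in Y\setminus X}\grp{y}$ (which may be arbitrary powers of $y_0$). None of these lie in $\grp{h_1}\setminus\{e\}$, yet all have $y_0$ in their letter set. Your appeal to ``reorganising the representation'' so that such factors disappear is not justified and has no basis in Definition~\ref{canonical:representation:definition}: canonical representations are built by a fixed recursive scheme, and there is no guarantee that a $y_0$-free representation exists. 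The paper avoids this entirely via Claim~\ref{assumptions:satisfied}: since $\left|\bigcup_{l}\lett(\eta(a_l))\right| < k$ and $g_0$ involves all $k$ new letters, one can choose the index $j$ \emph{after} fixing the canonical representation, so that $y_j$ does not appear in any $\eta(a_l)$; this is what makes $L = M$ and forces~(c). With only two new letters in $h_1$, no such post hoc choice is available, since $y_0$ and $y_1$ might both appear as conjugators in the given representation.

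In short, the hypothesis $|Y\setminus X| > |X|\cdot 4^n$ exists in the statement precisely because the witness $g_0$ must incorporate \emph{all} the surplus letters: the cardinality gap is exploited twice, once to kill $\grp{g_0}\cap V_i'$ and once to select a free letter $y_j$. Your two-letter $h_1$ sacrifices both uses, and the gap is not recoverable by the arguments sketched.
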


\begin{proof}
Let 
$|Y\setminus X|=k$. By the assumption of our lemma, we have
\begin{equation}
\label{eq:k}
k>  |X|\cdot 4^{n}.
\end{equation}
Fix faithful enumeration
$Y\setminus X = \{y_1, \dots, y_k\}$  of 
the set $Y\setminus X$
and
define
\begin{equation} 
\label{g0:def} 
g_0=(y_1 y_2\dots y_k)\fprod g.
\end{equation}
Note that the product in \eqref{g0:def} does not undergo any cancellations, as $\lett(g)\subseteq X$ and 
the set $\{y_1, \dots, y_k\}=Y\setminus X$ is disjoint from $X$.
For the same reason, for every non-zero integer $p$, 
the power 
$g_0^p$ of $g_0$ does not undergo any cancellations as well.
Therefore,
\begin{equation}
\label{support:of:g0}
\{y_1,y_2,\dots,y_k\}\subseteq \lett(g_0^p)
\
\text{ for every non-zero integer }
p.
\end{equation}

Since $X\subseteq Y$, we have $F(X)\subseteq F(Y)$. Therefore, 
the finite sequence $\nice{U}$ satisfies conditions (a)--(c) of Lemma \ref{enrichment:lemma}, where in condition (a) one has to replace $X$ by $Y$.
Define 
\begin{equation}
\label{eq:34:j}
B'=\bigcup_{y\in Y\setminus X} \grp{y}
\
\text{ and }
\ 
B=B'\cup \grp{g_0}. 
\end{equation}
Clearly, $B'\subseteq B\subseteq F(Y)$ and 
$B'$ is symmetric.

\begin{claim}
\label{extension:claim:1}
The $B'$-enrichment
$\nice{V}'=\{V_i':i\le n\}$ of $\nice{U}$ in $F(Y)$ is a \fns\ for $F(Y)$ extending $\nice{U}$.
\end{claim}
\begin{proof}
From the first equation in \eqref{eq:34:j}, 
Definition \ref{def:enrichment}(ii) and definition of $\nice{V}'$, it follows that $\nice{V}'$ coincides with the cyclic $(Y\setminus X)$-enrichment $\nice{U}_Y$ of 
$\nice{U}$.
Now the conclusion of our claim follows from
Corollaries \ref{cor:2} and \ref{extension:set:b}.
\end{proof}

\begin{claim}
\label{claim:8:k}
The $B$-enrichment
$\nice{V}=\{V_i:i\le n\}$ of $\nice{U}$ in $F(Y)$ is a \fns\ for $F(Y)$.
\end{claim}
\begin{proof}
It follows from \eqref{eq:34:j}
that $B^{-1}=B$. Now the conclusion of our claim follows from Lemma \ref{enrichment:lemma} (in which one has to replace $X$ by $Y$).
\end{proof}

\begin{claim} \label{first:claim} 
$g \in \grp{\Cyc(V_n)}$.
\end{claim}  
\begin{proof}
Since $Y\setminus X = \{y_1, \dots, y_k\}$, 
from \eqref{eq:34:j} we conclude that
$\grp{y_i^{-1}}= \grp{y_i} \subseteq B$
for all $i = 1, \dots, k$. 
Similarly,
$\grp{g_0}\subseteq B$ by \eqref{eq:34:j}.
Since 
$\nice{V}$ is the $B$-enrichment of $\nice{U}$,
from 
equation \eqref{eq:def:Vn} of Definition \ref{def:enrichment},
we conclude that
$B\subseteq V_n$.
This implies that $y_1^{-1}, \dots, y_k^{-1}, g_0 \in \Cyc(V_n)$.
From this and \eqref{g0:def}, we get
$g=y_k^{-1}\fprod y_{k-1}^{-1}\fprod\dots\fprod y_1^{-1}\fprod g_0\in \grp{\Cyc(V_n)}$.
\end{proof}

\begin{claim}
The map $\eta:F(X)\to F(X)$ defined in Lemma \ref{lemma:7.2}
coincides with the map $\eta$ defined in 
Lemma \ref{reduction:lemma}.
\end{claim}
\begin{proof}
From \eqref{eq:34:j} we get $B\setminus B'=\grp{g_0}\setminus \{e\}$. The conclusion of our claim follows from this observation and our definitions of both maps.
\end{proof}

In the rest of the proof we shall denote both of the maps from the above claim by $\eta$.

\begin{claim}
\label{claim:11:d}
$(B\setminus B')\cap \left(\bar{Y}\cup\bigcup_{i=1}^n V_i'\right)=\emptyset$.
\end{claim}
\begin{proof}
Suppose that $h\in (B\setminus B')\cap \left(\bar{Y}\cup\bigcup_{i=1}^n V_i'\right)$. 
From $h\in B\setminus B'$ and \eqref{eq:34:j}, we conclude that
$h\in \grp{g_0}\setminus\{0\}$, 
so $h=g_0^p$ for some non-zero integer $p$.
Then
$|\lett(h)|\ge |Y\setminus X|=k$ by \eqref{support:of:g0}.
Note that $h\not\in\dash{Y}$, as $|\lett(y)|\leq 1<k$ for every $y\in \dash{Y}$.  
Since   $h\in \bar{Y}\cup\bigcup_{i=1}^n V_i'$, it follows that 
$h\in V_i'$ for some $i\le n$.
Since $\nice{V}'$ is the $B'$-enrichment of $\nice{U}$,
the element $h\in V_i'$ has a canonical representation
\begin{equation}
\label{h:representation}
h=a_1\fprod a_2\fprod \dots\fprod a_m.
\end{equation}
Then 
$\lett(h)\subseteq \bigcup_{l=1}^m \lett(a_l)$ by 
\eqref{h:representation} and Lemma \ref{letters:and:products}(ii).
Furthermore,
$$
|\lett(h)|\le \left|\bigcup_{l=1}^m \lett(a_l)\right|\le\sum_{l=1}^m |\lett(a_l)|\le |X|\cdot 4^{n-i}\le |X|\cdot 4^n<k
$$
by Lemma \ref{extension:set:c} and \eqref{eq:k},
in contradiction with $|\lett(h)|\ge k$.
\end{proof}

\begin{claim} \label{claim:12}
Suppose $h\in V_i\cap F(X)$ has a canonical representation 
 \eqref{h:representation}. Then \begin{equation} \label{eq:48:h}
 h' = 
 \eta(a_1) \cdot \eta(a_2) \cdot \cdots \cdot \eta(a_m)
 \end{equation}  is a canonical representation of 
 $h' \in V_i'$
satisfying 
 the inequality
 $\left|\bigcup_{l=1}^m \lett(\eta(a_l))\right| <k$.
\end{claim}

\begin{proof}
Let $h\in V_i\cap F(X)$ be arbitrary.
From Claim \ref{claim:11:d}, we conclude that \eqref{eq:12:g} holds (with $X$ replaced by $Y$). Therefore, all the assumptions of Lemma 
\ref{reduction:lemma} are satisfied (with $X$ replaced by $Y$ in this lemma).
This implies that $h'= \eta(a_1) \cdot \eta(a_2) \cdot \cdots \cdot \eta(a_m)$ is a canonical representation of $h' \in V_i'$. 
Finally, 
$$
\left|\bigcup_{l=1}^m \lett(\eta(a_l))\right| \le |X|\cdot 4^{n-i}\le |X|\cdot 4^n<k
$$ 
by 
Lemma \ref{extension:set:c} and \eqref{eq:k}. 
\end{proof}

\begin{claim} \label{claim:13}
If $h\in V_i\cap F(X)$ has a canonical representation 
 \eqref{h:representation}
and  $a_l \neq \eta(a_l)$ for
some $l  = 1, \dots, m$,
then $a_l \in \grp{g_0} \setminus \{e\}$. 
\end{claim}

\begin{proof}
Suppose that $a_l \neq \eta(a_l)$
for some $l  = 1, \dots, m$.
By \eqref{eq:a_l':new}, this implies that $a_l \neq e$ and furthermore $a_l \in B \setminus B'$. Finally, $B \setminus B' \subseteq \grp{g_0}$ by  \eqref{eq:34:j}. This establishes that $a_l \in \grp{g_0} \setminus \{e\}$. 
\end{proof}

\begin{claim}
\label{assumptions:satisfied}
If $h\in V_i\cap F(X)$ has a canonical representation 
 \eqref{h:representation},
then
elements $g_0$, $a_1,\dots,a_m\in F(X)$ 
satisfy all assumptions of
Lemma \ref{lemma:7.2}.
\end{claim}
\begin{proof}

Since $\left|\bigcup_{l=1}^m \lett(\eta(a_l))\right| <k$ by
 Claim \ref{claim:12},
we can use 
\eqref{g0:def} to choose some variable $y_j \in \{y_1, \dots, y_k \}$  such that $y_j \not \in \lett(\eta(a_l))$ for all $l = 1, \dots m$.
Let
\begin{equation} 
\label{33:satisfied}
L = \{l = 1, \dots, m: y_j \in \lett(a_l)\}
\end{equation}
and
\begin{equation}
M=\{ l = 1, \dots,m : a_l \neq \eta(a_l) \}.
\end{equation}

By \eqref{support:of:g0}, we have $y_j \in \lett(w)$
for every $w \in \grp{g_0} \setminus \{e\}$. This implies that $M \subseteq L$ by Claim \ref{claim:13}.
To show the reverse inclusion, let us suppose that 
$l\in L$. Then
$y_j \in \lett(a_l)$ by \eqref{33:satisfied}.
Since $y_j \not \in \lett(\eta(a_l))$ by our choice of $a_l$, this implies that $a_l \neq \eta(a_l)$; that is, $l\in M$. 
This shows that $L\subseteq M$.
From these two inclusions, we get 
$L=M$.

We are now ready to check the assumptions (a)--(c) of Lemma \ref{lemma:7.2} with $y_j$ taken as $x_j$.

(a) 
Suppose that \eqref{eq:g:2} holds.
First, observe that $y_j \not \in lett(g_0)$ since $g_0 \in F(X)$ and $y_j \in Y \setminus X$. Next, for every $i = 1, \dots, k$ we have that $y_j \neq y_i$ if $i \neq j$ by our faithful enumeration of $Y \setminus X$. Recalling
\eqref{g0:def}, we conclude that (a) holds.

(b) Recall that $h \in V_i \cap F(X)$ by hypothesis, so 
$\lett(h)\subseteq X$.
Since $y_j \in Y \setminus X$, we have 
$y_j \not \in \lett(h)$. It remains only to note that $h=a_1\fprod a_2\fprod\dots\fprod a_m$.

(c) Given $a_l$ with $l \in L$, we have $a_l \neq \eta(a_l)$, as $L=M$.
Now $a_l \in \grp{g_0} \setminus \{e\}$
by Claim \ref{claim:13}.
\end{proof}

\begin{claim}
\label{Vi:claim}
$V_i\cap F(X)\subseteq U_i$ for every $i\le n$.
\end{claim}
\begin{proof}
Let $h\in V_i\cap F(X)$ be an arbitrary element, and let \eqref{h:representation} be one of its canonical representations. Claim \ref{assumptions:satisfied} allows us to make use of Lemma \ref{lemma:7.2} to show that $h = \eta(a_1)\fprod \eta(a_2) \fprod\dots\fprod \eta(a_m).$ From Claim \ref{claim:12} and \eqref{eq:48:h}, we can conclude that $h = h'$, where $h' = \eta(a_1)\fprod \eta(a_2) \fprod\dots\fprod \eta(a_m)$ is the canonical representation of $h' \in V_i'$.  
In particular, the equality $h = h'$ shows that $h=h'\in V_i'\cap F(X)$.
Since 
$\nice{V}'$ extends $\nice{U}$ by Claim
\ref{extension:claim:1},
$V_i'\cap F(X)=U_i$ by (iii$_{\nice{V}'}^{\nice{U}}$).
This shows that $h\in U_i$.
\end{proof}

\begin{claim} \label{extension:claim}
The \fns\ $\nice{V}$ for $F(Y)$ extends $\nice{U}$.
\end{claim}
\begin{proof}
Since $n \leq n$ and $X \subseteq Y$,  conditions(i$_{\nice{V}}^{\nice{U}} $) and (ii$_{\nice{V}}^{\nice{U}} $) of Definition \ref{SPSb} are both satisfied. 

Let $i\le n$ be arbitrary.
Since $\nice{V}$ is the $B$-enrichment of $\nice{U}$,
$U_i\subseteq V_i$ by equation \eqref{eq:def:Vi} of Definition \ref{def:enrichment}. Since $\nice{U}$ is a \fns\ for $F(X)$, we have
$U_i\subseteq F(X)$ by (1$_{\nice{U}}$).
This shows that 
$U_i\subseteq V_i\cap F(X)$.
The inverse inclusion holds by Claim \ref{Vi:claim}.
Thus,
$V_i\cap F(X)=U_i$.
Since this equation holds for an arbitrary $i\le n$,
condition
(iii$_{\nice{V}}^{\nice{U}} $) also holds.
\end{proof}
The conclusion of our lemma follows 
from Claims \ref{claim:8:k}, \ref{first:claim} and \ref{extension:claim}.
\end{proof}

\section{The partially ordered set and density lemmas}
\label{Sec:6}

\begin{definition}
\label{def:P}
Let $X$ be an infinite set.
\begin{itemize}
\item[(a)] 
We denote by 
$\P$ the set of all triples $p=\ll X^p,n^p,\nice{U}^p\gg$ satisfying the following conditions:
\begin{itemize}
\item[(1$_p$)] $X^p\in[X]^{<\omega}$,
\item[(2$_p$)] $n^p\in\N$,
\item[(3$_p$)] $\nice{U}^p = \{U_i^p:i \leq n^p\}$ is a finite neighbourhood system for $F(X^p)$.
\end{itemize}

\item[(b)] Given triples
$
p=\ll X^p,n^p,\nice{U}^p \gg\in\P
$
and 
$q=\ll X^q,n^q,\nice{U}^q,\gg\in\P,
$ we define
$q\le p$ if and only if 
$\nice{U}^q$ is an extension of $\nice{U}^p$ in the sense of Definition \ref{SPSb}.
\end{itemize}
\end{definition}

The following lemma easily  follows from item (b) of
Definition \ref{def:P}.

\begin{lemma}
The pair $(\P,\le)$ 
is a partially ordered set.
\end{lemma}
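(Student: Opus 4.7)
The plan is straightforward: since the order $\le$ is defined in terms of the ``extension'' relation on \fns s (Definition~\ref{SPSb}), I simply verify reflexivity, antisymmetry, and transitivity by unfolding that definition, which imposes three conditions (i$_{\nice{V}}^{\nice{U}}$)--(iii$_{\nice{V}}^{\nice{U}}$): inclusion of alphabets, inequality of lengths, and agreement of traces. Each of the three order axioms reduces to checking these three conditions.

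For reflexivity, given $p = \ll X^p, n^p, \nice{U}^p\gg$, I would take $Y=X=X^p$, $m=n=n^p$, and observe that $U_i^p \subseteq F(X^p)$ by (1$_{\nice{U}^p}$), so $U_i^p \cap F(X^p) = U_i^p$, giving $p \le p$. For antisymmetry, if $p \le q$ and $q \le p$, then (i$_{\nice{V}}^{\nice{U}}$) forces $X^p \subseteq X^q$ and $X^q \subseteq X^p$, hence $X^p = X^q$; (ii$_{\nice{V}}^{\nice{U}}$) forces $n^p = n^q$; and (iii$_{\nice{V}}^{\nice{U}}$) together with $U_i^p \subseteq F(X^p) = F(X^q)$ yields $U_i^p = U_i^p \cap F(X^q) = U_i^q$ for every $i \le n^p$, so $p = q$.

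For transitivity, suppose $r \le q \le p$. Then $X^p \subseteq X^q \subseteq X^r$ gives (i$_{\nice{U}^r}^{\nice{U}^p}$), and $n^p \le n^q \le n^r$ gives (ii$_{\nice{U}^r}^{\nice{U}^p}$). For (iii$_{\nice{U}^r}^{\nice{U}^p}$), the key computation for $i \le n^p$ is
\begin{equation*}
U_i^r \cap F(X^p) = (U_i^r \cap F(X^q)) \cap F(X^p) = U_i^q \cap F(X^p) = U_i^p,
\end{equation*}
using $F(X^p) \subseteq F(X^q)$ in the first equality, the defining equation of $r \le q$ in the second, and that of $q \le p$ in the third (valid because $i \le n^p \le n^q$).

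There is essentially no obstacle here; the proof is a mechanical unpacking of Definitions~\ref{SPSb} and \ref{def:P}. The only point that requires any attention at all is confirming, in both the antisymmetry and transitivity steps, that the indices at which condition (iii$_{\nice{V}}^{\nice{U}}$) is being applied lie in the correct ranges---in both cases this is immediate from the inequalities coming from (ii$_{\nice{V}}^{\nice{U}}$). The proof could reasonably be left to the reader, as the authors have done with several similar statements earlier in the section.
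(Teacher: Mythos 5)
Your proof is correct and is the natural (essentially the only) way to verify the claim. The paper itself gives no proof, remarking only that the lemma ``easily follows from item (b) of Definition~\ref{def:P}''; your write-up simply fills in that omitted verification. Reflexivity from $U_i^p\subseteq F(X^p)$, antisymmetry from the two-sided inclusions together with condition~(iii), and the three-step intersection computation for transitivity are all exactly what is needed, and your bookkeeping on the index ranges in condition~(iii) is correct.
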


\begin{lemma}
There exists $p\in \P$ such that $X^p\not=\emptyset$. In particular, 
$\P\not=\emptyset$.
\end{lemma}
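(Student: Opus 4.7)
The plan is to exhibit an explicit element $p\in\P$ whose first coordinate is a singleton, so that $X^p\neq\emptyset$ and, in particular, $\P\neq\emptyset$. Since $X$ is infinite, it is certainly non-empty, so I will fix an arbitrary $x_0\in X$ and set $X^p=\{x_0\}$; this satisfies (1$_p$) of Definition~\ref{def:P} and makes $X^p$ non-empty.

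Next, I would pick the smallest admissible height for the neighbourhood system. Since Definition~\ref{SPSa} requires $n\in\N^+$, the minimal choice is $n^p=1$, giving a two-term sequence $\nice{U}^p=\{U_0^p,U_1^p\}$. The simplest candidate is the trivial one: set $U_0^p=U_1^p=\{e\}$. Then conditions (1$_{\nice{U}^p}$), (2$_{\nice{U}^p}$) and (4$_{\nice{U}^p}$) of Definition~\ref{SPSa} all hold immediately, since $\{e\}\subseteq F(X^p)$, $\{e\}^{-1}=\{e\}$, and $e\in U_1^p$.

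The only condition requiring any computation is (3$_{\nice{U}^p}$) for $i=0$, namely
\[
\bigcup_{x\in\dash{X^p}} x\fprod U_1^p\fprod U_1^p\fprod x^{-1}\subseteq U_0^p.
\]
For each $x\in\dash{X^p}$ we have $x\fprod\{e\}\fprod\{e\}\fprod x^{-1}=\{x\fprod x^{-1}\}=\{e\}$, so the left-hand side collapses to $\{e\}=U_0^p$. Hence $\nice{U}^p$ satisfies Definition~\ref{SPSa} and is a \fns\ for $F(X^p)$, verifying (3$_p$). Together with (1$_p$) and (2$_p$), this shows $p=\langle X^p,n^p,\nice{U}^p\rangle\in\P$ with $X^p\neq\emptyset$.

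There is essentially no obstacle here; the only subtle point is honoring the constraint $n\in\N^+$ from Definition~\ref{SPSa}, which forces $n^p\geq1$ and hence the introduction of a second (also trivial) set $U_1^p$. Alternatively, one could observe that once $X^p=\{x_0\}$ is fixed, the sequence $\nice{U}=\{\{e\}\}$ of length $1$ (if permitted) or its extension by Lemma~\ref{trivial-extension:by:e} produces the same conclusion, but the direct verification above is shorter.
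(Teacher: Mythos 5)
Your proof is correct and takes essentially the same approach as the paper: exhibit a trivial triple with a singleton $X^p$ and each $U_i^p=\{e\}$. The one difference is the choice of $n^p$. The paper sets $n^p=0$ with a single set $U_0^p=\{e\}$ and declares conditions (1$_p$)--(3$_p$) satisfied; you instead set $n^p=1$ with $U_0^p=U_1^p=\{e\}$. You are actually being more careful than the paper here: Definition~\ref{SPSa} states $n\in\N^+$, so a one-term sequence $\{U_0^p\}$ is, read literally, not a \fns, and the paper's example with $n^p=0$ is technically at odds with its own definition. (This is almost certainly a harmless slip in the paper --- most likely $\N^+$ in Definition~\ref{SPSa} was meant to be $\N$, since condition (2$_p$) of Definition~\ref{def:P} allows $n^p\in\N$ and Remark~\ref{identity:remark} and the inductive arguments all make sense starting from $n=0$.) Your version sidesteps the issue cleanly, and your verification of (3$_{\nice{U}^p}$) is correct.
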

\begin{proof}
Let $X^p = \{x\}$ be a non-empty singleton set. If we let $n^p = 0$ and $U_0^p = \{e\}$, then
$$p=\ll X^p,n^p,\{U_i^p:i \leq 0\}\gg
=
\ll \{x\},0,\{U_0^p\}\gg$$
clearly satisfies conditions (1$_p$)--(3$_p$).
\end{proof}

\begin{definition}
Let $(\P,\le)$ be a poset.
Recall that a set $D\subseteq \P$ is called:
\begin{itemize}
\item[(i)] 
   {\em dense in $(\P,\le)$\/} provided that for every $p\in \P$ there exists $q\in D$ such that $q\le p$;
\item[(ii)]   
   {\em downward-closed in $(\P,\le)$\/} if $p \in D$, $q \in \P$ and $q \leq p$ imply $q \in D$.
\end{itemize}    
\end{definition}

The relation between these two notions is made apparent by the following straightforward lemma.

\begin{lemma}
\label{downwardclosed:dense:sets}
If $A,B \subseteq \P$ are dense subsets of a poset $(\P,\le)$ and $A$ is downward-closed, then $A \cap B$ is dense in $(\P,\le)$.
\end{lemma}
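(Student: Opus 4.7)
The plan is to exploit the two densities in sequence, using the downward-closure of $A$ to guarantee that the second element selected lands inside $A$ as well. Given an arbitrary $p\in\P$, I would first invoke density of $A$ to obtain some $r\in A$ with $r\le p$. Then I would invoke density of $B$ applied to $r$ to obtain some $q\in B$ with $q\le r$.

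At this point the downward-closure of $A$ finishes the job: since $r\in A$ and $q\le r$, we conclude $q\in A$, so $q\in A\cap B$. Transitivity of $\le$ on the poset $(\P,\le)$ then gives $q\le p$, which is exactly what density of $A\cap B$ requires. Since $p$ was arbitrary, $A\cap B$ is dense in $(\P,\le)$.

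There is essentially no obstacle here; the lemma is a purely formal consequence of the definitions and does not use any specific feature of the poset of \fns s. The only point worth emphasizing is the asymmetry between the two hypotheses: one must pick from $A$ \emph{first} and then refine by $B$, because the downward-closure is available only for $A$. Doing it in the opposite order would need $B$ to be downward-closed, which is not assumed.
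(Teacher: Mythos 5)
Your proof is correct, and it is the standard (essentially forced) argument: get into $A$ first by density, refine into $B$ by density, then use downward-closure of $A$ to keep the second witness inside $A$, and finish by transitivity. The paper itself states this lemma as ``straightforward'' and omits the proof entirely, so there is no alternative route to compare against; your write-up, including the remark about the asymmetry of the hypotheses (why one must select from $A$ before $B$), supplies exactly the argument the authors left to the reader.
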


\begin{lemma} \label{dense:sets}
\begin{itemize}
\item[(i)]For every $n\in\N$, the set $A_n=\{q\in\P: n\le n^q\}$ is dense and downward-closed in $(\P,\le)$.
\item[(ii)]For every $S \in [X]^{< \omega}$, the set $B_S = \{ q \in \P : S \subseteq X^q \}$ is dense and downward-closed in $(\P,\le)$.
\item[(iii)]For every 
$g \in F(X)\setminus\{e\}$, the set $C_g = \{q \in \P: g\in F(X^q)\setminus U^q_{n^q}\}$ is dense in $(\P,\le)$. 
\item[(iv)]For every word $g \in F(X)$ the set $D_g = \{q \in \P: g \in \grp{ \Cyc (U_{n^q}^q ) } \}$ is dense in $(\P,\le)$.
\end{itemize}

\end{lemma}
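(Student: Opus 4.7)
The plan is to dispose of the four items separately, each by a direct appeal to a previously established tool. Parts (i) and (ii) will be handled by padding and alphabet enrichment, respectively. For (i), downward-closure is immediate from condition (ii$_{\nice{U}^q}^{\nice{U}^p}$) of Definition \ref{SPSb}, since $q\le p$ forces $n^p\le n^q$; for density, I would apply Lemma \ref{trivial-extension:by:e} to pad $\nice{U}^p$ with trivial levels $\{e\}$ up to index $n$, producing $q=\ll X^p,n,\nice{V}\gg$. For (ii), downward-closure follows from (i$_{\nice{U}^q}^{\nice{U}^p}$), which gives $X^p\subseteq X^q$; for density, I would set $X^q=X^p\cup S$ and take $\nice{U}^q$ to be the cyclic $(X^q\setminus X^p)$-enrichment of $\nice{U}^p$ in $F(X^q)$, which is a \fns\ for $F(X^q)$ extending $\nice{U}^p$ by Corollaries \ref{cor:2} and \ref{extension:set:b} (the degenerate case $X^p=\emptyset$ is trivial, since every \fns\ over the empty alphabet has $U_i=\{e\}$ at each level).

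For part (iii), the strategy is to combine the two previous parts. Given $g\in F(X)\setminus\{e\}$ and $p\in\P$, I would first use part (ii) to pass to some $p'\le p$ with $\lett(g)\subseteq X^{p'}$, so that $g\in F(X^{p'})$; then I would apply Lemma \ref{trivial-extension:by:e} with $m=n^{p'}+1$ to append a single trivial level $V_m=\{e\}$. The resulting $q\le p'$ will have $U^q_{n^q}=\{e\}$, which does not contain $g$ because $g\ne e$.

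Part (iv) is the crux, and it is exactly where Lemma \ref{main:lemma:assgp} is meant to be deployed. If $g=e$, I would take $q=p$: one has $e\in\Cyc(U^q_{n^q})\subseteq\grp{\Cyc(U^q_{n^q})}$ by Remark \ref{identity:remark}. If $g\ne e$, I would again use part (ii) to pass to $p'\le p$ with $\lett(g)\subseteq X^{p'}$, thereby ensuring $g\in F(X^{p'})$ and $X^{p'}\ne\emptyset$. Since $X$ is infinite, a finite $Y\subseteq X$ with $X^{p'}\subseteq Y$ and $|Y\setminus X^{p'}|>|X^{p'}|\cdot 4^{n^{p'}}$ exists; Lemma \ref{main:lemma:assgp} then produces a \fns\ $\nice{V}=\{V_i:i\le n^{p'}\}$ for $F(Y)$ extending $\nice{U}^{p'}$ and satisfying $g\in\grp{\Cyc(V_{n^{p'}})}$. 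Setting $q=\ll Y,n^{p'},\nice{V}\gg$ then yields $q\le p'\le p$ and $q\in D_g$. The main obstacle—the delicate combinatorics required to enforce $g\in\grp{\Cyc(V_{n^{p'}})}$ while preserving the extension condition $V_i\cap F(X^{p'})=U^{p'}_i$—has already been isolated and dispatched inside Lemma \ref{main:lemma:assgp}, so nothing remains in the present statement but the reduction I have just outlined.
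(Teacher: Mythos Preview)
Your proposal is correct and follows essentially the same route as the paper: Lemma~\ref{trivial-extension:by:e} for parts (i) and (iii), Corollaries~\ref{cor:2} and~\ref{extension:set:b} for part (ii), and Lemma~\ref{main:lemma:assgp} for part (iv), with part (ii) used as a preliminary reduction in (iii) and (iv). You are in fact slightly more careful than the paper in two places: you explicitly address the degenerate case $X^p=\emptyset$ in part (ii), and in part (iv) you separate out $g=e$ so as to guarantee $X^{p'}\ne\emptyset$ before invoking Lemma~\ref{main:lemma:assgp}, whose hypothesis requires a non-empty alphabet.
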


\begin{proof}
(i) Fix $n \in \N$. To prove that $A_n$ is dense in $(\P,\le)$, we consider an arbitrary $p \in \P$. If $n \leq n^p$, the inclusion $p \in A_n$ holds trivially. For this reason, we can assume that $n^p < n$.  By Lemma \ref{trivial-extension:by:e}, there exists a finite neighbourhood system $\nice{V} = \{V_i: i \leq n\}$ for $F(X^p)$ extending $\nice{U}^p$. If we define $X^q = X^p$, $n^q = n$ and $\nice{U}^q = \nice{V}$,
 then 
$q = \ll X^q,n^q,\nice{U}^q\gg \in \P.$
Since $\nice{U}^q$ extends $\nice{U}^p$, 
we have $q \leq p$
by Definition \ref{def:P}(b).
Finally, 
$n = n^q$ implies $q \in A_n$ by the definition of $A_n$, thereby showing that $A_n$ is dense in $(\P,\le)$. 

To check that $A_n$ is downward-closed, consider $p \in A_n$ and $q \in \P$ which satisfy $q \leq p$. 
Since $\nice{U}^q$ extends $\nice{U}^p$ by Definition \ref{def:P}(b), the inequality $n^p \leq n^q$ holds by item
(ii$^{\nice{U}^p}_{\nice{U}^q}$) of Definition 
\ref{SPSb}.
Since $p \in A_n$, we also have $n \leq n^p$. By transitivity, this implies that $n \leq n^q$, and therefore $q \in A_n$ by definition of $A_n$. This shows that $A_n$ is downward-closed.

(ii) Let $S \in [X]^{< \omega}$ and $p \in \P$ be arbitrary. 
Then $X^p\in [X]^{<\omega}$ by (1$_p$),
so
$X^q=X^p\cup S\in [X]^{<\omega}$ holds as well.
Let $n^q = n^p$.
Since $\nice{U}^p$ is a finite neighbourhood system for $F(X^p)$
by (3$_p$), the cyclic $(S \setminus X^p)$-enrichment $\nice{U}^q$ of $\nice{U}^p$ is a \fns\ for $F(X^q)$
by Corollary \ref{cor:2}.
We have shown that
$q = \ll X^q,n^q,\nice{U}^q\gg\in\P$.
Since $\nice{U}^q$ is an extension for $\nice{U}^p$ by Corollary \ref{extension:set:b}, 
we have $q\le p$ by Definition \ref{def:P}(b).
Finally $S\subseteq X^q$ by definition of $X^q$,
so $q\in B_S$ by definition of $B_S$.
This shows that $B_S$ is dense in $(\P,\le)$.

To prove that $B_S$ is downward-closed, 
let $p \in B_S$, $q \in \P$ and $q \leq p$. 
Then $S\subseteq X^p$ by definition of $B_S$.
By condition (i$_q^p$), we have that $X^p \subseteq X^q$, which implies that $S \subseteq X^p \subseteq X^q$. Therefore, $q \in B_S$ by definition, and so $B_S$ is downward-closed.

(iii) Let $g \in F(X)\setminus\{e\}$ 
and $p \in \P$ be arbitrary. Since $g$ is generated by the symbols in $X$, there exists some finite set $X_g \subseteq X$ such that $g \in \grp{X_g}$. Since the set $B_{X_g}$ is dense in $\P$ by (ii), 
we may assume  without loss of generality that $p \in B_{X_g}$. Since $p \in B_{X_g}$, this implies that 
$X_g\subseteq X^p$, so
$g \in F(X^p)$. By Lemma \ref{trivial-extension:by:e}, there exists a finite neighbourhood system $\nice{U}^q = \{U^q_i : i \leq n^p+1\}$ on $X^p$ which extends $\nice{U}^p$ such that $g \not \in U^q_{n^p+1} = \{e\}$. If we define $n^q = n^p+1$ and $X^q = X^p$, then $q = \ll X^q,n^q,\nice{U}^q\gg \in \P$. 
Since $\nice{U}^q$ extends $\nice{U}^p$, we have $q\le p$
by Definition \ref{def:P}(b).

(iv) 
Let $g\in F(X)$ and $p\in \P$. 
Arguing as in the proof (iii), we may find a finite subset $X_g$ of $X$ such that $g\in F(X_g)$. 
By (ii), without loss of generality, we may assume that $p\in B_{X_g}$.
Since the set $X^p \subseteq X$ is finite, we can fix a set $X^q \subseteq X$ containing $X^p$  such that $|X^q\setminus X^p| = k > |X^p| \cdot 4^{n^p}$. 
By Lemma \ref{main:lemma:assgp}, there exists 
a finite neighbourhood system $\nice{U}^q = \{U^q_i: i \leq n^p\}$ on $X^q$ extending $\nice{U}^p$ such that $g \in \grp{\Cyc (U_n^q) }$. If we define $n^q = n^p$, then we have that $q = \ll X^q,n^q,\nice{U}^q\gg \in \P$. 
Since $\nice{U}^q$ extends $\nice{U}^p$, we have $q\le p$
by Definition \ref{def:P}(b).
Since the condition $g \in \grp{\Cyc (U_n^q) }$ is satisfied, we have $q \in D_g$. 
\end{proof}

\section{Proof of Theorem \ref{countable:theorem}}
\label{Sec:7}

We shall need the following folklore lemma an easy proof of which can be found in either \cite[Lemma 9.1]{Countable_SSGP} or \cite[Lemma 14]{Axioms}.

\begin{lemma}
\label{countable:generic:filter}
If $\mathscr{D}$ is an at most countable family of dense subsets of a non-empty poset 
$(\P,\le)$, then there exists an at most countable subset $\F$ of $\P$
such that $(\F,\le)$ is a linearly ordered set and 
$\F\cap D\not=\emptyset$ for every $D\in\mathscr{D}$.
\end{lemma}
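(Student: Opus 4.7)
The plan is a direct recursive construction using dependent choice, which is the standard way to produce a generic-like filter meeting countably many dense sets. First I would dispose of trivial cases: if $\mathscr{D}=\emptyset$, any singleton $\{p\}\subseteq \P$ works (and $\P\neq\emptyset$ by hypothesis); if $\mathscr{D}$ is non-empty but finite, I would enumerate it with repetitions to reduce to the countable case. So assume $\mathscr{D}=\{D_n:n\in\N\}$.

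Next I would build a descending sequence $p_0\geq p_1\geq p_2\geq\cdots$ in $\P$ by recursion. Pick $p_0\in\P$ arbitrarily. Having chosen $p_n$, apply the density of $D_n$ to obtain some $p_{n+1}\in D_n$ with $p_{n+1}\le p_n$. Setting $\F=\{p_n:n\in\N\}$, the set $\F$ is at most countable by construction and satisfies $p_{n+1}\in\F\cap D_n$ for every $n\in\N$, so $\F\cap D\neq\emptyset$ for every $D\in\mathscr{D}$.

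Finally I would verify that $(\F,\le)$ is linearly ordered. By construction $p_{n+1}\le p_n$ for every $n$, and iterated application of transitivity of the partial order $\le$ gives $p_m\le p_n$ whenever $n\le m$. Hence any two elements of $\F$ are comparable.

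There is no genuine obstacle here: the only thing to watch out for is the bookkeeping on finite or empty $\mathscr{D}$, and the use of (countable) dependent choice in selecting the sequence $\{p_n\}$, both of which are entirely routine.
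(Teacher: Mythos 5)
Your proof is correct and is exactly the standard Rasiowa–Sikorski-style recursion that the paper defers to as ``folklore'' (citing \cite[Lemma 9.1]{Countable_SSGP} and \cite[Lemma 14]{Axioms}): enumerate the dense sets, build a descending chain $p_0\ge p_1\ge\cdots$ with $p_{n+1}\in D_n$, and take $\F=\{p_n:n\in\N\}$. The handling of the finite/empty cases and the appeal to transitivity for linearity are exactly what one would expect, so there is nothing to add.
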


Let $X$ be a countably infinite set. 
Since $X$ algebraically generates the free group $F(X)$, the latter is at most countable.

Let $(\P,\le)$ be the poset from Definition \ref{def:P}
which uses the set $X$ as its parameter. 
Clearly,
the family 
\begin{equation}
\label{family:D}
\mathscr{D}=
\{C_g: g\in F(X) \setminus\{e\}\}
\cup
\{A_n \cap D_g :n \in \N, g \in F(X)\}
\cup
\{B_F: F \in [X]^{< \omega} \}
\end{equation}
of subsets of $\P$ is at most countable. 

Let us check that
all members of $\mathscr{D}$ are dense in $(\P,\le)$. 
By Lemma \ref{dense:sets}(iii),
each $C_g$ for $g\in F(X) \setminus\{e\}$ is dense in 
$(\P,\le)$.
Let $n \in \N$ and $g \in F(X)$ be arbitrary.
Since $A_n$ is dense and downward-closed in $(\P,\le)$
by Lemma \ref{dense:sets}(i),
and 
$D_g$ is dense in $(\P,\le)$ by Lemma \ref{dense:sets}(iv), 
by Lemma \ref{downwardclosed:dense:sets} we can conclude that
$A_n\cap D_g$ is dense in $(\P,\le)$. 
Finally, the density in $(\P,\le)$ of each $B_F$ for $F\in[X]^{<\omega}$ follows from Lemma \ref{dense:sets}(ii).

Since we have shown that all members of $\mathscr{D}$ are dense in $\P$, we can apply Lemma \ref{countable:generic:filter} to find a countable
set $\F\subseteq \P$ such that $(\F,\le)$ is a linearly ordered set and 
$\F\cap D\not=\emptyset$ for every $D\in\mathscr{D}$.
For every $n\in\N$,  define
\begin{equation}
\label{def:U_i}
U_n=\bigcup\{U_n^p:p\in\F\text{ and }n\le n^p\}.
\end{equation}

Our nearest goal is to show that the family 
\begin{equation}
\label{eq:fam:B}
\B = \{U_n:n\in\N\}
\end{equation}
is a neighbourhood base at $\e{F}$ 
of a Hausdorff group topology $\mathscr{T}$ on the free group $F(X)$.
The verification of this will be split into a sequence of claims.

\begin{claim}
\label{claim:6}
The equality $\bigcap_{n\in\N} U_n = \{\e{F}\}$ holds.
\end{claim}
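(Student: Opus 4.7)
The plan has two directions. For the inclusion $\{e_F\}\subseteq \bigcap_n U_n$, I need $e_F\in U_n$ for every $n$. By the construction of $\mathscr{D}$ in \eqref{family:D}, the set $A_n\cap D_{e_F}$ belongs to $\mathscr{D}$, so $\F$ meets it; in particular, there exists $p\in\F$ with $n\le n^p$. By Remark~\ref{identity:remark}, $e_F\in U^p_n$, and hence $e_F\in U_n$ by the definition \eqref{def:U_i}.

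The main work lies in the reverse inclusion $\bigcap_n U_n\subseteq\{e_F\}$. I would fix an arbitrary $g\in F(X)\setminus\{e_F\}$ and produce an $n\in\N$ with $g\notin U_n$. Since $C_g\in\mathscr{D}$, one can pick $q\in\F\cap C_g$; by Lemma~\ref{dense:sets}(iii), this gives $g\in F(X^q)\setminus U^q_{n^q}$. The natural candidate is $n=n^q$, and I would argue by contradiction: assume $g\in U_{n^q}$, so that by \eqref{def:U_i} there exists $p\in\F$ with $n^q\le n^p$ and $g\in U^p_{n^q}$.

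The decisive step uses that $(\F,\le)$ is linearly ordered: either $p\le q$ or $q\le p$. In the first case, $\nice{U}^p$ extends $\nice{U}^q$ (Definition~\ref{def:P}(b)), so condition (iii$^{\nice{U}^q}_{\nice{U}^p}$) of Definition~\ref{SPSb} yields $U^p_{n^q}\cap F(X^q)=U^q_{n^q}$; combined with $g\in F(X^q)\cap U^p_{n^q}$, this forces $g\in U^q_{n^q}$, contradicting $q\in C_g$. In the second case, $\nice{U}^q$ extends $\nice{U}^p$, so $n^p\le n^q$; together with $n^q\le n^p$ this gives $n^p=n^q$, and condition (iii$^{\nice{U}^p}_{\nice{U}^q}$) yields $U^q_{n^q}\cap F(X^p)=U^p_{n^p}=U^p_{n^q}$. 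Since $U^p_{n^q}\subseteq F(X^p)$ by (1$_{\nice{U}^p}$), we deduce $g\in U^p_{n^q}\subseteq U^q_{n^q}$, again a contradiction.

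The main conceptual obstacle is keeping track of which neighbourhood system extends which, and to confirm that the trace-equality (iii$^{\cdot}_{\cdot}$) applies at level $n^q$ in both cases; this is exactly why one needs $n^q\le n^p$ in Case~1 (already supplied by the definition of $U_{n^q}$) and the equality $n^p=n^q$ in Case~2 (forced by sandwiching the two inequalities). Once these index-bookkeeping points are settled, the argument closes cleanly, establishing $\bigcap_{n\in\N} U_n=\{e_F\}$.
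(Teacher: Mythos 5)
Your proof is correct and follows essentially the same strategy as the paper: use a member of $\mathscr{D}$ of the form $A_n\cap D_g$ to secure $e_F\in U_n^p$ for the easy inclusion, then for the reverse inclusion combine a $C_g$-condition with the linear ordering of $\F$ and the trace-equality (iii) of Definition~\ref{SPSb}, handling both comparability cases. The only differences are cosmetic (your labels $p$ and $q$ are swapped relative to the paper's), and you are in fact slightly more careful than the paper's wording in invoking $A_n\cap D_{e_F}\in\mathscr{D}$ rather than $A_n\in\mathscr{D}$.
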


\begin{proof}
Let us show that $e \in \bigcap_{n\in\N} U_n$. Take an arbitrary $n \in \N$. Since $A_n  \in \mathscr{D}$ by \eqref{family:D}, the choice of $\F \subseteq \P$ allows us to find some $p \in A_n \cap \F$. 
Then $n\le n^p$ by the definition of $A_n$.
Since $p\in\P$, the  family $\nice{U}^p = \{U_i^p:i \leq n^p\}$ is a finite neighbourhood system for $F(X^p)$ by condition (3$_p$) of Definition \ref{def:P}(a). Therefore, 
we can apply Remark \ref{identity:remark} to conclude that $e\in U_n^p$.
Since $p\in \F$ and $n\le n^p$, we have $U_n^p\subseteq U_n$ by \eqref{def:U_i}.
Since $n \in \N$ was arbitrary, we conclude that $e \in \bigcap_{n \in \N} U_n$.

Suppose that there exists some $g \in \bigcap_{n \in \N} U_n$ with $g\not=e$. Then $C_g \in \mathscr{D}$ by \eqref{family:D}. The choice of $\F$ allows us to find some $p \in C_g \cap \F$. By the definition of $C_g$, this automatically implies that \begin{equation} \label{g:is:out}
g \in F(X^p) \setminus U_{n^p}^p.
\end{equation}
Since $g \in \bigcap_{n \in \N} U_n$, the inclusion $g \in U_{n^p}$ holds. 
By \eqref{def:U_i},
this implies the existence of $q \in \F$ such that $n\le n^q$ and $g \in U_{n^p}^q$. Since $\F$ is linearly ordered, either $q \leq p$ or $p \leq q$. We shall show that both of these two conditions lead to a contradiction.  

If $q \leq p$ holds, then $\nice{U}^q$ is an extension of $\nice{U}^p$ by Definition \ref{def:P}(b), so conditions
(i$_{\nice{U}^q}^{\nice{U}^p}$) and 
(iii$_{\nice{U}^q}^{\nice{U}^p}$) of Definition \ref{SPSb} imply that
$F(X^p)\subseteq F(X^q)$ 
and 
$U_{n^p}^q \cap F(X^p) = U_{n^p}^p$.
Since 
$g$ belongs to the set on the left-hand side of the last equation,
we get
$g \in U_{n^p}^p$. This is a direct contradiction to \eqref{g:is:out}, so this case cannot hold. 

Suppose that $p \leq q$ holds.  Then this time $\nice{U}^p$ is an extension of $\nice{U}^q$ by Definition \ref{def:P}(b),
so
 conditions
(i$_{\nice{U}^p}^{\nice{U}^q}$) and  
   (iii$_{\nice{U}^p}^{\nice{U}^q}$) of Definition \ref{SPSb} imply
    that 
$F(X^q)\subseteq F(X^p)$ 
and         
$
U_{n^p}^p \cap F(X^q) = U_{n^p}^q.
$
Since $g \in U_{n^p}^q$, the last equation implies that $g \in U_{n^p}^p$. Once more, this contradicts \eqref{g:is:out}, so this case cannot hold either. 

The obtained contradiction finishes the proof of our claim.
\end{proof}

\begin{claim}
\label{claim:7}
$U_{n}^{-1} = U_{n}$ and $U_{n+1} \fprod U_{n+1} \subseteq U_{n}$ for every $n \in \N$. 
\end{claim}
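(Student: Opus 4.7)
The plan is to unpack both parts directly from the definition \eqref{def:U_i} of $U_n$ and exploit the linear order on $\F$. Symmetry is immediate: each $U_n^p$ satisfies $(U_n^p)^{-1}=U_n^p$ by condition (2$_{\nice{U}^p}$) of Definition \ref{SPSa}, and inversion distributes over unions, so
$$U_n^{-1}=\bigcup\{(U_n^p)^{-1}:p\in\F,\ n\le n^p\}=\bigcup\{U_n^p:p\in\F,\ n\le n^p\}=U_n.$$

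For the product inclusion, I would take arbitrary $g,h\in U_{n+1}$ and pick $p_1,p_2\in\F$ with $n+1\le n^{p_i}$, $g\in U_{n+1}^{p_1}$ and $h\in U_{n+1}^{p_2}$. Since $(\F,\le)$ is linearly ordered, one of the two comparabilities holds; without loss of generality assume $p_2\le p_1$, so that $\nice{U}^{p_2}$ extends $\nice{U}^{p_1}$. Conditions (i$_{\nice{U}^{p_2}}^{\nice{U}^{p_1}}$) and (iii$_{\nice{U}^{p_2}}^{\nice{U}^{p_1}}$) of Definition \ref{SPSb} then give $F(X^{p_1})\subseteq F(X^{p_2})$ and $U_{n+1}^{p_2}\cap F(X^{p_1})=U_{n+1}^{p_1}$, which forces $g\in U_{n+1}^{p_2}$ (using that $n+1\le n^{p_1}$ so the latter equation is meaningful). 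Hence both $g$ and $h$ lie in the single set $U_{n+1}^{p_2}$.

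Now I would apply condition (3$_{\nice{U}^{p_2}}$) of Definition \ref{SPSa} with the particular choice $x=e$, which lies in $\dash{X^{p_2}}$ by Definition \ref{def:bar}. This yields
$$U_{n+1}^{p_2}\fprod U_{n+1}^{p_2}=e\fprod U_{n+1}^{p_2}\fprod U_{n+1}^{p_2}\fprod e^{-1}\subseteq \bigcup_{x\in\dash{X^{p_2}}}x\fprod U_{n+1}^{p_2}\fprod U_{n+1}^{p_2}\fprod x^{-1}\subseteq U_n^{p_2},$$
so $g\fprod h\in U_n^{p_2}$. Since $n\le n+1\le n^{p_2}$ and $p_2\in\F$, the set $U_n^{p_2}$ appears in the union defining $U_n$ in \eqref{def:U_i}, whence $g\fprod h\in U_n$. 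I do not anticipate any genuine obstacle here: the only point that needs care is the compatibility of the two indices attached to $g$ and $h$, and this is handled entirely by the linearity of $(\F,\le)$, which automatically provides a common \fns\ containing both elements.
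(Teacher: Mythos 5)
Your proof is correct and follows essentially the same path as the paper's: symmetry falls out of condition (2$_{\nice{U}}$) term by term, and for the product you use linearity of $(\F,\le)$ to push both elements into a common $U_{n+1}^{p}$, then apply (3$_{\nice{U}}$) with $x=e$. The only cosmetic difference from the paper is that you take the extending index to be $p_2$ rather than $p_1$.
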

\begin{proof}
Fix $n \in \N$. 

Consider an arbitrary $p \in \F$ satisfying $n \leq n^p$. Then
$U^p_n = (U^p_n)^{-1}$ by
  condition (2$_\nice{U}$) of Definition \ref{SPSa}. If we apply this to \eqref{def:U_i}, we obtain that $U_n = U_n^{-1}$.

For the second inclusion, consider some arbitrary $g_1,g_2 \in U_{n+1}$. By  \eqref{def:U_i}, there exist $p_1,p_2 \in \F$ such that $n + 1 \leq n^{p_j}$ and $g_j \in U_{n+1}^{p_j}$ for $j = 1,2$.  Since $\F$ is linearly ordered, without loss of generality, we may assume that $p_1 \leq p_2$. 
Then $\nice{U}^{p_1}$ extends $\nice{U}^{p_2}$
by Definition \ref{def:P}(b).
Now conditions (i$_{\nice{U}^{p_1}}^{\nice{U}^{p_2}}$)  and (iii$_{\nice{U}^{p_1}}^{\nice{U}^{p_2}}$) of Definition \ref{SPSb} imply
$F(X^{p_2})\subseteq F(X^{p_1})$ and
$
U_{n+1}^{p_1} \cap F(X^{p_2}) = U_{n+1}^{p_2}.
$
From the last equation, we obtain $g_1,g_2 \in U_{n+1}^{p_1}$. 
Note that $\nice{U}^{p_1}$ is a \fns\ for $F(X^{p_1})$ by 
condition (3$_{p_1}$) of 
Definition \ref{def:P}(a). 

Since $e \in \overline{X^{p_1}}$, then by using condition (3$_{ \nice{U}^{p_1}}$) of Definition \ref{SPSa} we obtain that \begin{equation}
\label{eq:56:h}
g_1\fprod g_2\in U^{p_1}_{n+1} \cdot U^{p_1}_{n+1} =
e\fprod U^{p_1}_{n+1} \cdot U^{p_1}_{n+1}\fprod e^{-1}
\subseteq 
\bigcup_{x\in\dash{X}} x\fprod U^{p_1}_{n+1} \cdot U^{p_1}_{n+1}\fprod x^{-1}\subseteq
U^{p_1}_{n}.
\end{equation}
Since $p_1 \in \F$ and $n <n+1\leq n^{p_1}$, we have 
$U^{p_1}_{n}\subseteq U_n$ by \eqref{def:U_i}.
From this inclusion and \eqref{eq:56:h},  
we obtain $g_1 \cdot g_2 \in U_n$. 
Since $g_1, g_2 \in U_{n+1}$ were arbitrary, this establishes 
the inclusion $U_{n+1} \cdot U_{n+1} \subseteq U_n$.
\end{proof}
\begin{claim}
\label{untangle}
For every $n \in \N$ and each $y \in \dash{X}$, the inclusion $y \fprod U_{n+1} \fprod y^{-1} \subseteq U_n$ holds.
\end{claim}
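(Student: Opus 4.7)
The plan is to fix $n \in \N$, $y \in \dash{X}$ and an arbitrary $g \in U_{n+1}$, and then locate a single element $q \in \F$ large enough that both $g$ and $y$ ``live inside'' the finite neighbourhood system $\nice{U}^q$, at which point condition (3$_{\nice{U}^q}$) of Definition~\ref{SPSa} delivers the desired inclusion immediately. By \eqref{def:U_i} there exists $p \in \F$ with $n + 1 \le n^p$ and $g \in U_{n+1}^p$. If $y = e$, this $p$ already suffices, so I may assume $y = x^{\varepsilon}$ for some $x \in X$ and $\varepsilon \in \{-1,1\}$.

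The first key step is to ensure $x \in X^q$ for an appropriate $q$. Since $B_{\{x\}} \in \mathscr{D}$ by \eqref{family:D} and the choice of $\F$ meets every member of $\mathscr{D}$, there is some $r \in B_{\{x\}} \cap \F$, i.e., $r \in \F$ with $x \in X^r$. Since $(\F,\le)$ is linearly ordered, either $p \le r$ or $r \le p$. In the first case, Definition~\ref{def:P}(b) together with (i$_{\nice{U}^p}^{\nice{U}^r}$) of Definition~\ref{SPSb} gives $X^r \subseteq X^p$, so $x \in X^p$ and I set $q = p$. In the second case, (i$_{\nice{U}^r}^{\nice{U}^p}$), (ii$_{\nice{U}^r}^{\nice{U}^p}$) and (iii$_{\nice{U}^r}^{\nice{U}^p}$) give $X^p \subseteq X^r$, $n^p \le n^r$ and $U_{n+1}^r \cap F(X^p) = U_{n+1}^p$; in particular $g \in U_{n+1}^p \subseteq U_{n+1}^r$, so I set $q = r$. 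In either case I obtain $q \in \F$ satisfying $n + 1 \le n^q$, $g \in U_{n+1}^q$ and $y \in \dash{X^q}$.

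Now $\nice{U}^q$ is a finite neighbourhood system for $F(X^q)$ by condition (3$_q$) of Definition~\ref{def:P}, so Remark~\ref{identity:remark} gives $e \in U_{n+1}^q$ and condition (3$_{\nice{U}^q}$) of Definition~\ref{SPSa} gives
$$
y \fprod g \fprod y^{-1} = y \fprod g \fprod e \fprod y^{-1} \in y \fprod U_{n+1}^q \fprod U_{n+1}^q \fprod y^{-1} \subseteq \bigcup_{z \in \dash{X^q}} z \fprod U_{n+1}^q \fprod U_{n+1}^q \fprod z^{-1} \subseteq U_n^q.
$$
Since $n \le n^q$ and $q \in \F$, \eqref{def:U_i} yields $U_n^q \subseteq U_n$, and hence $y \fprod g \fprod y^{-1} \in U_n$. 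As $g \in U_{n+1}$ was arbitrary, this finishes the proof.

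The only mildly subtle point is the bookkeeping around the two cases $p \le r$ versus $r \le p$: one has to remember that $q \le p$ means $\nice{U}^q$ \emph{extends} $\nice{U}^p$, hence the inclusion between the underlying alphabets $X^p, X^q$ goes in the opposite direction, and then apply (iii$_{\nice{V}}^{\nice{U}}$) correctly to transport $g$ between the two neighbourhood systems. Everything else is essentially a direct translation of definitions, entirely analogous to the argument used in Claim~\ref{claim:7} for the product inclusion $U_{n+1}\fprod U_{n+1}\subseteq U_n$.
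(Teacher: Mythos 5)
Your proof is correct and follows essentially the same route as the paper: both use density of $B_{\{x\}}$ to find a condition containing $x$ in its alphabet, linear ordering of $\F$ to pass to a common refinement where $g$, $e$, and $y$ all live in the same finite neighbourhood system, and then apply condition (3$_{\nice{U}}$) of Definition~\ref{SPSa} together with \eqref{def:U_i}. The only difference is cosmetic relabeling of $p$, $q$, $r$, and you isolate the case $y=e$ explicitly (though without spelling out why $p$ then suffices), whereas the paper leaves that degenerate case implicit.
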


\begin{proof}
Let $n \in \N$ and $y \in \dash{X}$ be arbitrary. 
It follows from Definition \ref{def:bar} that
$y=x^\varepsilon$ for some $x\in X$ and $\varepsilon\in\{-1,1\}$.
Since $B_{\{x\}} \in \mathscr{D}$, there exists $p \in B_{\{x\}} \cap \F$ by the choice of $\F$. Let $h \in U_{n+1}$. By \eqref{def:U_i}, there exists some $q \in \F$ such that $n +1 \leq n^q$ and  $h \in U_{n+1}^q$. Since $\F$ is linearly ordered, there exists some $r \in \F$ such that $r \leq p$ and $r \leq q$. 
Then $\nice{U}^r$ extends both $\nice{U}^p$ and $\nice{U}^q$
by Definition \ref{def:P}(b).
Since $\nice{U}^r$ extends $\nice{U}^q$, by
condition (iii$_{\nice{U}^{r}}^{\nice{U}^{q}}$) of Definition \ref{SPSb}, the equality $U_{n+1}^r \cap F(X^q) = U_{n+1}^q$
holds, implying that $h \in U_{n+1}^r$. 
Similarly, by
condition (iii$_{\nice{U}^{r}}^{\nice{U}^{q}}$) of Definition \ref{SPSb},
we have $n+1\le n^q\le n^r$. 
Since $\nice{U}^r$ is a \fns\ for $F(X^r)$ by 
condition (3$_r$) of 
Definition \ref{def:P}(a),
we have
$e \in U_{n+1}^r$ 
by Remark \ref{identity:remark}.
Since $p\in B_{\{x\}}$, we have $x\in X^p$ by definition of $B_{\{x\}}$.
Since $\nice{U}^r$ extends $\nice{U}^p$, we have 
$X^p\subseteq X^r$ by
condition (i$_{\nice{U}^{r}}^{\nice{U}^{p}}$) of Definition \ref{SPSb}.
Therefore, $x\in X^r$, and so 
$y =x^{\varepsilon}\in \overline{X^r}$ by Definition \ref{def:bar}.
Since $\nice{U}^r$ is a \fns\ for $F(X^r)$,
by applying condition (3$_{\nice{U}^r}$) of Definition \ref{SPSa}, we get that 
\begin{equation} 
\label{eq:57:k}
y \cdot h \cdot y^{-1} 
=
y \cdot h \cdot e\cdot y^{-1} 
\subseteq 
y \cdot U_{n+1}^r \cdot U_{n+1}^r \cdot y^{-1} 
\subseteq 
\bigcup_{z\in \bar{Z}}z \cdot U_{n+1}^r \cdot U_{n+1}^r \cdot z^{-1} 
\subseteq U_{n}^r.
\end{equation}
Since $r \in \F$ and $n \leq n^r$, we have 
$U_{n}^r\subseteq U_n$ by \eqref{def:U_i}.
Combining this with \eqref{eq:57:k}, we conclude that
 $y \cdot h \cdot y^{-1} \in U_n$. 
 Since $h \in U_{n+1}$ was chosen arbitrarily, 
 this proves the inclusion
$y \fprod U_{n+1} \fprod y^{-1} \subseteq U_n$.
\end{proof}

\begin{claim}
\label{base:conjugation}
For every $n \in \N$ and each $g \in F(X)$, there exists $k \in \N$ such that $g \fprod U_k \fprod g^{-1} \subseteq U_n$.
\end{claim}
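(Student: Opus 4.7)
The plan is to reduce the claim to an iterated application of Claim~\ref{untangle}. First, if $g=\e{F}$, then $g\fprod U_n\fprod g^{-1}=U_n$, so the choice $k=n$ works trivially. Otherwise, write $g$ in its irreducible form
$$
g=x_1^{\varepsilon_1}\fprod x_2^{\varepsilon_2}\fprod\dots\fprod x_s^{\varepsilon_s}
$$
with $s\ge 1$, each $x_i\in X$ and each $\varepsilon_i\in\{-1,1\}$. Setting $y_i=x_i^{\varepsilon_i}$ we have $y_i\in\dash{X}$ for every $i=1,\dots,s$. I shall show that $k=n+s$ does the job, that is,
$$
g\fprod U_{n+s}\fprod g^{-1}\subseteq U_n.
$$

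The key intermediate fact is the following, which I will prove by finite induction on $j=0,1,\dots,s$:
$$
(y_{s-j+1}\fprod\dots\fprod y_s)\fprod U_{n+s}\fprod (y_{s-j+1}\fprod\dots\fprod y_s)^{-1}\subseteq U_{n+s-j}.
$$
The base case $j=0$ is the trivial inclusion $U_{n+s}\subseteq U_{n+s}$ (the product on the left is the empty product, hence $\e{F}$). For the inductive step, assume the inclusion holds for some $j<s$. Conjugating both sides by $y_{s-j}$ and using associativity of $\fprod$ yields
$$
(y_{s-j}\fprod\dots\fprod y_s)\fprod U_{n+s}\fprod (y_{s-j}\fprod\dots\fprod y_s)^{-1}\subseteq y_{s-j}\fprod U_{n+s-j}\fprod y_{s-j}^{-1}.
$$
Since $y_{s-j}\in\dash{X}$, Claim~\ref{untangle} (with $n+s-j-1$ in place of $n$) gives
$$
y_{s-j}\fprod U_{n+s-j}\fprod y_{s-j}^{-1}\subseteq U_{n+s-j-1},
$$
which completes the induction. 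Taking $j=s$ yields the desired inclusion $g\fprod U_{n+s}\fprod g^{-1}\subseteq U_n$.

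There is no real obstacle here: the hard work is all packaged inside Claim~\ref{untangle}, and the present claim is simply its iteration along a reduced expression for $g$. The only small care needed is the base case $g=\e{F}$ (handled separately above) and ensuring that the one-step conjugation uses $y_i\in\dash{X}$, which is immediate from $y_i=x_i^{\varepsilon_i}$ with $x_i\in X$ and $\varepsilon_i\in\{-1,1\}$.
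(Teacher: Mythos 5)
Your proof is correct and follows essentially the same approach as the paper: write $g$ as a product of $s$ letters from $\dash{X}$, then iterate Claim~\ref{untangle} $s$ times, innermost conjugation first, to get $g\fprod U_{n+s}\fprod g^{-1}\subseteq U_n$. The paper presents the iteration slightly less formally but the argument and choice $k=n+s$ are the same.
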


\begin{proof}
Let $n\in \N$ and $g \in F(X)$ be arbitrary. 
Then $g=y_1\fprod y_2\fprod  \dots\fprod y_m$ for some
$y_1, y_2,\dots, y_m \in \dash{X}$. 
Note that
$$y_m \fprod U_{n+m} \fprod y_m^{-1} \subseteq U_{n+m-1}$$
by Claim \ref{untangle}.
Applying this claim once again,
we get
$$
y_{m-1}\fprod (y_m \fprod U_{n+m} \fprod y_m^{-1}) \fprod y_{m-1}^{-1}
\subseteq y_{m-1}\fprod U_{n+m-1}\fprod y_{m-1}^{-1}
\subseteq U_{n+m-2}.
$$
By inductively applying Claim \ref{untangle} finitely many times,
we obtain the inclusion
$$
g\fprod U_{n+m} \fprod g^{-1}
=
y_1\fprod \dots \fprod y_m\fprod U_{n+m} \fprod y_m^{-1}\fprod\dots\fprod y_1^{-1}
\subseteq U_n.
$$
Therefore, it suffices to let $k = n+m$.
\end{proof}

\begin{claim}
\label{claim:9}
The family 
$\B$ as in \eqref{eq:fam:B}
is a neighbourhood base at $\e{F}$ 
of some Hausdorff group topology $\mathscr{T}$ on $F(X)$.
\end{claim}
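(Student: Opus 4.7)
The plan is to invoke the standard construction theorem for group topologies from a neighbourhood base at the identity: a family $\B$ of subsets of a group $G$ is a neighbourhood base at $e$ of some Hausdorff group topology if and only if (a) $e\in\bigcap\B$, (b) for every $U,V\in\B$ there is $W\in\B$ with $W\subseteq U\cap V$, (c) for every $U\in\B$ there is $V\in\B$ with $V\fprod V\subseteq U$, (d) for every $U\in\B$ there is $V\in\B$ with $V^{-1}\subseteq U$, (e) for every $U\in\B$ and $g\in G$ there is $V\in\B$ with $g\fprod V\fprod g^{-1}\subseteq U$, and (f) $\bigcap\B=\{e\}$. I will verify each of these for $\B=\{U_n:n\in\N\}$ by quoting Claims \ref{claim:6}, \ref{claim:7}, \ref{untangle} and \ref{base:conjugation}.

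First I would observe that $e\in U_n$ for every $n$, which is contained in the proof of Claim \ref{claim:6}; this gives axiom (a). Next, since $U_{n+1}\fprod U_{n+1}\subseteq U_n$ by Claim \ref{claim:7} and $e\in U_{n+1}$, multiplying an arbitrary element of $U_{n+1}$ by $e$ on the right yields the monotonicity relation $U_{n+1}\subseteq U_n$. Consequently the family $\B$ is decreasing, which immediately gives (b) by taking $W=U_{\max(n,m)}$ when $U=U_n$ and $V=U_m$. Axiom (c) is literally the second assertion of Claim \ref{claim:7}, and axiom (d) follows at once from the first assertion of Claim \ref{claim:7} which gives $U_n^{-1}=U_n$, so $V=U_n$ works to show $V^{-1}\subseteq U_n$.

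For (e), the conjugation axiom, I would directly invoke Claim \ref{base:conjugation}: given $n\in\N$ and $g\in F(X)$, that claim supplies an integer $k\in\N$ with $g\fprod U_k\fprod g^{-1}\subseteq U_n$, so $V=U_k$ does the job. Finally, the Hausdorff axiom (f) is exactly Claim \ref{claim:6}. Since all six axioms are verified, the standard construction theorem produces the desired Hausdorff group topology $\mathscr{T}$ on $F(X)$ having $\B$ as a neighbourhood base at $\e{F}$.

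No step poses any real obstacle at this stage; the entire substance of Claim \ref{claim:9} was already extracted into the preceding four claims, and the present proof is essentially a bookkeeping exercise of matching those claims to the axioms of a neighbourhood base for a Hausdorff group topology. The only point worth an explicit remark is the observation that $\B$ is automatically decreasing, which lets axiom (b) be read off Claim \ref{claim:7} without additional work.
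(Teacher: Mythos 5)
Your proposal is correct and takes essentially the same route as the paper: both establish that $\B$ is a decreasing filter base using Claims \ref{claim:6} and \ref{claim:7}, check the same three group-topology axioms (squaring, symmetry, conjugation) via Claims \ref{claim:7} and \ref{base:conjugation}, and then invoke the standard construction theorem for group topologies from a neighbourhood filter base at the identity, with Hausdorffness following from Claim \ref{claim:6}. The only cosmetic discrepancy is that you announce Claim \ref{untangle} in your opening list but never actually use it (its role is already absorbed into Claim \ref{base:conjugation}), which is harmless.
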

\begin{proof}
It easily follows from Claims \ref{claim:6} and \ref{claim:7}
that $U_m\subseteq U_n$ whenever $n,m \in \N$ and $n \leq m$. 
Combined with \eqref{eq:fam:B}, this implies that $\B$ is a filter base. By \eqref{eq:fam:B}, Claim \ref{claim:7} and Claim \ref{base:conjugation}, 
$\B$ has the following three properties.

\begin{itemize}
\item For every $U \in \B$, there exists $V \in \B$ such that $V \fprod V \subseteq U$;
\item For every $U \in \B$, there exists $V \in \B$ such that $V^{-1} \subseteq U$;
\item For every $U \in \B$ and each $g \in F(X)$, there exists $V \in \B$ such that $g \fprod V \fprod g^{-1} \subseteq U$.
\end{itemize}

By \cite[Theorem 3.1.5]{1}, the family 
$$
\mathscr{T} = \{ O \subseteq F(X) : \forall\ g \in O\ \exists\ U \in \B\ (gU \subseteq O) \}
$$ 
is a topology on the free group $F(X)$ making it into a topological group such that the family $\B$ is a neighbourhood base at $\e{F}$ comprised of $\mathscr{T}$-neighborhoods of $\e{F}$. 
It follows from 
Claim \ref{claim:6} and \cite[Theorem 4.1.1]{1} that
$\mathscr{T}$
is Hausdorff. 
\end{proof}

\begin{claim}
The topological group $(F(X),\mathscr{T})$ has the algebraic small subgroup generating property. 
\end{claim}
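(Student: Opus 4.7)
The plan is to leverage the density of $A_n\cap D_g$ in $(\P,\le)$ together with the fact that $\B=\{U_n:n\in\N\}$ is a neighbourhood base at $\e{F}$ in $\mathscr{T}$. Note that $\Cyc$ is monotone with respect to inclusion: if $A\subseteq B$, then $\Cyc(A)\subseteq\Cyc(B)$ by \eqref{eq:Cyc}, and hence $\grp{\Cyc(A)}\subseteq\grp{\Cyc(B)}$. Consequently, once I establish the equality $F(X)=\grp{\Cyc(U_n)}$ for every $n\in\N$, any $\mathscr{T}$-neighbourhood $U$ of $\e{F}$ will contain some $U_n$ and will inherit $F(X)=\grp{\Cyc(U)}$, yielding $\assgp$. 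So it suffices to fix $n\in\N$ and $g\in F(X)$ and show $g\in\grp{\Cyc(U_n)}$.

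Since $A_n\cap D_g\in\mathscr{D}$ by \eqref{family:D}, I select $p\in \F\cap A_n\cap D_g$. Membership $p\in A_n$ gives $n\le n^p$, while $p\in D_g$ provides $g\in\grp{\Cyc(U_{n^p}^p)}$. Although this last containment is understood inside $F(X^p)$, the inclusion $F(X^p)\subseteq F(X)$ makes the cyclic group and the generated subgroup agree in both ambient groups, so the containment persists in $F(X)$.

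The key intermediate step is the inclusion $U_{n^p}^p\subseteq U_n$. First, every \fns\ is nested: taking $x=\e{F}\in\dash{X}$ in axiom (3$_{\nice{U}}$), together with Remark \ref{identity:remark}, I get $U_{i+1}^p\subseteq U_{i+1}^p\fprod U_{i+1}^p\subseteq U_i^p$ for every $i<n^p$, which iterated gives $U_{n^p}^p\subseteq U_n^p$. Combined with $U_n^p\subseteq U_n$ --- which holds by the definition \eqref{def:U_i}, since $p\in\F$ and $n\le n^p$ --- this yields the claim.

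By monotonicity of $\Cyc$ we then obtain $\grp{\Cyc(U_{n^p}^p)}\subseteq\grp{\Cyc(U_n)}$, whence $g\in\grp{\Cyc(U_n)}$. Since $g\in F(X)$ was arbitrary, $F(X)=\grp{\Cyc(U_n)}$, finishing the proof. I do not anticipate any serious obstacle here: all of the hard work was invested in Lemma \ref{main:lemma:assgp}, which underpins the density of $D_g$, and in the construction of the linearly-ordered generic family $\F$. What remains is a short bookkeeping argument that translates poset density into the $\assgp$ property.
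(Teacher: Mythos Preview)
Your argument is correct and follows essentially the same path as the paper: pick $p\in\F\cap A_n\cap D_g$, use $p\in A_n$ to get $n\le n^p$, use the nesting $U_{n^p}^p\subseteq U_n^p$ (from axiom~(3$_{\nice{U}}$) with $x=e$ and Remark~\ref{identity:remark}) together with \eqref{def:U_i} to obtain $U_{n^p}^p\subseteq U_n$, and conclude via monotonicity of $\Cyc$. The paper does exactly this, only phrasing it for an arbitrary neighbourhood $W\supseteq U_n$ from the start and treating the trivial case $g=e$ separately; your reduction to the basic sets $U_n$ and inclusion of $g=e$ in the generic argument are harmless cosmetic differences.
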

\begin{proof}
We are going to check that the topological group $(F(X),\mathscr{T})$ satisfies Definition \ref{def:ASSGP}.

Let 
$W$ be a neighbourhood of $e$ in $(F(X),\mathscr{T})$.
By Claim \ref{claim:9},
there exists $n\in\N$ such that $U_n\subseteq W$. 

Fix 
$g \in F(X)\setminus\{e\}$ and recall that $A_n \cap D_g \in \mathscr{D}$. By the choice of $\F$, there exists some $q \in A_n \cap D_g \cap \F$. Since $q \in A_n$, the inequality $n \leq n^q$ holds by definition of $A_n$, and therefore  \eqref{def:U_i} tells us that \begin{equation}
\label{eq:57:l}
U_{n^q}^q \subseteq U_n^q \subseteq U_n \subseteq W. \end{equation}
Finally, since $q \in D_g$, we get $g \in \grp{\Cyc(U_{n^q}^q)}$. From \eqref{eq:57:l} we obtain that 
\begin{equation}
\label{eq:59:i}
g \in \grp{\Cyc(U_{n^q}^q)} \subseteq \grp{\Cyc(U_{n})} \subseteq \grp{\Cyc(W)}.
\end{equation}
Finally, $\grp{e}=e\in W$ implies that 
$e\in \Cyc(W)\subseteq \grp{\Cyc(W)}$.
Since \eqref{eq:59:i} holds for every $g \in F(X)\setminus\{e\}$,
we have proved that $F(X) \subseteq \grp{\Cyc(W)}$.
Since the converse inclusion clearly holds, 
we proved that $F(X)=\grp{\Cyc(W)}$.
Since this equality holds
  for every neighbourhood $W$ of $e$ in $(F(X),\mathscr{T})$,
we conclude that $(F(X),\mathscr{T})$ has the algebraic small subgroup generating property.
\end{proof}

Since $(F(X),\mathscr{T})$ is Hausdorff and has a countable base at $\e{F}$ by Claim \ref{claim:9}, it is metrizable.
This concludes the proof of Theorem \ref{countable:theorem}.

\section{Proof of Theorem \ref{the:theorem}}
\label{sec:8}

\def\E{\mathbb{E}}
\def\supp{\mathrm{supp}}

Fix an infinite cardinal $\kappa$.
We are going to show that the free group with $\kappa$ many generators admits an $\assgp$ group topology.

Let $H$ be the free group with a countably infinite set of generators equipped with the $\assgp$ group topology constructed in Theorem \ref{countable:theorem}.
Let $\B$ be a countable base of $H$ at $e$ consisting of symmetric neighbourhoods of $e$.

Define 
$\E=[\kappa\times\N]^{<\omega}\setminus\{\emptyset\}$.
For every $E\in \E$
fix an injection $\theta_E: E\to H$ such that $\theta_E(E)$ is an independent subset of $H$. (It suffices to send $E$  injectively into a subset of the infinite set of generators of $H$.)

Since 
$\kappa$ is infinite,
$|\E|=\kappa$ and 
$|[\E]^{<\omega}|=\kappa$. 
Since $H$ and $\B$ are countable,
we can fix a listing
$\{(\mathscr{E}_\alpha, h_\alpha, B_\alpha):\alpha<\kappa\}$
of triples $(\mathscr{E}_\alpha, h_\alpha, B_\alpha)$, where 
$\mathscr{E}_\alpha\in [\E]^{<\omega}\setminus\{\emptyset\}$,
$h_\alpha\in H^{\mathscr{E}_\alpha}$
and
$B_\alpha\in\B$ for every $\alpha<\kappa$,
having the following property:
\smallskip
\begin{itemize}
\item[($\diamond$)]
If $\mathscr{E}\in [\E]^{<\omega}\setminus\{\emptyset\}$,
$h\in H^{\mathscr{E}}$
and
$B\in\B$,
then
$(\mathscr{E}_\gamma,h_\gamma, B_\gamma)=(\mathscr{E}, h, B)$
for some $\gamma<\kappa$.
\end{itemize}
\smallskip

By transfinite induction on $\alpha<\kappa$, we shall define
$x_{\alpha,n}\in H^\E$ for every $n\in\N$ satisfying two properties:

\begin{itemize}
\item[(i$_\alpha$)]
If $(\alpha,n)\in E\in\E$ for some $n\in\N$, then $x_{\alpha,n}(E)=\theta_E(\alpha,n)$.
\item[(ii$_\alpha$)] 
There exist $n_0, k\in\N$ 
such that 
$x_{\alpha,n_0+i}\in\Cyc(W_\alpha)$  for $i=0,\dots,k$ 
and
$h_\alpha(E)=\prod_{i=0}^k x_{\alpha,n_0+i}(E)$ for every 
$E\in \mathscr{E}_\alpha$,
where $W_\alpha=B_\alpha^{\mathscr{E}_\alpha}\times H^{\E\setminus\mathscr{E}_\alpha}$.
\end{itemize}

Suppose that $\alpha<\kappa$ is an ordinal and $x_{\beta,n}\in H^\E$ were already defined for all $\beta<\kappa$ and $n\in\N$
in such a way that properties (i$_\beta$) and (ii$_\beta$) hold.
We shall  define $x_{\alpha,n}\in H^\E$ for every $n\in\N$ satisfying (i$_\alpha$) and (ii$_\alpha$).

Recall that $\mathscr{E}_\alpha\in [\E]^{<\omega}\setminus\{\emptyset\}$, so 
$\mathscr{E}_\alpha$ is a (non-empty) finite subset of $\E=[\kappa\times\N]^{<\omega}\setminus\{\emptyset\}$.
Therefore, $\bigcup \mathscr{E}_\alpha$ is a finite subset of $\kappa\times\N$, so we can fix $n_0\in \N$ such that 
\begin{equation}
\label{eq:not:in:union}
(\alpha,n)\not\in \bigcup \mathscr{E}_\alpha
\text{ whenever }
n\in\N
\text{ and }
n\ge n_0.
\end{equation}

Let $\mathscr{E}_\alpha=\{E_1,\dots,E_l\}$ be a faithfully indexed enumeration, where $E_j\in\E$ for every $j=1,\dots,l$.

Let $j=1,\dots,l$
be arbitrary. Since
$h_\alpha\in H^{\mathscr{E}_\alpha}$, we have
$h_\alpha(E_j)\in H$.
Since $B_\alpha\in\B$, it is an open neighbourhood of $e$ in $H$.
Since $H$ is $\assgp$, we can choose $k_j\in\N^+$
 and $g_0^j,\dots,g_{k_j}^j\in\Cyc(B_\alpha)$
such that
$$
h_\alpha(E_j)=\prod_{i=0}^{k_j} g_i^j.
$$

Define $k=\max\{k_j:j=0,\dots,l\}$.

Let $j=0,\dots,l$ be arbitrary.
For every $i$ with $k_j<i\le k$, define $g_i^j=e$.
Then
\begin{equation}
\label{eq:g_i^j}
g_0^j,\dots,g_{k}^j\in\Cyc(B_\alpha)
\ \ \text{ and }\ \ 
h_\alpha(E_j)=\prod_{i=0}^{k} g_i^j.
\end{equation}

For $n\in\N$ satisfying $n_0\le n\le n_0+k$, define $x_{\alpha,n}\in H^\E$ by
\begin{equation}
\label{triple:x}
x_{\alpha,n}(E)=
\begin{cases}
g_{n-n_0}^j &\text{if } E=E_j\text{ for some } j=1,\dots,l\\
\theta_E(\alpha,n) &\text{if } (\alpha,n)\in E\\
e & \text{otherwise}
 \end{cases}
\hskip30pt
\text{ for } E\in \E.
\end{equation} 
It should be noted that the first and the second line of 
 \eqref{triple:x} do not contradict each other. Indeed,
if $E=E_j$ for some $j=1,\dots,l$, then $E\in\mathscr{E}_\alpha$.
Since $n\ge n_0$, we have $(\alpha,n)\not\in E$ by 
\eqref{eq:not:in:union}.

For $n\in\N$ satisfying either $n<n_0$ or $n> n_0+k$, define $x_{\alpha,n}\in H^\E$ by
\begin{equation}
\label{double:x}
x_{\alpha,n}(E)=
\begin{cases}
\theta_E(\alpha,n) &\text{if } (\alpha,n)\in E\\
e & \text{otherwise}
 \end{cases}
\hskip30pt
\text{ for } E\in \E.
\end{equation}

Condition (i$_\alpha$) is satisfied by \eqref{triple:x} and \eqref{double:x}.

Let is check the condition (ii$_\alpha$). Let $E\in\mathscr{E}_\alpha$ be arbitrary. Then $E=E_j$ for some $j=1,\dots,l$.
It follows from \eqref{triple:x} that
$x_{\alpha,n_0+i}(E)=g_{i}^j$ for all $i=0,\dots,k$, so
\begin{equation}
\label{eq:x_alpha,n_0}
x_{\alpha,n_0+i}(E)\in\Cyc(B_\alpha)
\ \text{ for }\ 
i=0,\dots,k
\end{equation}
by \eqref{eq:g_i^j}
Furthermore,
\eqref{eq:g_i^j} also implies that
$$
h_\alpha(E)=h_\alpha(E_j)=
\prod_{i=0}^k g_i^j
=
\prod_{i=1}^k x_{\alpha, n_0+i}.
$$

Since $W_\alpha=B_\alpha^{\mathscr{E}_\alpha}\times H^{\E\setminus\mathscr{E}_\alpha}$
and \eqref{eq:x_alpha,n_0} holds for every $E\in\mathscr{E}_\alpha$,
we conclude that 
$x_{\alpha,n_0+i}\in \Cyc(W_\alpha)$
for every $i=0,\dots,k$.
This finishes the check of condition  (ii$_\alpha$). 

The inductive construction is complete.

\begin{claim}
\label{claim:X:independent}
$X=\{x_{\alpha,n}:\alpha<\kappa,n\in\N\}$ is an independent subset of $H^\E$.
\end{claim}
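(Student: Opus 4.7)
The plan is to unpack the definition of independence directly from property (i$_\alpha$) without using the inductively constructed coordinates from (ii$_\alpha$) at all; those played their role in arranging the $\assgp$ property and are unnecessary for independence. Recall that a subset $Y$ of a group $G$ is independent precisely when the canonical map from the free group $F(Y)$ into $G$ is injective, i.e.\ no non-trivial reduced word $w=y_1^{\varepsilon_1}\fprod y_2^{\varepsilon_2}\fprod\dots\fprod y_r^{\varepsilon_r}$, with $y_j\in Y$, $\varepsilon_j\in\{-1,1\}$ and $y_j\neq y_{j+1}$ whenever $\varepsilon_j=-\varepsilon_{j+1}$, evaluates to the identity of $G$.

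Accordingly, suppose towards a contradiction that some non-trivial reduced word
$$
w=x_{\alpha_1,n_1}^{\varepsilon_1}\fprod x_{\alpha_2,n_2}^{\varepsilon_2}\fprod\dots\fprod x_{\alpha_r,n_r}^{\varepsilon_r}
$$
in letters of $X\cup X^{-1}$ equals the identity of $H^\E$. I would then form the finite set
$$
E=\{(\alpha_j,n_j):j=1,\dots,r\}\subseteq \kappa\times\N;
$$
it is non-empty, so $E\in\E$. Evaluating the word at the single coordinate $E$ and invoking property (i$_\alpha$), I would obtain
$$
e=w(E)=\prod_{j=1}^{r} x_{\alpha_j,n_j}(E)^{\varepsilon_j}=\prod_{j=1}^{r}\theta_E(\alpha_j,n_j)^{\varepsilon_j}\in H.
$$
Since $\theta_E(E)$ is by hypothesis an independent subset of $H$, I will have a contradiction once I verify that the product on the right-hand side is still a non-trivial reduced word in letters of $\theta_E(E)\cup\theta_E(E)^{-1}$.

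The one place where care is required, and the only real content of the argument, is this last reduction check; the injectivity of $\theta_E$ is precisely what makes it work. Concretely, two consecutive letters $\theta_E(\alpha_j,n_j)^{\varepsilon_j}$ and $\theta_E(\alpha_{j+1},n_{j+1})^{\varepsilon_{j+1}}$ cancel only if $\theta_E(\alpha_j,n_j)=\theta_E(\alpha_{j+1},n_{j+1})$ and $\varepsilon_j=-\varepsilon_{j+1}$; by injectivity of $\theta_E$ the first equality forces $(\alpha_j,n_j)=(\alpha_{j+1},n_{j+1})$, contradicting the assumption that $w$ was reduced in the alphabet $X\cup X^{-1}$. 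Hence the word $w(E)$ is a non-trivial reduced word in an independent subset of $H$, so it cannot be the identity. This contradicts $w(E)=e$ and completes the proof.
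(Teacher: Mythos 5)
Your proposal is correct. Mathematically it rests on the same key observation as the paper's proof: evaluate at the $E$-th coordinate, where property (i$_\alpha$) and the injectivity of $\theta_E$ make the projection $\pi_E$ injective on the relevant letters with independent image $\theta_E(E)$. The difference is one of packaging. The paper organizes this around two cited lemmas from \cite{Memoirs}: Lemma~2.3, which reduces independence of $X$ to independence of each non-empty finite subset $Y$, and Lemma~2.4, which says that a subset $Y$ is independent whenever some homomorphism restricts to an injection on $Y$ with independent image. You instead unpack the definition of independence in terms of non-trivial reduced words, implicitly do the finite reduction (a word involves only finitely many letters), and verify by hand that applying $\pi_E$ to a reduced word in $X\cup X^{-1}$ yields a reduced word in $\theta_E(E)\cup\theta_E(E)^{-1}$ --- precisely the content of the paper's Lemma~2.4 --- via the injectivity of $\theta_E$. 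Both arguments are sound; yours is more elementary and self-contained, while the paper's is shorter because it delegates the combinatorics of reduced words to the cited lemmas.

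One minor point worth flagging in your last paragraph: in principle, cancellation in a free group can also occur between $a^{\varepsilon}$ and $b^{\delta}$ when $b=a^{-1}$ and $\delta=\varepsilon$. This case cannot arise here, since $\theta_E(E)$ is independent and hence $a=b^{-1}$ is impossible for $a,b\in\theta_E(E)$ (for $a\neq b$ it would be a non-trivial relation, and $a=a^{-1}$ would give an element of order $2$ in the free group $H$), but it would be cleaner to say so explicitly rather than silently restricting to the case $a=b$.
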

\begin{proof}
By \cite[Lemma 2.3]{Memoirs}, it suffices to show that every non-empty finite subset $Y$ of $X$ is independent in $H^\E$.
There exists a non-empty finite set $E\subseteq \kappa\times\N$ such that 
$Y=\{x_{\alpha,n}:(\alpha,n)\in E\}$.
Let 
$\eta:E\to Y$ be a surjective map defined by $\eta(\alpha,n)=x_{\alpha,n}$
for 
$(\alpha,n)\in E$.
Note that $E\in\E$ by our definition of $\E$.
Let $\pi_E: H^\E\to H$ be the projection on $E$'th coordinate.
If $(\alpha,n)\in E$, then 
$\pi_E(\eta(\alpha,n))=\pi_E(x_{\alpha,n})=x_{\alpha,n}(E)=\theta_E(\alpha,n)$
by (i$_\alpha$).
This shows that 
$\pi_E\restriction_Y\circ\eta=\theta_E$.
Since $\theta_E$ is an injection, so is $\eta$.
Since $\eta$ is also surjective, $\eta$ is a bijection between 
$E$ and $Y$, so it has the inverse map $\eta^{-1}: Y\to E$.
Now
$\pi_E\restriction_Y\circ\eta=\theta_E$ implies
$\pi_E\restriction_Y=\theta_E\circ \eta^{-1}$.
Since both $\eta^{-1}$ and $\theta_E$ are injections, so is $\pi_E\restriction_Y$.
Finally, it follows from our choice of $\theta_E$ that 
$\pi_E\restriction_Y(Y)=\theta_E\circ \eta^{-1}(Y)=\theta_E(\eta^{-1}(Y))=\theta_E(E)$ is an independent subset of $H$. It follows from \cite[Lemma 2.4]{Memoirs} that $Y$ is an independent subset of $H^\E$.
\end{proof}
\begin{claim}
\label{X:generates:ssgp:group}
The subgroup $G=\grp{X}$ of $H^\E$ generated by $X$ is $\assgp$.
\end{claim}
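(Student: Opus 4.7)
The goal is to show that $G=\grp{\Cyc(U)}$ for every neighbourhood $U$ of $e$ in $G$. Since $G=\grp{X}$, it suffices to prove that each generator $x_{\beta,m}\in X$ belongs to $\grp{\Cyc(U)}$. First I will reduce to basic neighbourhoods: using that $\B$ is a countable base at $e$ in $H$ of symmetric neighbourhoods, every open neighbourhood of $e$ in $G$ contains $G\cap W$ for some set of the form $W=B^{\mathscr{E}}\times H^{\E\setminus\mathscr{E}}$ with $\mathscr{E}\in[\E]^{<\omega}\setminus\{\emptyset\}$ and $B\in\B$, so I may assume from the outset that $U\supseteq G\cap W$ for such a $W$.

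Fix such $W$ and an arbitrary $x_{\beta,m}\in X$. The crucial move is to apply property ($\diamond$) to the triple $(\mathscr{E},x_{\beta,m}\upharpoonright_{\mathscr{E}},B)$: this produces an ordinal $\gamma<\kappa$ with $(\mathscr{E}_\gamma,h_\gamma,B_\gamma)=(\mathscr{E},x_{\beta,m}\upharpoonright_{\mathscr{E}},B)$, and in particular $W_\gamma=W$. Condition (ii$_\gamma$) from the transfinite construction now supplies integers $n_0,k$ and elements $x_{\gamma,n_0},\ldots,x_{\gamma,n_0+k}\in\Cyc(W_\gamma)=\Cyc(W)$ whose coordinatewise product over $\mathscr{E}$ equals $h_\gamma=x_{\beta,m}\upharpoonright_{\mathscr{E}}$.

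Set $y=x_{\gamma,n_0}\fprod x_{\gamma,n_0+1}\fprod\cdots\fprod x_{\gamma,n_0+k}\in G$ and $z=y^{-1}\fprod x_{\beta,m}\in G$, so that $x_{\beta,m}=y\fprod z$. For every $E\in\mathscr{E}$ one has $y(E)=x_{\beta,m}(E)$, hence $z(E)=e$; consequently every power $z^p$ lies in $\{e\}^{\mathscr{E}}\times H^{\E\setminus\mathscr{E}}\subseteq W$, so $\grp{z}\subseteq G\cap W\subseteq U$ and $z\in\Cyc(U)$. The same argument applied to each $x_{\gamma,n_0+i}$, for which $\grp{x_{\gamma,n_0+i}}\subseteq W_\gamma=W$ by (ii$_\gamma$), shows $x_{\gamma,n_0+i}\in\Cyc(U)$. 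Thus $x_{\beta,m}$ is a product of $k+2$ elements of $\Cyc(U)$, so $x_{\beta,m}\in\grp{\Cyc(U)}$, completing the proof.

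The only real subtlety I anticipate is checking that property ($\diamond$) legitimately applies, i.e.\ that the ordinal $\gamma$ may be chosen \emph{after} the whole transfinite induction has been carried out, even though $h_\gamma=x_{\beta,m}\upharpoonright_{\mathscr{E}}$ refers to an element produced by the induction. This is not circular: the listing of triples is fixed in advance and, by ($\diamond$), ranges over every possible triple $(\mathscr{E},h,B)$, so whatever element $x_{\beta,m}\upharpoonright_{\mathscr{E}}$ turns out to be, some index $\gamma$ in the listing will have it as its $h_\gamma$. Everything else is a bookkeeping computation in the product group $H^\E$, together with the observation that the ``correction term'' $z$ lies in $\Cyc(W)$ automatically because its support misses $\mathscr{E}$.
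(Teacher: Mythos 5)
Your proof is correct and follows essentially the same approach as the paper: reduce to a basic cylinder $W=B^{\mathscr{E}}\times H^{\E\setminus\mathscr{E}}$, invoke ($\diamond$) to find a stage $\gamma$ of the transfinite construction whose data match $(\mathscr{E},\cdot,B)$, use (ii$_\gamma$) to produce the factors $x_{\gamma,n_0+i}\in\Cyc(W)$, and absorb the discrepancy outside $\mathscr{E}$ into a single correction term with trivial $\mathscr{E}$-coordinates. The one small simplification you make is to verify membership in $\grp{\Cyc(U)}$ only for the generators $x_{\beta,m}$ rather than for an arbitrary $g\in G$, which is legitimate since $\grp{\Cyc(U)}$ is a subgroup and $G=\grp{X}$; this also lets you dispense with the paper's step of enlarging $\mathscr{E}$ to contain the support of $g$. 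Your parenthetical worry about circularity in applying ($\diamond$) is correctly dispatched for exactly the reason you give: the listing is fixed in advance and is exhaustive, so it covers $(\mathscr{E},x_{\beta,m}\restriction_{\mathscr{E}},B)$ regardless of how $x_{\beta,m}$ was produced.
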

\begin{proof}
Let $U$ be an open neighbourhood of $e$ in $H^\E$.
In order to prove that $G$ is $\assgp$, it suffices to show that the equality $\grp{\Cyc(U\cap G)} = G$ holds.

Let $g \in G$ be arbitrary. By definition there exists $l \in \N$, $\alpha_i \in \kappa$ and $n_i \in \N$ for every $l = 1, \dots, l$ such that
\begin{equation}
g = \prod_{i = 1}^{l} x_{\alpha_i, n_i}^{\varepsilon_i}
\end{equation}
for some $\varepsilon_i = 0,1$ for every $i = 1, \dots, l$.
\end{proof}
Let \begin{equation*}
\mathscr{E}_1 = \{(\alpha_i,n_i): i = 1,\dots,l\}.
\end{equation*}
Now, since $U$ is open in $H^\E$ there exists a non-empty finite subset $\mathscr{E}_2$ of $\E$ and $ B \in \B$ such that $B^{\mathscr{E}_2}\times H^{\E \setminus \mathscr{E}_2} \subseteq U$. Let $\mathscr{E} = \mathscr{E}_1 \cup \mathscr{E}_2$, then
$\mathscr{E} \in [\E]^{<\omega}\setminus\{\emptyset\}$ and define $h=g\restriction_{\mathscr{E}}\in H^\mathscr{E}$.

By ($\diamond$), we can find $\alpha \in \kappa$  such that $( \mathscr{E}, h, B) = (\mathscr{E}_\alpha, h_\alpha, B_\alpha)$. Let $n_0 \in \N$ as in condition (ii$_\alpha$), then there exist $n_0,k \in \N$ such that $x_{\alpha,n_0+i}\in\Cyc(W_\alpha)$  for $i=0,\dots,k$
and
$h_\alpha(E) = h(E) =\prod_{i=0}^k x_{\alpha,n_0+i}(E)$ for every 
$E\in \mathscr{E}$,
where \begin{equation} \label{basic_set_open} W_\alpha=B_\alpha^{\mathscr{E}_\alpha}\times H^{\E\setminus\mathscr{E}_\alpha}=
B^{\mathscr{E}}\times H^{\E\setminus\mathscr{E}}
\subseteq 
B^{\mathscr{E}_2}\times H^{\E\setminus\mathscr{E}_2}
\subseteq U
\end{equation}

In particular, $\Cyc(W_\alpha)\subseteq \Cyc(U)$, which implies
$x_{\alpha,n_0+i}\in\Cyc(U)$  for $i=0,\dots,k$.
Since $x_{\alpha,n_0+i}\in X\subseteq G$, we have
$x_{\alpha,n_0+i}\in \Cyc(U\cap G)$  for $i=0,\dots,k$.

Define \begin{equation} g_1 = \prod_{i=0}^k x_{\alpha,n_0+i} \text{ and } g_2 = g \cdot g_1^{-1}.
\end{equation} 

By definition we have that $g_1,g_2 \in G$ since $G$ is a subgroup of $H^{\mathscr{E}}$. By the construction of $g_1$, we also know that $g_1 \restriction_\mathscr{E} = g \restriction_\mathscr{E}$.

Now, observe that $g_2 \in \Cyc(U)$; indeed, if $E \in \mathscr{E}$ then 
$$g_2(E) = g(E) \cdot (g_1(E))^{-1} = g(E) \cdot g(E)^{-1} = e_H.$$

Since $\{e_H\}^\mathscr{E} \times H^{\E\setminus \mathscr{E}}$ is a subgroup of $H^\mathscr{E}$, then by 
\eqref{basic_set_open} we have that \begin{equation} g_2 \in \{e_H\}^\mathscr{E} \times H^{\E\setminus \mathscr{E}} \subseteq \Cyc(U).
\end{equation}
This shows that $g_2 \in \Cyc(U \cap G)$,  and so \begin{equation} g = g_2 \cdot g_1 = g_2 \cdot \prod_{i=0}^k x_{\alpha,n_0+i}
\end{equation}
where for every $i = 0, \dots, k$ we have that $x_{\alpha,n_0+i}, g_2 \in \Cyc(U \cap G)$.
Thus, $g\in \grp{\Cyc(U\cap G)}$.
Since this holds for an arbitrary $g\in G$, we conclude that
$\grp{\Cyc(U\cap G)} = G$.

It follows from Claim \ref{claim:X:independent} that $G$ is a free group with generating set $X$.
Since $\kappa$ is an infinite cardinal, $|X|=|\kappa\times\N|=\kappa$. 
By Claim \ref{X:generates:ssgp:group}, the subspace topology $G$ inherits from $H^\E$ is $\assgp$.

\section{Questions}
\label{sec:questions}

The free group with one generator is isomorphic to the group $\Z$ of integer numbers, so it does not admit an $\ssgp$ group topology by \cite[Corollary 3.14]{CG},
and therefore it also cannot have an $\assgp$ group topology.
In view of this remark, 
Theorem \ref{the:theorem}
motivates
the following question.

\begin{question}
Let $n \in \N$ with $n \ge 2$. Can the free group with $n$ generators admit either an $\assgp$ or an $\ssgp$ group topology? 
\end{question}

Comparison of Theorems \ref{countable:theorem} and \ref{the:theorem} suggests the following question:

\begin{question}
Can the $\assgp$ group topology in Theorem \ref{the:theorem} be chosen to be metric?
\end{question}

In fact, a more general questions seems quite intriguing.

\begin{question}
If a group $G$ admits an $\ssgp$ group topology, must $G$ also admit a metric $\ssgp$ group topology? What if $G$ is abelian? 
\end{question}

The $\assgp$ version of this question also makes sense.

\begin{question}
If a group $G$ admits an $\assgp$ group topology, must $G$ also admit a metric $\assgp$ group topology? What if $G$ is abelian? 
\end{question}

The following problem may be considered as a ``heir'' of Question
\ref{ques1a}:

\begin{problem}
Describe the algebraic structure of 
 (abelian) groups which admit an $\assgp$ group topology.
\end{problem}

The authors made a substantial progress on this problem 
in \cite{SY-assgp}.

\end{document}